\documentclass[11pt,letterpaper]{article}
\usepackage[latin9]{inputenc}
\usepackage[letterpaper]{geometry}
\geometry{bmargin=1.25in}
\usepackage{amsthm}
\usepackage{amsmath}
\usepackage{amssymb}
\usepackage{esint}

\numberwithin{equation}{section}
\theoremstyle{plain}
\newtheorem{thm}{\protect\theoremname}[section]
  \theoremstyle{plain}
  \newtheorem{lem}[thm]{\protect\lemmaname}
  \theoremstyle{plain}
  \newtheorem{cor}[thm]{\protect\corollaryname}
  \theoremstyle{remark}
  \newtheorem{rem}[thm]{\protect\remarkname}
  \theoremstyle{plain}
  \newtheorem{prop}[thm]{\protect\propositionname}
  \theoremstyle{definition}
  \newtheorem{defn}[thm]{\protect\definitionname}

  \providecommand{\corollaryname}{Corollary}
  \providecommand{\definitionname}{Definition}
  \providecommand{\lemmaname}{Lemma}
  \providecommand{\propositionname}{Proposition}
  \providecommand{\remarkname}{Remark}
\providecommand{\theoremname}{Theorem}

\begin{document}

\title{Complex time evolution in geometric quantization and generalized coherent state transforms}

\author{William D. Kirwin%
\thanks{Mathematics Institute, University of Cologne, Weyertal 86 - 90, 50931
Cologne, Germany.\protect \\
email: will.kirwin@gmail.com%
}, Jos\'e M. Mour\~ao and Jo\~ao P. Nunes%
\thanks{Center for Mathematical Analysis, Geometry and Dynamical Systems and
the Department of Mathematics, Instituto Superior T\'ecnico, Av. Rovisco
Pais, 1049-001 Lisbon, Portugal.\protect \\
email: jmourao@math.ist.utl.pt, jpnunes@math.ist.utl.pt%
}}
\date{}
\maketitle

\begin{abstract}
For the cotangent bundle $T^{*}K$ of a compact Lie group $K$, we study the complex-time evolution of the vertical tangent bundle and the associated geometric quantization Hilbert space $L^{2}(K)$ under an infinite-dimensional family of Hamiltonian flows. For each such flow, we construct a generalized coherent state transform (CST), which is a unitary isomorphism between $L^{2}(K)$ and a certain weighted $L^{2}$-space of holomorphic functions. For a particular set of choices, we show that this isomorphism is naturally decomposed as a product of a Heisenberg-type evolution (for complex time $-\tau$) within $L^{2}(K)$, followed by a polarization--changing geometric quantization evolution (for complex time $+\tau$). In this case, our construction yields the usual generalized Segal--Bargmann transform of Hall. We show that the infinite-dimensional family of Hamiltonian flows can also be understood in terms of Thiemann's ``complexifier'' method (which generalizes the construction of adapted complex structures). We will also investigate some properties of the generalized CSTs, and discuss how their existence can be understood in terms of Mackey's generalization of the Stone-von Neumann theorem.
\end{abstract}

\section{Introduction}

In \cite{Hall02}, Hall initiated the study of the relationship between the coherent-state transform (CST, also known as the generalized Segal--Bargmann transform for compact Lie groups) and geometric quantization. Recall that the geometric quantization (Hilbert space) of a symplectic manifold is the subspace of sections of a certain line bundle on the manifold, called the prequantum line bundle, which are covariantly constant along some choice of polarization\footnote{In geometric quantization, a \emph{polarization} is a complex involutive Lagrangian distribution.}. For example, if the manifold is K\"ahler, one can take the $(1,0)$-tangent bundle for the polarization, and the geometric quantization is the space of square-integrable holomorphic sections of the prequantum line bundle. If the manifold is a cotangent bundle, one can take the complexified vertical tangent bundle for the polarization, and so long as half-forms are included, the geometric quantization is the space of square-integrable functions on the base manifold. The complexification of a compact Lie group $K$ is diffeomorphic to the cotangent bundle of $K$, and as such it has both of these structures. Hall showed that the CST can be understood in terms of geometric quantization as a unitary isomorphism between the vertically polarized Hilbert space $L^2(K)$ and the Hilbert space for the standard K\"ahler polarization.

The problem of choice of polarization is a fundamental issue in geometric quantization. In good cases, one might hope that the quantization is independent of this choice, but it turns out that such a hope is simply too optimistic. For example, each almost complex structure on a symplectic manifold gives rise to an almost-K\"ahler quantum Hilbert space, and one can attempt to compare these Hilbert spaces by forming a Hilbert bundle over the neighborhood of a point in the space of almost complex structures. This bundle has a natural connection (generalizing the connections of Axelrod--Della Pietra--Witten\cite{Axelrod-DellaPietra-Witten} and Hitchin \cite{Hitchin90}) which is given by projecting the trivial connection in the trivial bundle whose fiber is the space of all square-integrable sections of the prequantum line bundle onto the almost-holomorphic subbundle. In the case that the symplectic manifold is a symplectic vector space and one restricts to translation invariant complex structures, this connection is known to be projectively flat \cite{Axelrod-DellaPietra-Witten,Kirwin-Wu06}. On the other hand, if one considers the full family of almost complex structures, Foth and Uribe have shown that the connection is never projectively flat, even semiclassically \cite{Foth-Uribe}. The lesson here is that one can expect projective flatness only for certain restricted families of complex structures.

In general, to obtain the correct quantization one must include half-forms. The Axelrod--Della Pietra--Witten/Hitchin connection generalizes naturally to a connection induced by the pairing defined by the half-forms, which is known as the BKS (Blattner--Kostant--Sternberg) pairing. However, the pairing itself is in general not equal to the parallel transport of the connection it induces and is (in general) not unitary. Examples of nonunitary BKS pairing maps are given by torus-invariant K\"ahler structures on compact symplectic toric manifolds \cite{Baier-Florentino-Mourao-Nunes,kirwin-mourao-nunes10}).

For $T^*K$, Hall shows in \cite{Hall02} that the CST is equal to the BKS pairing map between the vertically polarized Hilbert space and the Hilbert space corresponding to the  K\"ahler polarization, induced by the standard diffeomorphism from $T^*K$ to $K_{\mathbb C}$. A  one-real-parameter family of K\"ahler polarizations, ${\cal P}^{it}, t >0$ (see (\ref{tauvectors}) below), containing the standard  K\"ahler polarization, for $t=1$,  and degenerating to the vertical polarization for $t=0$, was studied in \cite{Florentino-Matias-Mourao-Nunes05,Florentino-Matias-Mourao-Nunes06}, where it has been shown that the  BKS pairing map, between the Hilbert spaces corresponding to $t_1>0$ and $t_2>0$, is equal to the parallel transport of a heat equation type connection similar to the ones considered in \cite{Axelrod-DellaPietra-Witten, Hitchin90, Kirwin-Wu06}. By adding the fiber $L^2(K) \rightarrow \{t=0\}$ to the corresponding Hilbert bundle, the connection extends and then the results of \cite{Hall02} imply that the CST is equal to the parallel transport of the extended connection from $t_1=0$ to $t_2>0$.

In this article, we will introduce an infinite dimensional family of $K\times K$-invariant K\"ahler structures on $T^{*}K$ which includes the one-parameter family of K\"ahler complex structures mentioned above, and we will show that there exist generalized CSTs (which are unitary isomorphisms) between the corresponding K\"ahler quantizations and $L^{2}(K)$, and which intertwine natural actions of $K\times K$.

Although we will first construct the K\"ahler structures by hand, we will see that they can be understood in terms of the ``complexifier'' approach due to Thiemann \cite{Thiemann96}, which in turn is related to adapted complex structures. Let $(M,g)$ be a real-analytic Riemannian manifold, and let $\kappa$ be the norm-squared function on the fibers of $T^{*}M$. It was shown independently but essentially coincidentally by Lempert--Sz\H{o}ke \cite{Lempert-Szoke-91,Szoke-91} and Guillemin--Stenzel \cite{Guillemin-Stenzel-91,Guillemin-Stenzel-92} that there exists a tubular neighborhood of $M$ in $T^{*}M$ which admits a (unique) K\"ahler structure whose K\"ahler $2$-form is the standard cotangent bundle symplectic form and such that the restriction of the K\"ahler metric to $M$ is $g$, multiplication by $-1$ in the fibers is an antiholomorphic involution, and  $\kappa$ is a  K\"ahler potential. It turns out that the standard complex  structure on $K_{\mathbb{C}}$ is the adapted complex structure (which exists globally in this case) associated to the metric induced by a choice of bi-invariant inner product on the Lie  algebra of $K$.

The adapted complex structure can be constructed by analytically continuing the Hamiltonian flow $\Phi_{t}^{\kappa/2}:T^{*}M\mapsto T^{*}M$ of $\kappa/2$, which is just the geodesic flow, to  time $t=i \,  (=\sqrt{-1})$ \cite{Hall-Kirwin}. There are several ways to understand the imaginary-time geodesic flow; for example, the pushforward of the vertical tangent bundle by the time-$t$ geodesic flow yields a family of Lagrangian distributions which can be analytically continued to imaginary time, and when we set $t=i$, we obtain the $(1,0)$-tangent bundle of the adapted complex structure. At the level of functions, it can be shown that holomorphic functions (with respect to the adapted complex structure) are of the form $f\circ\pi\circ\Phi_{i}$, where $\Phi_{t}:T^{*}M\rightarrow T^{*}M$ is the geodesic flow, $f$ is a function on $M \subset T^*M$, admitting analytic continuation, and $\pi:T^{*}M\rightarrow M$ is the natural projection. The function $f\circ\pi\circ\Phi_{i}^{\kappa/2}$ at the point $(x,p)\in T^{*}M$ is to be understood as the analytic continuation of the real family $t\mapsto f\circ\pi\circ\Phi_{t}^{\kappa/2}(x,p)$ to $t=i$. 

One can generalize the adapted complex structure slightly by simply evaluating at time $\tau\in\mathbb{C}$. The analytic continuation will exist for $\tau$ in some neighborhood of the origin (including $i$ if one is considering the tube on which the adapted complex structure exists), and the resulting complex structure is positive, and hence K\"ahler, if and only if $\mathrm{\tau \in \mathbb{C}^{+}},$ i.e. ${\rm Im}(\tau) >0$. Lempert and Sz\H{o}ke have recently studied the $\tau$-(in)dependence of K\"ahler quantization with respect to the time-$\tau,\ \tau\in\mathbb{C}^{+}$, adapted complex structures \cite{Lempert-Szoke10}.

The method Thiemann proposes is a generalization of this ``time-$\tau$'' geodesic flow as follows. Let $h$ be a choice of ``complexifier'' function on $T^{*}M$, and denote the time-$t$ Hamiltonian flow of $h$ by $\Phi_{t}^{h}.$ Thiemann proposes to define a complex structure by declaring that the holomorphic functions are $f\circ\pi\circ\Phi_{\tau}^{h}$, or, equivalently, that the $(1,0)$-tangent bundle is given by the analytic continuation of the time-$t$ pushforward of the vertical tangent bundle by $\Phi_{t}^{h}$ evaluated at $t=\tau$. Of course, convergence issues abound, and there is no reason to expect that for an arbitrary function $h$ such analytic continuations exist. On the other hand, the adapted complex structure shows that if $h=\kappa/2$ is half the norm-squared function of some choice of metric on $M$, then for $\tau=i$ Thiemann's method produces the adapted complex structure on a tubular neighborhood of $M$ in $T^{*}M$.

In \cite{Hall-Kirwin2}, Hall and the first author have shown that for $h=\kappa/2$, Thiemann's method can be generalized by replacing the the canonical cotangent symplectic form $\omega^{T^{*}K}$ by a $2$-form $\omega^{T^{*}K}+\pi^{*}\beta$ which is the canonical $2$-form twisted by a magnetic field on $M$. Again, the time-$\tau$ Hamiltonian flow of $\kappa$ yields K\"ahler structure on a tubular neighborhood of $M$ for $\tau$ in some neighborhood of the origin in $\mathbb{C}^{+}$ (and, clearly, if $\beta=0$, one obtains the usual adapted complex structure). This construction is locally equivalent to using the standard symplectic form and replacing the K\"ahler potential $\kappa$ by half the norm-squared of the ``canonical'' momentum which depends on a choice of local magnetic potential. Hence, these magnetic complex structures are more examples where Thiemann's complexifier method is successful.

As mentioned above, although we will first construct them by hand, we will also show that the infinite dimensional family of K\"ahler structures that we construct on $T^{*}K$ can be understood in terms of Thiemann complex structures for a certain class of complexifier functions $h$, thus providing many more example where Thiemann's method provides convergent results (indeed, the complex structures we construct exist on all of $T^{*}K$, not just a tubular neighborhood of $K$). We will also show that the time-$\tau$ flows which yield our K\"ahler structures can be lifted to the prequantum line bundle, and hence yields maps from the cotangent bundle (i.e. vertical polarization) quantization of $T^{*}K$ to the K\"ahler quantizations associated to our family. In some sense, these maps are the first ``half'' of our generalized CSTs. The second ``half'' arises when we force the maps to be unitary isomorphisms.

More precisely, let $e^{-i\tau\hat{h}}$ denote the action, defined by geometric quantization, of the flow of a complexifier $h$ on sections of the prequantum bundle. It will turn out that if $\mathcal{H}_{0}$ is the cotangent bundle quantization of $T^{*}K$ and $\mathcal{H}_{\tau}$ is the K\"ahler quantization of $T^{*}K$ with respect to the time-$\tau$ flow of the vertical polarization, then
\[
e^{-i\tau\hat{h}}:\mathcal{H}_{0}\rightarrow\mathcal{H}_{\tau}
\]
is a densely defined linear operator which is only unitary for $\tau$ real. Thus, we will look for an endomorphism $E(\tau,h):\mathcal{H}_{0}\rightarrow\mathcal{H}_{0}$ such that the composition
\[
e^{-i\tau\hat{h}}\circ E(\tau,h):\mathcal{H}_{0}\rightarrow\mathcal{H}_{\tau}
\]
intertwines the geometric quantization quantization action of $K\times K$ and is unitary. We will show that for the class of complexifiers that we consider, such endomorphisms $E$ exist.  The usual CST introduced by Hall depends on a real parameter $t$. In terms of Thiemann's complexifier method, the parameter-$t$ CST $U_{it}$ arises from the time-$it$ geodesic flow. In fact, when $h=\kappa/2$ and $\tau=it$, one can take
\[
E(it,\kappa/2)=e^{-t\left(-\frac{1}{2}\Delta+\frac{|\rho|}{2}^{2}\right)},
\]
where $\Delta$ is the (negatively-defined) Laplacian on $K$ and $\rho$ is half the sum of the positive roots, and Hall's CST $U_{it}$ can be written
\[
U_{it}=e^{t\hat{\kappa}/2}\circ e^{-t\left(-\frac{1}{2}\Delta+\frac{|\rho|}{2}^{2}\right)}.
\]
By defining the vertical polarization quantization of $h= \kappa/2$ by\footnote{We will comment on the value of the additive constant in the right-hand side of (\ref{hsch}) in Section \ref{gcsts}.}
\begin{equation}
\label{hsch}
Q(h) = - \frac 12 \Delta + \frac{|\rho|^2}2,
\end{equation}
we see that $U_{it}$ has the form
\[
U_{it} = e^{-i \, (it) \hat h} \circ  e^{-i \, (-it) Q(h)} .
\]
The reason behind this form of the CST, as a composition of $-it$-Heisenberg evolution (with fixed polarization) followed by a $+it$ (polarization changing) geometric quantization time evolution, is the Mackey's generalization of the Stone-von Neumann theorem, as we will discuss briefly in  section \ref{gcsts} and in more detail in  \cite{Kirwin-Mourao-Nunes}.

In this case, the family of transforms $E(it,k/2)$ satisfies the semigroup property
\[
E(it_1,k/2)  \circ  E(it_2,k/2) =  E(i(t_1+t_2),k/2)  .
\]
We will show that the generalized $h$-CSTs do not have this property in general, although we will examine the phenomenon in more detail in future work.

\section{Preliminaries}

Let $K$ be a Lie group of compact type. Denote the Lie algebra of $K$ by $\mathfrak{k}:=T_{1}K$. Let $B$ be a positive definite bi-invariant bilinear form on $\mathfrak{k}$ (e.g., one can take $B$ to be a multiple of the negative of the Killing form). We will choose a specific normalization for $B$ in the second paragraph below. We will permanently identify $\mathfrak{k}\simeq\mathfrak{k}^{*}$ using the map $B:\mathfrak{k}\rightarrow\mathfrak{k}^{*}$ given by $X\mapsto B(X,\cdot).$ Using left translation, we will identify $TK\simeq K\times\mathfrak{k}$ and $T^{\ast}K\simeq K\times\mathfrak{k}^{\ast}\simeq K\times\mathfrak{k}$. Throughout, since $T_{(x,Y)}(T^{\ast}K)\simeq\mathfrak{k}\oplus\mathfrak{k}$, we will write vectors on $T^{\ast}K$ as block column vectors $\begin{pmatrix}X\\Y\end{pmatrix}$, where $X,Y\in\mathfrak{k}$. The canonical $1$-form on $T^{\ast}K$ is
\[
\theta_{(x,Y)}\left(\begin{pmatrix}X\\
V
\end{pmatrix}\right)=B(Y,X).
\]
One may then compute that the canonical symplectic form $\omega^{T^{\ast}K}=-d\theta$ on $T^{\ast}K$ is
\[
\omega_{(x,Y)}^{T^{\ast}K}\left(\begin{pmatrix}X\\
V
\end{pmatrix},\begin{pmatrix}Z\\
W
\end{pmatrix}\right)=B(W,X)-B(V,Z)+B(Y,[X,Z]).
\]
It will be occasionally useful to note that $\omega^{T^{\ast}K}$ can be written in block form as
\begin{equation}
\omega_{(x,Y)}^{T^{\ast}K}=\begin{pmatrix}-ad_{Y} & \mathbf{1}\\
-\mathbf{1} & 0
\end{pmatrix},\label{eqn:symp-block}
\end{equation}
with the convention that the product of a row vector and a column vector is the inner product $B$.

With $n:=\dim K$, let $\left\{ T_{j}\right\} _{j=1,...,n}$ be a $B$-orthonormal basis of the Lie algebra $\mathfrak{k}$ and let $\{X_{j}\}_{j=1,\dots,n}$ be the basis of left-invariant vector fields on $K$ which is equal to $\left\{ T_{j}\right\} $ at the identity. Let $\{y^{j}\}_{j=1,\dots,n}$ be the coordinates on $\mathfrak{k}$ with respect to $\left\{ T_{j}\right\} $, and let $\{w^{j}\}_{j=1,\dots,n}$ be the basis of left-invariant 1-forms on $K$ dual to the vector fields $X_{j}$. We will also denote the pullback to $T^{*}K$ of $w^{j}$ via the canonical projection by $w^{j}$. Let $\left\{ \tilde{X}_{j}\right\}$ be the basis of right-invariant vector fields on $K$ which is equal to $\left\{ T_{j}\right\}$ at the identity, and let $\left\{ \tilde{y}^{j}\right\}$ and $\left\{ \tilde{w}^{j}\right\}$ be the associated coordinates on $\mathfrak{k}$ and right-invariant forms on $K$.

The bilinear form $B$ induces a metric on $K$ whose volume form is a (bi-invariant) Haar measure $dx$ which can be expressed as $dx=w^{1}\wedge\cdots\wedge w^{n}.$ We normalize $B$ so that the resulting volume $\mathrm{vol}K:=\int_{K}dx$ is equal to $1$.

Since $\Theta = \sum_{j=1}^n y^j w^j$, we obtain
\begin{equation}
\omega^{T^{*}K}=\sum_{j=1}^{n}\left(w^{j}\wedge dy^{j}+\frac{1}{2}\sum_{k,l=1}^{n}C_{kl}^{j} \, y^j w^{k}\wedge w^{l}\right),\label{eq:sympform}
\end{equation}
where $C_{kl}^{j}$ denote the (totally antisymmetric) structure constants of $\mathfrak{k}$ in the basis $\{X_j\}_{j=1, \dots ,n}$. We see that the Liouville measure on $T^{*}K$ can be expressed as
\[
dx\, dY,
\]
where $dY=dy^{1}\wedge\cdots\wedge dy^{n}$ is the Lebesgue measure on $\mathfrak{k}$.

Since $K$ is of compact type, it admits a unique complexification $K_{\mathbb{C}}$. Let $\hat{K}$ denote the set of equivalence classes of irreducible representations of $K$ and recall that they are all finite dimensional and unitary. There is a $1$-to-$1$ correspondence between irreducible representations of $K$ and finite-dimensional (non-unitary) irreducible representations of $K_{\mathbb{C}}$. Recall that if $\rho$ is a finite-dimensional irreducible representation of $K_{\mathbb{C}}$, then its restriction to $K$ is the corresponding element of $\hat{K}$. Hence, we denote the set of equivalence classes of irreducible finite-dimensional representations of $K_{\mathbb{C}}$ also by $\hat{K}$. We will not explicitly distinguish between a representation of $K_{\mathbb{C}}$ and its restriction to $K$, although it will be clear from the context.

The diffeomorphism
\begin{eqnarray*}
T^{*}K\cong K\times\mathfrak{k} & \to & K_{\mathbb{C}}\\
(x,Y) & \mapsto & xe^{iY}
\end{eqnarray*}
can be used to pull back the canonical complex structure from $K_{\mathbb{C}}$ to $T^{*}K$, so that $(T^{*}K,\omega)$ becomes a K\"ahler manifold. We refer to this K\"ahler (complex) structure on $T^{*}K$ as the standard K\"ahler (complex) structure on $T^{*}K$.

\section{Geometric quantization of $T^{*}K$}

Let $L\rightarrow T^{*}K$ be a hermitian line bundle with hermitian structure $h^{L}$ and compatible connection $\nabla^{L}$ with curvature $-i\omega^{T^{*}K}$. The line bundle $L$ is called a prequantum bundle for $T^{*}K$. Since $\omega^{T^{*}K}$ is exact, $L$ is globally trivializable, and each choice of symplectic potential induces a trivialization. We will use the canonical $1$-form $\theta$, so that $\nabla^{L}=d+i\Theta$. The geometric prequantization of $(T^{*}K,\omega^{T^{*}K})$ is the space of sections of a prequantum line bundle $L\rightarrow T^{*}K$ which are square integrable with respect to the inner product
\[
\left\langle s,t\right\rangle :=\int_{T^{*}K}h^{L}(s(x,Y),t(x,Y))\, dxdY.
\]

There are two standard ways to proceed from the prequantization of $T^{*}K$ to a quantization of $T^{*}K$. The first is to use the cotangent bundle structure, while the second relies on the diffeomorphism $K_{\mathbb{C}}\simeq T^{*}K$. In the first case, half-forms must be included in order to obtain a nonzero quantum Hilbert space. In the second case, the need for half-forms is less evident. Nonetheless, it is commonly believed that the half-form correction is necessary also in the K\"ahler case and many arguments have been presented in its favor: the half-form correction renders the BKS pairing map unitary in the quantization of vector spaces with translation invariant complex structures \cite{Axelrod-DellaPietra-Witten,Kirwin-Wu06} and of Abelian varieties \cite{Baier-Mourao-Nunes} and allows for a transparent explanation of the vacuum energy shift in the K\"ahler quantization of symplectic toric varieties with toric K\"ahler structures \cite{kirwin-mourao-nunes10}.

A \emph{polarization $\mathcal{P}$} of a symplectic manifold is a complex involutive Lagrangian distribution. The dual $\mathcal{P}^{*}$ is the subbundle of the complexified cotangent bundle consisting of $1$-forms which vanish when restricted to $\overline{\mathcal{P}}$. The canonical bundle $\mathcal{K}^{\mathcal{P}}$of a polarization $\mathcal{P}$ is the top exterior power of $\mathcal{P}^{*}$. The $(1,0)$-tangent bundle of a K\"ahler manifold is a polarization whose canonical bundle is the usual canonical bundle. On the other hand, the complexified vertical tangent bundle on $T^{*}K$ is a polarization, which we will denote by $\mathcal{P}^{0}$, whose canonical bundle $\mathcal{K}^{0}$ is the subbundle of $\bigwedge^{n}(T^{*}K)$ whose sections are $n$-forms which evaluate to zero on $\partial/\partial y^{j},\ j=1,...,n$. Hence,
\[
\Gamma(\mathcal{K}^{0})=C^{\infty}(T^{*}K)\otimes dx.
\]

Let $\mathcal{P}$ be a polarization on $T^{*}K$. Suppose that $\mathcal{K}^{\mathcal{P}}$ admits a square root $\sqrt{\mathcal{K}^{\mathcal{P}}}$ and fix a choice of square root. If $\mathcal{K}^{\mathcal{P}}$ is trivial with global nowhere vanishing section $\Omega$, then $\sqrt{\mathcal{K}^{\mathcal{P}}}$ can also be chosen to be trivial with a trivializing section which squares to $\Omega$ and which we therefore denote by $\sqrt{\Omega}$. In this case, $\Gamma(\sqrt{\mathcal{K}^{\mathcal{P}}})=C^{\infty}(T^*K)\otimes\sqrt{\Omega}$.

The bundle $\sqrt{\mathcal{K}^{\mathcal{P}}}$ is called a half-form bundle, and it comes equipped with a canonical hermitian structure known as the half-form pairing. If $\mathcal{P}$ is the $(1,0)$-tangent bundle of a K\"ahler complex structure on $T^{*}K$, then the half-form pairing is given by comparison to the Liouville form; explicitly, for $\mu,\mu^{\prime}\in\sqrt{\mathcal{K}_{(x,Y)}^{\mathcal{P}}}$, the pairing $(\mu,\mu^{\prime})$ is the unique complex number determined by
\[
\left(\frac{1}{2i}\right)^{n}\bar{\mu}^{2}\wedge\left(\mu^{\prime}\right)^{2}=(\mu,\mu^{\prime})^2 \frac{\left(\omega^{T^{*}K}\right)^{n}}{(-1)^{n(n-1)/2}n!},
\]
where the branch of the square root is chosen  so that $(\mu,\mu)>0$ \cite{Woodhouse}. The constants are chosen so that on $(\mathbb{R}^{2},dx\wedge dy)$ equipped with the standard complex structure $z=x+iy$, one has $(\sqrt{dz},\sqrt{dz})=1$.

The \emph{half-form corrected} \footnote{The half-form correction is also known as the\emph{ metaplectic correction}.} \emph{geometric quantization} $\mathcal{H}_{\mathcal{P}}$ of $(T^{*}K,\omega^{T^{*}K},\mathcal{P})$ is the space of sections of $L\otimes\sqrt{\mathcal{K}^{\mathcal{P}}}$ which are covariantly constant along $\overline{\mathcal{P}}$ and square-integrable with respect to the inner product induced by $h^{L}$ and the half-form pairing on $\sqrt{\mathcal{K}^{\mathcal{P}}}$.

When $\mathcal{P}$ is the $(1,0)$-tangent bundle of the standard complex structure on $T^{*}K\simeq K_{\mathbb{C}}$, the canonical bundle, and hence the half-form bundle, are trivializable and Hall has shown that the quantum Hilbert space, which we will denote by $\mathcal{H}_{i}$ for reasons which will become clear in Section \ref{sec:Time-evolution}, is given by
\begin{multline*}
\mathcal{H}_{i}=\Big\{ F(xe^{iY})e^{-\frac{|Y|^{2}}{2}}\otimes\sqrt{\Omega}:F\text{ is holomorphic,} \\ \text{and}\int_{K\times\mathfrak{k}}\left|F\right|^{2}e^{-\left|Y\right|^{2}}(\sqrt{\Omega},\sqrt{\Omega})dxdY<\infty\Big\}
\end{multline*}
\cite{Hall02} where $\Omega$ is a certain holomorphic $(n,0)$-form on $T^{*}K$ (the wedge product of the $\Omega_{\tau=i}^{j}$'s given in Lemma \ref{lem:frames}).

When $\mathcal{P}$ is the complexification of the vertical tangent bundle of $T^{*}K$, a section which is covariantly constant along $\mathcal{P}$ is determined by its value on the zero section, whence the half-form corrected quantization $\mathcal{H}_{0}$ (again, the reason for the choice of notation will become clear in Section \ref{sec:Time-evolution}) is naturally isomorphic to
\[
\mathcal{H}_{0}=L^{2}(K,dx).
\]

Let $\mathcal{C}$ denote analytic continuation from $K$ to $K_{\mathbb{C}}$ and $\Delta$ the (negatively defined) Laplace operator on $K$. Recall the \textit{coherent state transform} (CST) of Hall \cite{Hall94} 
\begin{eqnarray*}
C_{t}:L^{2}(K,dx) & \to & L_{\mbox{hol}}^{2}(K_{\mathbb{C}},d\nu_{t})\\
f & \mapsto & C_{t}(f)=\mathcal{C}\circ e^{\frac{t}{2}\Delta}f,
\end{eqnarray*}
where $t>0$ and $L_{\mbox{hol}}^{2}(K_{\mathbb{C}},d\nu_{t})$ denotes the space of holomorphic functions on $K_{\mathbb{C}}$ which are $L^{2}$ with respect to the so-called averaged heat kernel measure $d\nu_{t}$ which is proportional to $e^{-\frac{\left|Y\right|^{2}}{t}} \eta(Y) dxdY$ (see \cite{Hall94,Hall02} or \ref{eq:dmu}). Here, $\eta$ is the $Ad$-invariant function on $\mathfrak{k}$ which is determined on a chosen Cartan subalgebra of $\mathfrak{k}_{\mathbb{C}}$ by \cite{Hall97}
\[
\eta(Y)=\prod_{\alpha\in\Delta^{+}}\frac{\mathrm{sinh}\,\alpha(Y)}{\alpha(Y)},
\]
where $\Delta^{+}$ is the associated set of positive roots.

Hall proves:

\begin{thm}
For all $t>0$, $C_{t}$ is a unitary isomorphism of Hilbert spaces.
\end{thm}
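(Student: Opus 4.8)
The plan is to use the Peter--Weyl theorem to reduce the statement to an explicit computation on matrix coefficients, exploiting the fact that both the heat operator $e^{\frac{t}{2}\Delta}$ and the measure $d\nu_{t}$ are diagonalized by representation-theoretic data. Recall that by Peter--Weyl the rescaled matrix coefficients $\{\sqrt{\dim\pi}\,\pi_{ij}\}$, as $\pi$ ranges over $\hat{K}$ and $i,j$ over a basis adapted to the unitary structure, form an orthonormal basis of $L^{2}(K,dx)$. Each $\pi_{ij}$ is an eigenfunction of $\Delta$ with eigenvalue $-c_{\pi}$, where $c_{\pi}=|\lambda_{\pi}+\rho|^{2}-|\rho|^{2}$ is the Casimir eigenvalue of the representation with highest weight $\lambda_{\pi}$. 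Hence $e^{\frac{t}{2}\Delta}\pi_{ij}=e^{-\frac{t}{2}c_{\pi}}\pi_{ij}$, and since the analytic continuation $\mathcal{C}$ sends the matrix coefficient $\pi_{ij}(x)$ to the holomorphic matrix coefficient $\pi_{ij}(g)$ on $K_{\mathbb{C}}$, we obtain the clean formula $C_{t}\pi_{ij}=e^{-\frac{t}{2}c_{\pi}}\,\pi_{ij}(\cdot)$ on $K_{\mathbb{C}}$.

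First I would verify the isometry, together with orthogonality, on these basis elements. Writing $g=xe^{iY}$ and using $\pi(xe^{iY})=\pi(x)e^{i\,d\pi(Y)}$, with $e^{i\,d\pi(Y)}$ a positive Hermitian operator (because $d\pi(Y)$ is skew-Hermitian for $Y\in\mathfrak{k}$), the integration over $x\in K$ is carried out by Schur orthogonality $\int_{K}\pi_{im}(x)\overline{\pi'_{kn}(x)}\,dx=\frac{1}{\dim\pi}\delta_{\pi\pi'}\delta_{ik}\delta_{mn}$. This simultaneously kills all cross terms between inequivalent representations or distinct first indices, and reduces $\int_{K_{\mathbb{C}}}|\pi_{ij}(g)|^{2}\,d\nu_{t}$ to a constant multiple of $\frac{1}{\dim\pi}\int_{\mathfrak{k}}\chi_{\pi}(e^{2iY})\,e^{-|Y|^{2}/t}\,\eta(Y)\,dY$, where $\chi_{\pi}$ is the character.

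The technical heart is then this Gaussian integral over $\mathfrak{k}$. Using the Weyl integration formula to pass to a Cartan subalgebra, the factor $\eta(Y)$ together with the Weyl denominator appearing in the character formula for $\chi_{\pi}(e^{2iY})$ produces a cancellation that collapses the integrand into a finite alternating sum of genuine Gaussians of the form $\int e^{2i(w(\lambda_{\pi}+\rho))(Y)-|Y|^{2}/t}\,dY$. Each such integral contributes a factor $e^{-t|\lambda_{\pi}+\rho|^{2}}$ times a Weyl-group-independent constant, so the whole expression equals $c(t)\,e^{tc_{\pi}}\frac{1}{\dim\pi}$ for an explicit constant $c(t)$ absorbed into the normalization of $d\nu_{t}$. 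Multiplying by $|e^{-\frac{t}{2}c_{\pi}}|^{2}=e^{-tc_{\pi}}$ from the heat factor, the exponentials cancel exactly, leaving $\|C_{t}\pi_{ij}\|_{d\nu_{t}}^{2}=\frac{1}{\dim\pi}=\|\pi_{ij}\|_{L^{2}(K)}^{2}$. This cancellation --- engineered precisely by the choice of $\eta$ and the Gaussian weight --- is the crux of the argument and is where I expect the real work to lie.

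Having shown $C_{t}$ is an isometry on an orthonormal basis, it extends to an isometric embedding of $L^{2}(K,dx)$ into $L^{2}_{\mathrm{hol}}(K_{\mathbb{C}},d\nu_{t})$; here one must check that the analytically continued Peter--Weyl series converges in the holomorphic $L^{2}$-norm, which follows from the rapid decay supplied by the $e^{-\frac{t}{2}c_{\pi}}$ factors. The remaining point is surjectivity, and for this I would show that the orthonormal family $\{\sqrt{\dim\pi}\,C_{t}\pi_{ij}\}$ is complete in $L^{2}_{\mathrm{hol}}(K_{\mathbb{C}},d\nu_{t})$. Since the algebra of holomorphic matrix coefficients is dense in the holomorphic functions on $K_{\mathbb{C}}$ (algebraic Peter--Weyl for $K_{\mathbb{C}}$), any element of the holomorphic $L^{2}$-space orthogonal to every continued matrix coefficient must, by the reproducing-kernel property of $L^{2}_{\mathrm{hol}}$, vanish identically. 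Combining the isometry with completeness of the image yields that $C_{t}$ is a unitary isomorphism for every $t>0$.
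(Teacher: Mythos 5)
This theorem is not proved in the paper at all: it is quoted from Hall's original article \cite{Hall94}, so there is no internal argument to compare against. Your strategy --- Peter--Weyl basis of matrix coefficients, Casimir eigenvalues of $\Delta$, Schur orthogonality over $K$, reduction to a fiber integral of the character against $e^{-|Y|^{2}/t}\eta(Y)\,dY$, cancellation of $\eta$ against the Weyl denominator, and a completeness argument for surjectivity --- is in substance exactly Hall's original proof, so the route is the right one.

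There is, however, one point where your formulas, taken literally, would make the proof collapse. The element $e^{2iY}$ lies in the ``hyperbolic'' part of $K_{\mathbb{C}}$: since $i\,d\pi(Y)$ is Hermitian, the eigenvalues of $\pi(e^{2iY})=e^{2i\,d\pi(Y)}$ are positive real numbers, so on a Cartan subalgebra the Weyl character formula gives
\[
\chi_{\pi}(e^{2iY})=\frac{\sum_{w\in W}\epsilon(w)\,e^{2\langle w(\lambda_{\pi}+\rho),Y\rangle}}{\prod_{\alpha\in\Delta^{+}}2\sinh\alpha(Y)},
\]
with \emph{real} exponents, not the oscillatory exponents $e^{2i\langle w(\lambda_{\pi}+\rho),Y\rangle}$ you wrote. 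Your oscillatory version would produce Gaussian integrals of size $e^{-t|\lambda_{\pi}+\rho|^{2}}$, so the norm of the continued matrix coefficient would \emph{decay} in $\pi$; combined with the heat factor $e^{-tc_{\pi}}$ this contradicts your own (correct) final claim that the fiber integral grows like $e^{+tc_{\pi}}/\dim\pi$, and isometry would fail. The entire mechanism of the CST is that holomorphic matrix coefficients grow like $e^{+t|\lambda_{\pi}+\rho|^{2}}$ in the imaginary directions, exactly compensating the heat-kernel decay; with the real exponents the computation goes through as you intend.

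Two smaller repairs. First, after the $\sinh$ cancellation the integrand is not a sum of pure Gaussians: the Weyl-integration Jacobian $\prod_{\alpha>0}\alpha(Y)^{2}$ combines with $\eta$ and the denominator to leave a residual anti-invariant polynomial factor $\prod_{\alpha>0}\alpha(Y)$. One must complete the square and antisymmetrize over $W$; it is precisely this step that produces the factor $\prod_{\alpha>0}\langle\alpha,\lambda_{\pi}+\rho\rangle\propto\dim\pi$ (Weyl dimension formula) needed for your final answer $c(t)e^{tc_{\pi}}/\dim\pi$ --- it is not a ``Weyl-group-independent constant.'' Second, for surjectivity, density of holomorphic matrix coefficients in the compact-open topology does not by itself transfer to density in $L^{2}_{\mathrm{hol}}(K_{\mathbb{C}},d\nu_{t})$; you need either Hall's expansion of an arbitrary $F\in L^{2}_{\mathrm{hol}}$ as a series of matrix coefficients (using pointwise bounds from the reproducing property to get locally uniform convergence), or the $K\times K$-isotypic decomposition of $L^{2}_{\mathrm{hol}}(K_{\mathbb{C}},d\nu_{t})$ together with the observation that each holomorphic isotypic component consists exactly of the continued matrix coefficients.
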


In \cite{Hall02,Florentino-Matias-Mourao-Nunes05,Florentino-Matias-Mourao-Nunes06}, it was shown that the CST can be understood in terms of the geometric quantizations of $T^{*}K$ arising from the vertical and K\"ahler polarizations. Indeed, one has the isomorphism of Hilbert spaces
\begin{eqnarray*}
\mathcal{H}_{i} & \cong & L_{\mbox{hol}}^{2}(K_{\mathbb{C}},d\nu_{1})\\
F(xe^{iY})e^{-\frac{|Y|^{2}}{2}}\otimes\sqrt{\Omega} & \mapsto & F(xe^{iY}).
\end{eqnarray*}
Hence, one can study the BKS pairing map between $L^{2}(K,dx)$ and $L_{\mbox{hol}}^{2}(K_{\mathbb{C}},d\nu_{t}$. Hall computed this BKS pairing, with the following result.

\begin{thm}
The BKS pairing map $\mathcal{H}_{0}\to\mathcal{H}_{i}$ coincides with the CST.
\end{thm}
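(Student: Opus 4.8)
The plan is to compute the BKS pairing map directly and then identify it with $C_{1}$, using $K\times K$-equivariance to organize the bookkeeping. First I would fix representatives. An element of $\mathcal{H}_{0}$ is $f\otimes\sqrt{dx}$ with $f\in L^{2}(K)$: in the $\theta$-trivialization the section of $L$ that is covariantly constant along $\mathcal{P}^{0}$ is simply $(x,Y)\mapsto f(x)$, since $\Theta$ has no $dy^{j}$-component and so covariant constancy along the fibers reduces to $Y$-independence. An element of $\mathcal{H}_{i}$ is $F(xe^{iY})e^{-|Y|^{2}/2}\otimes\sqrt{\Omega}$ with $F$ holomorphic. The BKS pairing of these two is the integral over $T^{*}K$ of $h^{L}$ applied to the prequantum factors, weighted by the half-form pairing $(\sqrt{dx},\sqrt{\Omega})$ and the Liouville measure, namely $\int_{K\times\mathfrak{k}}\overline{f(x)}\,F(xe^{iY})\,e^{-|Y|^{2}/2}\,(\sqrt{dx},\sqrt{\Omega})\,dx\,dY$. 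The pairing map $B:\mathcal{H}_{0}\to\mathcal{H}_{i}$ is then characterized, via the inner product on $\mathcal{H}_{i}$, by $\langle Bf,s'\rangle_{\mathcal{H}_{i}}=\langle f\otimes\sqrt{dx},s'\rangle_{\mathrm{BKS}}$ for all $s'\in\mathcal{H}_{i}$. Both $B$ and $C_{1}$ intertwine the $K\times K$-actions, the former because the vertical and K\"ahler polarizations and the pairing are all $K\times K$-invariant.

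The crux is the pointwise evaluation of the half-form pairing $(\sqrt{dx},\sqrt{\Omega})$, which I would read off from the defining relation $\left(\tfrac{1}{2i}\right)^{n}\,\overline{dx}\wedge\Omega=(\sqrt{dx},\sqrt{\Omega})^{2}\,(\omega^{T^{*}K})^{n}/\bigl((-1)^{n(n-1)/2}n!\bigr)$, using the explicit holomorphic frame $\Omega=\Omega^{1}_{\tau=i}\wedge\cdots\wedge\Omega^{n}_{\tau=i}$ from Lemma \ref{lem:frames} and $\overline{dx}=dx$. Since $dx=w^{1}\wedge\cdots\wedge w^{n}$ already contains every $w^{j}$, only the $dy^{j}$-parts of the $\Omega^{j}_{\tau=i}$ survive in $dx\wedge\Omega$, so $dx\wedge\Omega$ is a $Y$-dependent multiple of $dx\,dY$; comparing this and $(\omega^{T^{*}K})^{n}$ to the Liouville form $dx\,dY$ produces a Jacobian that I expect to be precisely a power of the $\mathrm{Ad}$-invariant density $\eta(Y)$ arising from the nonabelian corrections in $\Omega^{j}_{\tau=i}$ (in the abelian case the pairing is constant and $\eta\equiv1$). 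Substituting back, the requirement $\langle Bf,s'\rangle_{\mathcal{H}_{i}}=\langle f\otimes\sqrt{dx},s'\rangle_{\mathrm{BKS}}$ for all holomorphic $F$, together with the fact that the $\mathcal{H}_{i}$-norm carries the weight $e^{-|Y|^{2}}(\sqrt{\Omega},\sqrt{\Omega})$, forces $Bf$ to be the fiberwise integral transform whose kernel is built from the Gaussian factor $e^{-|Y|^{2}/2}$ and the $\eta$-factor; concretely $Bf$ should be the holomorphic function $xe^{iY}\mapsto\int_{K}f(x')\,K_{1}(x',xe^{iY})\,dx'$ for a kernel $K_{1}$ to be identified.

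Finally I would identify this transform with $C_{1}=\mathcal{C}\circ e^{\Delta/2}$. The cleanest route exploits equivariance: $L^{2}(K)=\bigoplus_{\pi\in\hat{K}}V_{\pi}\otimes V_{\pi}^{*}$ is the Peter--Weyl decomposition into $K\times K$-irreducibles, and $\mathcal{H}_{i}$ decomposes the same way via analytic continuation of matrix coefficients, so by Schur's lemma each of $B$ and $C_{1}$ acts as a single scalar $\lambda_{\pi}$ on the $\pi$-block and it suffices to check $\lambda_{\pi}^{B}=\lambda_{\pi}^{C_{1}}$. For $C_{1}$ this scalar is $e^{-c_{\pi}/2}$, where $-c_{\pi}$ is the eigenvalue of $\Delta$ on the matrix coefficients of $\pi$. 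For $B$, choosing $f=\pi_{ij}$ and $s'$ the corresponding block of $\mathcal{H}_{i}$ and using $\pi(xe^{iY})=\pi(x)e^{i\,d\pi(Y)}$, the scalar $\lambda_{\pi}^{B}$ becomes a ratio of fiber integrals over $\mathfrak{k}$ of the form $\int_{\mathfrak{k}}e^{i\,d\pi(Y)}e^{-|Y|^{2}/2}\eta(Y)\,dY$, which reduce by $\mathrm{Ad}$-invariance to scalars that the standard Gaussian/Harish--Chandra computation evaluates to the required $e^{-c_{\pi}/2}$; the overall normalization uses $\mathrm{vol}\,K=1$ and the shift by $|\rho|^{2}$ encoded in $\eta$. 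The main obstacle is exactly this analytic identification of the fiberwise Gaussian integral with the heat semigroup $e^{\Delta/2}$ --- it is where $\eta$, the $|\rho|^{2}$-shift, and the Weyl integration formula enter --- together with the technical points of convergence and of the density of matrix coefficients in both Hilbert spaces, which are needed to pass from equality of pairings to equality of operators.
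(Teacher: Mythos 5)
This theorem is not proved in the paper at all: it is quoted as Hall's result (``Hall computed this BKS pairing, with the following result'', citing \cite{Hall02}), so your attempt can only be measured against Hall's own argument --- and your outline is essentially a reconstruction of it. Realizing $\mathcal{H}_{0}$ as $Y$-independent functions in the $\theta$-trivialization, writing the BKS pairing as a fiber integral against $F(xe^{iY})e^{-|Y|^{2}/2}\otimes\sqrt{\Omega}$, computing the cross-polarization half-form pairing by wedging $dx$ against the frame of Lemma \ref{lem:frames}, and then using $K\times K$-equivariance plus Schur's lemma to reduce both the pairing map and $C_{1}$ to one scalar per Peter--Weyl block, identified through a Gaussian integral over $\mathfrak{k}$: this is exactly how the cited proof goes, so the strategy is sound.

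Two caveats, both located in the computations you defer, which is where all the content of the theorem actually lives. First, the Jacobian from $dx\wedge\Omega$ is not a power of $\eta(Y)$: the nonzero eigenvalues of $(\mathbf{1}-e^{-i\,\mathrm{ad}_{Y}})/\mathrm{ad}_{Y}$ pair off into $-4\sinh^{2}(\alpha(Y)/2)/\alpha(Y)^{2}$, so the cross-pairing density is $\eta(Y/2)$ (hyperbolic sine at \emph{half} argument), whereas the diagonal factor $(\sqrt{\Omega},\sqrt{\Omega})$ entering the $\mathcal{H}_{i}$-norm is $\eta(Y)$ by Lemma \ref{lem:BKSnorm}; this mismatch is what places your two fiber integrals at different ``times'' ($t=1/2$ versus $t=1$) of the identity $(\pi t)^{-n/2}\int_{\mathfrak{k}}\pi(e^{2iY})e^{-|Y|^{2}/t}\eta(Y)\,dY=e^{t(c_{\pi}+|\rho|^{2})}\,\mathrm{id}$. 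Second, and more seriously: the equivariance-plus-Schur argument only yields $\lambda_{\pi}^{B}=C^{\prime}e^{-c_{\pi}/2}$ for a $\pi$-independent constant $C^{\prime}$, since the Harish--Chandra-type identity produces $e^{-(c_{\pi}+|\rho|^{2})/2}$-shaped scalars in both numerator and denominator of your ratio. The theorem asserts \emph{equality}, i.e. $C^{\prime}=1$, and that is not bookkeeping --- a constant multiple of a unitary is not unitary --- it is the crux. Pinning $C^{\prime}=1$ requires tracking every normalization: the constant hidden in ``$\nu_{1}$ proportional to $e^{-|Y|^{2}}\eta(Y)\,dx\,dY$'' (which contains a factor $e^{-|\rho|^{2}}$), the $(1/2i)^{n}$ convention in the half-form pairing, and $\mathrm{vol}\,K=1$. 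The exact cancellation of the $|\rho|^{2}$-shifts between the half-form factor and the heat-kernel measure is precisely Hall's point: it is the half-form correction that makes the pairing map equal to $C_{1}$ (hence unitary), rather than merely proportional to it. So: right strategy, identical in structure to the cited proof, but the final normalization you treat as a closing check is the theorem.
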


In \cite{Florentino-Matias-Mourao-Nunes05,Florentino-Matias-Mourao-Nunes06}, the authors considered a one-real-parameter family of polarizations connecting the vertical polarization with the K\"ahler polarization on $T^{*}K$. The corresponding BKS pairing maps were shown to be unitary. Degenerating one of the K\"ahler polarizations to the vertical polarization, one recovers Hall's result relating the BKS pairing with the CST. This family of polarizations gives then a family of quantizations forming a Hilbert space bundle with continuous hermitian structure over $\mathbb{R}_{\geq0}$.

\section{Families of K\"ahler structures on $T^{*}K$}

In this section, we will describe an infinite-dimensional family of K\"ahler structures on $T^{\ast}K$ that are compatible with the canonical symplectic structure. This family contains the one-parameter deformations of the standard K\"ahler structure that are considered in \cite{Florentino-Matias-Mourao-Nunes05,Florentino-Matias-Mourao-Nunes05,Hall-Kirwin,Lempert-Szoke10}. The K\"ahler structures will be constructed using a certain class of functions $h:T^{*}K\rightarrow\mathbb{R}$ which generalize the standard Hamiltonian of a free particle moving on $K$ (i.e. half of the norm of the momentum squared).

In the next section, we will study this family of K\"ahler structures at the level of their $(1,0)$-tangent bundle and in terms of the associated holomorphic functions, and in particular we will see that these K\"ahler structures arise as the ``time-$\tau$'' flow generated by  the associated function $h$ as per the method of Thiemann \cite{Thiemann96}. As discussed in the introduction, we will refer to $h$ as a (Thiemann) \emph{complexifier }function. The standard complex structure on $T^{*}K$ and the one-parameter family of deformations of it which are studied in \cite{Florentino-Matias-Mourao-Nunes05,Florentino-Matias-Mourao-Nunes05,Hall-Kirwin,Lempert-Szoke10} are all associated to the ``kinetic energy'' complexifier $\frac{1}{2}\kappa(x,Y)=\frac{1}{2}B(Y,Y)=\frac{1}{2}\left|Y\right|^{2}.$

\bigskip
We consider the family of complexifier functions $h:T^{*}K\cong K\times\mathfrak{k}\to\mathbb{R}$ such that
\begin{align}
\mbox{1.}\ \  & h(x,Y)\mbox{ is an }Ad\mbox{-invariant smooth function depending only on }Y\in\mathfrak{k},\nonumber \\
\mbox{2.}\ \  & \mbox{the Hessian }H(Y)\mbox{ of }h\mbox{ is positive definite at every point }Y\in\mathfrak{k},\mbox{ and }\label{eq:3props}\\
\mbox{3.}\ \  & \mbox{the operator norm }||H(Y)||\mbox{ has nonzero lower bound.}\nonumber
\end{align}
We henceforth fix a choice of such complexifier $h$.

Let $u(Y)\in\mathfrak{k}$ be the $B$-gradient of the function $h$ at $Y$
\[
u(Y):=B^{-1}(dh).
\]
In local coordinates, $u=\sum_{j=1}^n u^{j}T_{j}$ with $u^{j}=\partial h/\partial y^{j}$. Let $\alpha$ be the induced map of $T^{*}K$ to itself given by
\begin{equation}
\begin{array}{rcl}
\alpha:T^{*}K & \to & T^{*}K\\
(x,Y) & \mapsto & (x,u(Y)).
\end{array}
\end{equation}

\begin{lem}
The map $\alpha$ is a diffeomorphism of $T^{*}K$.
\end{lem}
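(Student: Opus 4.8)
The plan is to reduce the statement to a property of the fibre map and then invoke a global inverse function argument. Since $\alpha(x,Y)=(x,u(Y))$ acts as the identity in the $K$-factor and transforms only the fibre coordinate, $\alpha$ is a diffeomorphism of $T^{*}K\cong K\times\mathfrak{k}$ if and only if the $B$-gradient $u:\mathfrak{k}\to\mathfrak{k}$, $Y\mapsto B^{-1}(dh)$, is a diffeomorphism of the vector space $\mathfrak{k}$; a smooth inverse of $\alpha$ is then furnished by $(x,Y)\mapsto(x,u^{-1}(Y))$. So everything comes down to showing that $u=\nabla h$ is a diffeomorphism of $\mathfrak{k}\cong\mathbb{R}^{n}$.

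First I would observe that $u$ is a local diffeomorphism. In the $B$-orthonormal coordinates $\{y^{j}\}$ one has $u^{j}=\partial h/\partial y^{j}$, so the differential of $u$ at $Y$ is exactly the Hessian $H(Y)$ of $h$, which by hypothesis (2) is positive definite and in particular invertible at every $Y$; the inverse function theorem then makes $u$ a local diffeomorphism. Next, hypothesis (2) says that $h$ is strictly convex on the convex domain $\mathfrak{k}$, which gives injectivity of $u$: for $Y_{1}\neq Y_{2}$,
\[
B\big(u(Y_{1})-u(Y_{2}),\,Y_{1}-Y_{2}\big)=\int_{0}^{1}B\big(H(Y_{2}+s(Y_{1}-Y_{2}))(Y_{1}-Y_{2}),\,Y_{1}-Y_{2}\big)\,ds>0,
\]
so $u(Y_{1})\neq u(Y_{2})$.

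The remaining, and main, step is surjectivity, and this is where hypothesis (3) enters. I read it as providing a uniform positive lower bound for the Hessian, $H(Y)\ge\delta\,\mathbf{1}$ for some $\delta>0$, which makes $u$ coercive: integrating along the segment from $0$ to $Y$ yields $B\big(u(Y)-u(0),\,Y\big)\ge\delta\,|Y|^{2}$, whence $|u(Y)|\to\infty$ as $|Y|\to\infty$ and $u$ is proper. I expect this coercivity estimate to be the crux of the argument, since it is precisely what rules out the image of the gradient being a proper (open, convex) subset of $\mathfrak{k}$. Having established that $u$ is a proper local diffeomorphism, I would conclude by a connectedness argument: the image $u(\mathfrak{k})$ is open (local diffeomorphism) and closed (a proper map into a locally compact Hausdorff space is closed) and nonempty, hence equals the connected space $\mathfrak{k}$. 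Thus $u$ is a bijective local diffeomorphism, i.e.\ a diffeomorphism, and therefore so is $\alpha$; equivalently one may cite the Hadamard--Caccioppoli global inverse function theorem directly. The $Ad$-invariance of hypothesis (1) plays no essential role here beyond ensuring that $h$, and hence $u$, is globally defined and smooth on all of $\mathfrak{k}$.
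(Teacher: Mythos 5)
Your proof is correct and takes essentially the same route as the paper: local diffeomorphism from the positive-definite Hessian, injectivity from the integrated monotonicity estimate along segments, and surjectivity by showing the image is open, closed, and nonempty in the connected space $\mathfrak{k}$, with closedness coming from the coercivity supplied by hypothesis (3). The only cosmetic differences are that you package closedness as properness (citing Hadamard--Caccioppoli) where the paper runs a direct sequence argument, and that your reading of hypothesis (3) as a uniform bound $H(Y)\ge\delta\,\mathbf{1}$ is precisely how the paper itself uses that hypothesis in its estimate $B(v,H(tv)v)\ge m$ for unit vectors $v$.
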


\begin{proof}
Since the Hessian of $h$ is nondegenerate at every point, the derivative of $\alpha$ is always nonsingular and $\alpha$ is a local diffeomorphism whence the image of $\alpha$ is open and nonempty. On the other hand,
\begin{align*}
u(Y^{\prime})-u(Y)&=\int_{0}^{1}H(Y+t(Y^{\prime}-Y))(Y^{\prime}-Y)dt\\
&=\left(\int_{0}^{1}H(Y+t(Y^{\prime}-Y))dt\right)(Y^{\prime}-Y).
\end{align*}
Since $H$ is nondegenerate and positive definite at every point, $\alpha$ is injective.

To show surjectivity, it is clear that if $u_{n}$ is a sequence of points in the image of $\alpha$ converging to $u$, and such that the sequence of preimages is bounded, then $u_{n}$ converges to some $u$ also in the image of $\alpha$. On the other hand, let $u_{0}=h(0)$ and $0\neq v\in\mathfrak{k}$, so that for $t\geq0$,
\[
B(v,u(tv)-u_{0})=\int_{0}^{t}B(v,H(tv)v)\, dt\geq mt,
\]
for some ($v$ independent) constant $m>0$, since $H$ is always positive definite with operator norm bounded away from zero. Therefore, as $t\to+\infty$, $B(v,u(tv))\to+\infty$ as well. It follows that if we take an unbounded sequence of points $Y_{n}=t_{n}v_{n},n=1,2,\dots$, with $||v_{n}||=1$ and $\{t_{n}\}_{n\in\mathbb{N}}$ unbounded, then the sequence $\{B(v_{n},u_{n})\}_{n\in\mathbb{N}}$ is unbounded, so that the sequence $\{u_{n}\}_{n\in\mathbb{N}}$ is not convergent. Therefore the image of $\alpha$ is also closed and $\alpha$ is surjective.
\end{proof}

\bigskip
For
\[
\tau:=\tau_{1}+i\tau_{2}\in\mathbb{C}^{+}:=\{\tau\in\mathbb{C}:\tau_{2}:=\mathrm{Im}\tau>0\},
\]
consider the diffeomorphism
\begin{equation}
\begin{array}{ccccc}
T^{*}K & \overset{\alpha}{\rightarrow} & T^{*}K & \overset{\psi_{\tau}}{\rightarrow} & K_{\mathbb{C}}\\
(x,Y) & \mapsto & (x,u(Y)) & \mapsto & xe^{\tau u(Y)}.
\end{array}
\end{equation}
The diffeomorphism $\psi_{\tau}$ is studied in \cite{Florentino-Matias-Mourao-Nunes05,Florentino-Matias-Mourao-Nunes06,Hall-Kirwin,Lempert-Szoke10}. Recall that the pullback by $\psi_{\tau}$ of the canonical complex structure on $K_{\mathbb{C}}$ defines, together with the canonical symplectic structure $\omega$ on $T^{*}K$, a K\"ahler structure with symplectic potential $\tau_{2}||u||^{2}$ \cite{Lempert-Szoke10}. Let $J^{\tau}\in\mathrm{End}(T(T^{*}K))$ be the pullback of the canonical complex structure on $K_{\mathbb{C}}$ to $T^{*}K$ by $\alpha\circ\psi_{\tau}$.

\begin{thm}
\label{kahler} For any $\tau\in\mathbb{C}^{+}$, the pair $(\omega,J^{\tau})$ defines a K\"ahler structure on $T^{*}K$, with K\"ahler potential
\[
\kappa(Y)=2\tau_{2}(B(Y,u(Y))-h(Y)).
\]
In particular, the corresponding K\"ahler polarization $\mathcal{P}^{\tau}:=T^{(1,0)}T^{*}K$ is positive.
\end{thm}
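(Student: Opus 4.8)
The statement contains three claims: that $J^{\tau}$ is integrable, that the canonical $\omega$ is compatible with $J^{\tau}$ and the resulting metric is positive-definite (which gives both the K\"ahler property and the positivity of $\mathcal P^{\tau}$), and that $\kappa$ is a global K\"ahler potential. Integrability is free: $J^{\tau}$ is by definition the pullback of the integrable complex structure on $K_{\mathbb C}$ by the diffeomorphism $\Psi_{\tau}:(x,Y)\mapsto xe^{\tau u(Y)}$ (the composite $\alpha\circ\psi_{\tau}$ of the theorem), so $J^{\tau}$ is integrable and $\mathcal P^{\tau}=T^{(1,0)}T^{*}K$ is automatically involutive. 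The point to stress is that one cannot merely transport the K\"ahler structure of the $\psi_{\tau}$-picture along $\alpha$: a short computation from (\ref{eqn:symp-block}) shows $\alpha^{*}\omega\neq\omega$ as soon as $h$ is not the kinetic energy, so the compatibility and positivity of the \emph{canonical} $\omega$ with $J^{\tau}$ must be established directly.

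First I would produce an explicit $(1,0)$-coframe by pulling back the left Maurer--Cartan form of $K_{\mathbb C}$, whose components are of type $(1,0)$ (this is Lemma \ref{lem:frames}). Writing $g=xe^{\tau u(Y)}$, using $x^{-1}dx=\sum_{j}w^{j}T_{j}$ and $du=H(Y)\,dY$, one finds
\[
\Psi_{\tau}^{*}(g^{-1}dg)=e^{-\tau\,ad_{u}}\Big(\sum_{j}w^{j}T_{j}\Big)+\frac{1-e^{-\tau\,ad_{u}}}{ad_{u}}\big(H(Y)\,dY\big),
\]
a $\mathfrak{k}_{\mathbb C}$-valued $(1,0)$-form whose components frame $(\mathcal P^{\tau})^{*}$ (the invertibility of $\frac{1-e^{-\tau\,ad_{u}}}{ad_{u}}$, needed for this, follows from $\mathrm{Im}\,\tau>0$ since $ad_{u}$ is $B$-skew). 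The $Ad$-invariance hypothesis (\ref{eq:3props})(1) now does the algebraic work: differentiating $h(Ad_{k}Y)=h(Y)$ and $u(Ad_{k}Y)=Ad_{k}u(Y)$ yields $[Y,u(Y)]=0$, $[H(Y),ad_{Y}]=0$ and $ad_{u(Y)}=H(Y)\,ad_{Y}$, so in particular $H(Y)$ commutes with $ad_{u(Y)}$. Substituting the real coframe $\{w^{j},dy^{j}\}$, expressed through the above frame and its conjugate, into (\ref{eq:sympform}), I would check that the $(2,0)$- and $(0,2)$-parts of $\omega$ vanish; it is exactly these commutation relations that cause the structure-constant terms in (\ref{eq:sympform}) and the $e^{-\tau\,ad_{u}}$-twist to cancel, so that $\omega$ is of type $(1,1)$, i.e. $\mathcal P^{\tau}$ is Lagrangian.

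For positivity I would reduce the metric to the polarization condition $-i\,\omega(\xi,\bar\xi)>0$ for $0\neq\xi\in\mathcal P^{\tau}$, evaluate it on the frame dual to the coframe above, and show it is controlled by $\tau_{2}H(Y)$, which is positive-definite by hypotheses (\ref{eq:3props})(2)--(3); the $ad_{u}$-twist contributes only a $B$-skew piece that commutes with $H(Y)$ and hence does not spoil definiteness. This gives simultaneously that $(\omega,J^{\tau})$ is K\"ahler and that $\mathcal P^{\tau}$ is a positive polarization. For the potential I would recognize $B(Y,u(Y))-h(Y)$ as the Legendre transform of $h$, so that $d\big(B(Y,u)-h\big)=B(H(Y)Y,dY)$, and verify $\omega=i\,\partial^{\tau}\bar\partial^{\tau}\kappa$ for $\kappa=2\tau_{2}\big(B(Y,u)-h\big)$ by computing $\partial^{\tau}\bar\partial^{\tau}\kappa$ in the frame; as a check, for the kinetic energy $h=\tfrac12|Y|^{2}$ one has $u(Y)=Y$, $\alpha=\mathrm{id}$, and $\kappa=\tau_{2}|Y|^{2}$ reduces to the known potential $\tau_{2}\|u\|^{2}$.

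The main obstacle is the compatibility-and-positivity step for \emph{complex} $\tau$, where the real, positive-definite Hessian $H(Y)$ must be reconciled with the complex operators $e^{-\tau\,ad_{u}}$ and $\frac{1-e^{-\tau\,ad_{u}}}{ad_{u}}$ uniformly over $T^{*}K$. The resolution is precisely the $Ad$-invariance-induced commutation of $H(Y)$ with $ad_{u(Y)}$ (together with $[Y,u(Y)]=0$): it simultaneously diagonalizes the two pieces, forces the $(2,0)$-part to vanish, and decouples positivity into the manifest positivity of $\tau_{2}H(Y)$ and of the $ad_{u}$-rotation block. Without $Ad$-invariance the cross terms would obstruct both the $(1,1)$-type condition and positivity.
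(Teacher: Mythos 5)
Your proposal is correct and follows essentially the same route as the paper's: an explicit left-invariant $(1,0)$-coframe (Lemma \ref{lem:frames}, which you derive by pulling back the Maurer--Cartan form of $K_{\mathbb{C}}$ rather than from the pushforward matrix of $\psi_{\tau}$ used in the paper), the $Ad$-invariance relations $[Y,u(Y)]=0$, $ad_{u}=H\,ad_{Y}$, $[H,ad_{u}]=0$ of Lemma \ref{properties}, the type-$(1,1)$ property by direct computation, positivity via simultaneous diagonalization of $H$ and $ad_{u}$, and the Legendre transform of $h$ as K\"ahler potential. The only differences are cosmetic: the paper obtains the potential using first derivatives only, by showing $\theta(Z_{j}^{\tau})=-\bar{\tau}Z_{j}^{\tau}\left(B(Y,u)-h\right)$, hence $\theta^{(1,0)}=\partial\left(-\bar{\tau}(B(Y,u)-h)\right)$ and $\omega=-d\theta=i\partial\bar{\partial}\kappa$, whereas you propose to verify $\omega=i\partial\bar{\partial}\kappa$ directly in the frame; and your statement that the skew $ad_{u}$-block ``does not spoil definiteness'' should, when carried out, be the explicit eigenvalue computation of (\ref{posi}), giving $\frac{\beta\lambda e^{-\lambda\tau_{2}}}{2\sinh(\lambda\tau_{2})}>0$ since $\beta>0$ and $x/\sinh x>0$.
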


To prove the theorem we will need a few auxiliary lemmas, which turn out to be repeatedly useful in what follows.

\begin{lem}
\label{lem:frames}The $(1,0)$-tangent space of the complex structure $J^{\tau}$ at the point $(x,Y)$ is
\begin{equation}
\mathcal{P}_{(x,Y)}^{\tau}=\left\{ \left(\begin{array}{c}
\mathrm{ad}_{u(Y)}^{-1}\left(\mathbf{1}-e^{\bar{\tau}\,\mathrm{ad}_{u(Y)}}\right)X\\
H(Y)^{-1}X
\end{array}\right):X\in\mathfrak{k}\right\} .\label{tauvectors}
\end{equation}
In particular, the $(1,0)$-tangent space at $(x,Y)$ is spanned by the left $K_{\mathbb C}$-invariant holomorphic frame
\[
Z_{j}^{\tau}:=\sum_{k=1}^{n}\left[\frac{e^{i\tau_{2}ad_{u(Y)}}}{2i\mathrm{sin}(\tau_{2}ad_{u(Y)})}\left(\mathbf{1}-e^{\bar{\tau}\,\mathrm{ad}_{u(Y)}}\right)\right]_{j}^{k}X_{k}+\left[\frac{ad_{u(Y)}e^{i\tau_{2}ad_{u(Y)}}}{2i\mathrm{sin}(\tau_{2}ad_{u(Y)})}\cdot H(Y)^{-1}\right]_{j}^{k}\frac{\partial}{\partial y^{k}}.
\]
The $(1,0)$-cotangent space at the point $(x,Y)$ is 
\[
\left\{ \left(e^{\tau ad_{u(Y)}^{*}}w,\frac{e^{\tau ad_{u}^{*}}-1}{ad_{u}^{*}}Hw\right):w\in\mathfrak{k}^{*}\right\} .
\]
In particular, the $\left\{ Z_{j}\right\} $-dual holomorphic frame of type-$(1,0)$ forms is given by $\{\Omega_{\tau}^{i}\}_{i=1,\dots,n}$ where
\begin{equation}
\Omega_{\tau}^{j}=\sum_{k=1}^{n}\left[e^{-\tau ad_{u(Y)}}\right]_{k}^{j}w^{k}+\left[\frac{1-e^{-\tau ad_{u(Y)}}}{ad_{u(Y)}}H(Y)\right]_{k}^{j}dy^{k}.\label{tauforms}
\end{equation}
\end{lem}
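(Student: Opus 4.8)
The plan is to use the defining property of $J^\tau$: it is the pullback under the biholomorphism $\Phi_\tau:=\psi_\tau\circ\alpha$, $\Phi_\tau(x,Y)=xe^{\tau u(Y)}$, of the canonical complex structure on $K_{\mathbb C}$. Hence the whole type decomposition on $T^*K$ is transported from $K_{\mathbb C}$ by $d\Phi_\tau$: a complexified $1$-form is of type $(1,0)$ exactly when it is the $\Phi_\tau$-pullback of a $(1,0)$-form on $K_{\mathbb C}$, and a complexified tangent vector is of type $(1,0)$ exactly when $d\Phi_\tau$ maps it to a $(1,0)$-vector, equivalently when it is annihilated by all $(0,1)$-forms. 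I would therefore compute $d\Phi_\tau$ once, read off the cotangent frame directly, and obtain the tangent frame by duality.

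First I would compute $d\Phi_\tau$ in the left trivializations. Since $\alpha(x,Y)=(x,u(Y))$ with $du=H(Y)\,dY$, one has $d\alpha=\mathrm{diag}(\mathbf1,H(Y))$. For $\psi_\tau$ I would differentiate $s\mapsto xe^{sX}e^{\tau(u+sV')}$ and left-translate by $g^{-1}=e^{-\tau u}x^{-1}$; the first factor contributes $\mathrm{Ad}_{e^{-\tau u}}X=e^{-\tau\,\mathrm{ad}_{u}}X$, while the second, via $d(\exp)_A=dL_{\exp A}\circ\frac{\mathbf1-e^{-\mathrm{ad}_A}}{\mathrm{ad}_A}$, contributes $\frac{\mathbf1-e^{-\tau\,\mathrm{ad}_u}}{\mathrm{ad}_u}V'$. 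Composing with $d\alpha$ yields
\[
g^{-1}\,d\Phi_\tau\begin{pmatrix}X\\V\end{pmatrix}
=e^{-\tau\,\mathrm{ad}_{u(Y)}}X+\frac{\mathbf1-e^{-\tau\,\mathrm{ad}_{u(Y)}}}{\mathrm{ad}_{u(Y)}}H(Y)V\ \in\ \mathfrak k_{\mathbb C}.
\]
This $\mathfrak k_{\mathbb C}$-valued $1$-form is precisely $\Phi_\tau^*$ of the left-invariant holomorphic Maurer--Cartan forms, so its components in the coframe $\{w^k,dy^k\}$ are exactly the type-$(1,0)$ forms $\Omega^j_\tau$ of (\ref{tauforms}); the alternative $\mathrm{ad}^*_u$-presentation of the $(1,0)$-cotangent space then follows from $\mathrm{ad}^*_u=-\mathrm{ad}_u$ (valid since $u\in\mathfrak k$ and $B$ is $\mathrm{Ad}$-invariant), which rewrites $e^{-\tau\,\mathrm{ad}_u}$ as $e^{\tau\,\mathrm{ad}^*_u}$.

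For the tangent frame I would use that $\mathcal P^\tau_{(x,Y)}$ is the common kernel of the $(0,1)$-forms $\overline{\Omega^j_\tau}$ (obtained by $\tau\mapsto\bar\tau$, as $\mathrm{ad}_u$ is real). Writing $Z_j=\sum_k A^k{}_j X_k+\sum_k B^k{}_j\,\partial/\partial y^k$, the condition $\overline{\Omega^i_\tau}(Z_j)=0$ reads $e^{-\bar\tau\,\mathrm{ad}_u}A+\frac{\mathbf1-e^{-\bar\tau\,\mathrm{ad}_u}}{\mathrm{ad}_u}HB=0$, i.e. $A=\mathrm{ad}_u^{-1}(\mathbf1-e^{\bar\tau\,\mathrm{ad}_u})(HB)$; putting $X:=HB$ gives exactly the description (\ref{tauvectors}) of $\mathcal P^\tau_{(x,Y)}$. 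Imposing the normalization $\Omega^i_\tau(Z_j)=\delta^i_j$ forces $HB=\mathrm{ad}_u\bigl(\mathbf1-e^{-2i\tau_2\mathrm{ad}_u}\bigr)^{-1}$, and the explicit frame $Z^\tau_j$ then drops out after the rewriting $\mathbf1-e^{-2i\tau_2\,\mathrm{ad}_u}=e^{-i\tau_2\mathrm{ad}_u}\,2i\sin(\tau_2\,\mathrm{ad}_u)$.

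The genuinely delicate point is the bookkeeping of two distinct complex structures: the ``external'' scalar $i$ used to complexify the a priori real tangent spaces and to define the type decomposition, versus the ``internal'' $i$ of $\mathfrak k_{\mathbb C}=\mathfrak k\oplus i\mathfrak k$ that enters through $\tau$ and $e^{-\tau\,\mathrm{ad}_u}$ and that realizes the canonical complex structure of $K_{\mathbb C}$ in the left trivialization. These must be kept rigorously separate when asserting that $\Omega^j_\tau$ is of type $(1,0)$ and that its conjugate's kernel is the correct $+i$-eigenspace of $J^\tau$; identifying them would collapse the decomposition. The remaining manipulations are routine, since every operator that appears is a function of the single operator $\mathrm{ad}_{u(Y)}$ together with $H(Y)$, and these commute: $\mathrm{Ad}$-invariance of $h$ gives $[u(Y),Y]=0$, whence $\mathrm{Ad}_{\exp(tu)}Y=Y$ and thus $[\,\mathrm{ad}_{u},H(Y)]=0$, which legitimizes the function-calculus simplifications used above.
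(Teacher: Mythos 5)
Your proposal is correct, and it reaches both \eqref{tauvectors} and \eqref{tauforms} by a route that differs from the paper's in its mechanics, though both rest on the same core object: the derivative of $\psi_\tau\circ\alpha$. The paper works \emph{vectors first}: it writes $[(\psi_\tau)_*]$ as a real $2\times 2$ block matrix relative to $T_gK_{\mathbb C}\simeq\mathfrak k\oplus i\mathfrak k$ (generalizing the $\tau=i$ computation of Hall), composes with $d\alpha=\mathrm{diag}(\mathbf 1,H)$ and $J_{K_{\mathbb C}}$ to obtain the tensor $J^\tau$ explicitly, extracts $\mathcal P^\tau$ by projecting vectors $\begin{pmatrix}X\\0\end{pmatrix}$ onto their $(1,0)$-part (inserting the matrix $\frac{e^{i\tau_2 ad_u}\,ad_u}{2i\sin(\tau_2 ad_u)}$ by hand to fix the normalization), and only then dualizes to get \eqref{tauforms}. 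You work \emph{forms first}: computing $g^{-1}d\Phi_\tau$ in the complex left trivialization via the exponential-derivative formula packages the paper's four real blocks into the single expression $e^{-\tau\,\mathrm{ad}_u}X+\frac{\mathbf 1-e^{-\tau\,\mathrm{ad}_u}}{\mathrm{ad}_u}HV$, whose components are pullbacks of the holomorphic Maurer--Cartan coframe and hence are $(1,0)$-forms with no projection step; the tangent description then falls out as the annihilator of the conjugate forms, and the $2i\sin(\tau_2\,\mathrm{ad}_u)$ normalization emerges forced by duality $\Omega^i_\tau(Z_j)=\delta^i_j$ rather than being introduced by fiat. What each buys: the paper's route produces the tensor $J^\tau$ itself (reusable elsewhere) and leans on the literature for the derivative; yours is self-contained, keeps the internal/external $i$ bookkeeping cleaner (a point you rightly flag as the delicate one), and explains the normalization conceptually. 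Indeed, the paper's closing remark in its proof --- that the $Z_j^\tau$ are pushforwards by $\psi_\tau^{-1}$ of left-invariant holomorphic vector fields on $K_{\mathbb C}$ --- is precisely the dual statement of your Maurer--Cartan starting point. Two minor points to tighten: the parameter $X$ in \eqref{tauvectors} should range over $\mathfrak k_{\mathbb C}$ (or one takes the complex span) for the set to be the full complex $n$-dimensional space, and your step from $\mathrm{Ad}_{\exp(tu)}Y=Y$ to $[\mathrm{ad}_u,H(Y)]=0$ silently uses the $\mathrm{Ad}$-equivariance of the Hessian $H$, which is the paper's Lemma \ref{properties} (and Lemma \ref{adinvmeasure}); it deserves a word.
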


\begin{proof}
The derivative of $\psi_{\tau}$ in the case $\tau=i$ has been computed in \cite{Hall97} and it is straightforward to generalize the result to more general $\tau$ to obtain
\[
[(\psi_{\tau})_{*}]_{(x,Y)}=\begin{pmatrix}e^{-\tau_{1}ad_{Y}}\mathrm{cos}(\tau_{2}ad_{Y}) & \frac{1-e^{-\tau_{1}ad_{Y}}\mathrm{cos}(\tau_{2}ad_{Y})}{ad_{Y}}\\
-e^{-\tau_{1}ad_{Y}}\mathrm{sin}(\tau_{2}ad_{Y}) & \frac{e^{-\tau_{1}ad_{Y}}\mathrm{sin}(\tau_{2}ad_{Y})}{ad_{Y}}
\end{pmatrix},
\]
where the blocks are relative to the decompositions $T_{(x,Y)}TK\simeq\mathfrak{k}\oplus\mathfrak{k}$ and the decomposition $T_{g}K_{\mathbb{C}}\simeq\mathfrak{k}\oplus i\mathfrak{k}$ is obtained by transporting to $K_{\mathbb C}$ the decomposition $T_{1}K_{\mathbb{C}}=\mathfrak{k}\oplus i\mathfrak{k}$ by left translation. In the basis $\{X^{1},\dots,X^{n},\frac{\partial}{\partial y^{1}},\dots,\frac{\partial}{\partial y^{n}}\}$ the derivative of $\alpha$ is
\begin{equation}
\begin{pmatrix}1 & 0\\
0 & H
\end{pmatrix}.
\end{equation}
The standard complex structure on $K_{\mathbb{C}}$ is given by the complex tensor $J_{K_{\mathbb{C}}}=\begin{pmatrix}0 & -1\\1 & 0\end{pmatrix}$, from which one may compute the complex tensor
\[
J_{(x,Y)}^{\tau}:=[(\psi_{\tau}^{-1})_{*}]_{(x,Y)}\circ J_{K_{\mathbb{C}}}\circ[(\psi_{\tau})_{*}]_{(x,Y)}.
\]
It is now straightforward to show that the $(1,0)$-tangent space can be expressed as claimed (by, for example, projecting the space of vectors of the form $\begin{pmatrix}X\\ 0\end{pmatrix}$ onto their $(1,0)$-part; note that we introduce the invertible matrix $\frac{e^{i\tau_{2}ad_{u(Y)}} \, ad_{u(Y)}}{2i\mathrm{sin}(\tau_{2}ad_{u(Y)})}$ to obtain the convenient normalization in \eqref{tauforms}). From (\ref{tauvectors}) (and being careful that $ad_{u(Y)}^{*}w=-w\circ ad_{u(y)}$), the expression in (\ref{tauforms}) follows easily. Note that this frame of holomorphic vector fields is obtained by pushing forward by $\psi_\tau^{-1}$ a frame of left-invariant holomorphic vector fields on $K_{\mathbb C}$. 
\end{proof}

\bigskip
Note that the vector fields $\{Z_{\tau}^{i}\}$ are left $K_{\mathbb C}$-invariant holomorphic vector fields and therefore are coordinate vector fields only for abelian groups.

\begin{lem}
\label{properties} $Ad$-invariance of $h$ implies that
\begin{enumerate}
\item $[Y,u(Y)]=0$,
\item $ad_{Y}=H(Y)^{-1}ad_{u(Y)}=ad_{u(Y)}H(Y)^{-1}$, and
\item $u(Y)$ is equivariant, that is $\forall x\in K$, $u(Ad_{x}(Y))=Ad_{x}(u(Y))$.
\end{enumerate}
\end{lem}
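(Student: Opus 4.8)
The plan is to read off all three identities from the $Ad$-invariance $h(Ad_x Y)=h(Y)$, using two algebraic facts available to us: since $B$ is bi-invariant, every operator $ad_X$ is skew-symmetric with respect to $B$, while $H(Y)$, being a Hessian, is $B$-symmetric (and invertible by hypothesis (\ref{eq:3props})). The bridge between the ``$u$'' identities and the ``$H$'' identity is the observation that, because $u=B^{-1}(dh)$ with $u^{j}=\partial h/\partial y^{j}$, the derivative of the map $Y\mapsto u(Y)$ is precisely the Hessian: $Du_{Y}=H(Y)$. I would record this at the outset.

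First I would establish the equivariance (3), from which the others follow by differentiation. Differentiating $h(Ad_x Y)=h(Y)$ in $Y$ along $V\in\mathfrak{k}$ gives $B(u(Ad_x Y),Ad_x V)=B(u(Y),V)$. Bi-invariance of $B$ moves $Ad_x$ onto the first slot as $Ad_{x}^{-1}$, and since $V$ is arbitrary I conclude $u(Ad_x Y)=Ad_x u(Y)$. For (1), I specialize (3) to the one-parameter subgroup generated by $Y$ itself: taking $x=\exp(tY)$, so that $Ad_{\exp(tY)}Y=Y$, the left-hand side is constant in $t$, while differentiating the right-hand side at $t=0$ yields $ad_Y u(Y)=[Y,u(Y)]$; hence $[Y,u(Y)]=0$. (Equivalently, differentiating $h(Ad_{\exp(tW)}Y)=h(Y)$ at $t=0$ gives $B(u(Y),[W,Y])=0$ for all $W$, and skew-symmetry of $ad_Y$ pulls $Y$ out.)

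Finally, for (2) I would differentiate the equivariance $u(Ad_x Y)=Ad_x u(Y)$ in the group variable. Setting $x=\exp(tW)$ and differentiating at $t=0$, the chain rule together with $Du_{Y}=H(Y)$ turns the left side into $H(Y)[W,Y]$ and the right side into $[W,u(Y)]$; writing $[W,Y]=-ad_Y W$ and $[W,u(Y)]=-ad_{u(Y)}W$ and cancelling the arbitrary $W$ gives
\[
H(Y)\,ad_Y=ad_{u(Y)},
\]
which is the first equality $ad_Y=H(Y)^{-1}ad_{u(Y)}$. Taking the $B$-transpose of this relation, and using that $H(Y)$ is symmetric while $ad_Y$ and $ad_{u(Y)}$ are skew, yields $-ad_Y H(Y)=-ad_{u(Y)}$, i.e. $ad_Y H(Y)=ad_{u(Y)}$. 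Comparing the two shows that $H(Y)$ commutes with $ad_Y$, and both displayed forms in (2) follow.

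Every step is short, and the only one demanding care is (2): one must correctly identify $Du_Y$ with the Hessian, track the sign in $[W,Y]=-[Y,W]$, and in the transpose step combine the skew-symmetry of $ad$ (from bi-invariance of $B$) with the symmetry of $H$. I expect the main obstacle, such as it is, to be purely this bookkeeping, namely verifying that the two forms of (2) are genuinely equivalent, which hinges on the commutation $[H(Y),ad_Y]=0$ that the transpose argument produces.
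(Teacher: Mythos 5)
Your proof is correct, and it takes the same route the paper intends: the paper's own proof is just the one-line assertion that all three conditions ``follow easily from $Ad$-invariance of $h$,'' and your argument supplies exactly those omitted details (differentiating the invariance identity in $Y$ to get equivariance, specializing to $x=\exp(tY)$ for (1), and differentiating equivariance in the group variable, using $Du_Y=H(Y)$ together with $B$-skewness of $ad$ and $B$-symmetry of $H$, for (2)). The bookkeeping in your transpose step, yielding $[H(Y),ad_Y]=0$ and hence both displayed forms of (2), checks out.
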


\begin{proof}
The three conditions follow easily from $Ad$-invariance of $h$. 
\end{proof}

\begin{lem}
The canonical symplectic form $\omega$ is of type $(1,1)$ with respect to $J^{\tau}$. 
\end{lem}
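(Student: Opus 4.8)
The plan is to use the standard characterization that a real $2$-form is of type $(1,1)$ with respect to $J^{\tau}$ precisely when it annihilates every pair of $(1,0)$-vectors: once $\omega^{T^{*}K}$ vanishes on $\mathcal{P}^{\tau}\times\mathcal{P}^{\tau}$, reality of $\omega^{T^{*}K}$ forces it to vanish on the conjugate $(0,1)$-space as well, so both the $(2,0)$- and $(0,2)$-parts die and only the $(1,1)$-part survives. Thus it suffices to show $\omega^{T^{*}K}(\xi,\xi')=0$ whenever $\xi,\xi'$ are sections of $\mathcal{P}^{\tau}$.

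By Lemma~\ref{lem:frames}, abbreviating $u:=u(Y)$ and $H:=H(Y)$, a general $(1,0)$-vector at $(x,Y)$ has the form
\[
\xi_{X}=\begin{pmatrix}ad_{u}^{-1}(\mathbf{1}-e^{\bar{\tau}\,ad_{u}})X\\ H^{-1}X\end{pmatrix},\qquad X\in\mathfrak{k},
\]
and I set $A:=ad_{u}^{-1}(\mathbf{1}-e^{\bar{\tau}\,ad_{u}})$, which is an entire function of $ad_{u}$. Feeding $\xi_{X}$ and $\xi_{X'}$ into the block expression \eqref{eqn:symp-block} for $\omega^{T^{*}K}$ (recalling that the pairing of a row and a column is $B$) gives
\[
\omega^{T^{*}K}(\xi_{X},\xi_{X'})=B(H^{-1}X',AX)-B(H^{-1}X,AX')+B(Y,[AX,AX']).
\]
The goal becomes to show that the operator $M$ determined by $\omega^{T^{*}K}(\xi_{X},\xi_{X'})=B(X,MX')$ vanishes identically. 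Transposing with respect to the (complex-bilinearly extended, symmetric) form $B$, using $H^{t}=H$, the antisymmetry $ad_{u}^{t}=-ad_{u}$ and $ad_{Y}^{t}=-ad_{Y}$ (bi-invariance of $B$), and the associativity identity $B(Y,[a,b])=B([Y,a],b)$, I obtain
\[
M=A^{t}H^{-1}-H^{-1}A-A^{t}\,ad_{Y}\,A,
\]
where $A^{t}$ is $A$ with $ad_{u}$ replaced by $-ad_{u}$.

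The key observation --- and the point where Lemma~\ref{properties} does all the work --- is that every operator appearing in $M$ commutes. Indeed, item (1) gives $[Y,u]=0$, hence $[ad_{Y},ad_{u}]=ad_{[Y,u]}=0$, and item (2) gives $ad_{Y}=H^{-1}ad_{u}=ad_{u}H^{-1}$, so $[H^{-1},ad_{u}]=0$; since $A$, $A^{t}$ and $ad_{Y}=ad_{u}H^{-1}$ are all built from $ad_{u}$ and $H^{-1}$, they commute with each other and with $H^{-1}$. Writing $s:=ad_{u}$ and substituting $A=(\mathbf{1}-e^{\bar{\tau}s})/s$, $A^{t}=(e^{-\bar{\tau}s}-\mathbf{1})/s$ and $ad_{Y}=sH^{-1}$, the expression collapses: both $H^{-1}(A^{t}-A)$ and $A^{t}\,ad_{Y}\,A$ reduce to $\tfrac{1}{s}(e^{\bar{\tau}s}+e^{-\bar{\tau}s}-2)H^{-1}$, and they cancel, so $M=0$.

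I expect the only real obstacle to be bookkeeping: getting the three terms into the common form $B(X,MX')$ with the correct transposes and signs, and checking that the factors $s^{-1}$ are harmless (each operator written with such a factor is in fact the entire function $(\mathbf{1}-e^{\bar{\tau}s})/s$ of $s=ad_{u}$, so no invertibility of $ad_{u}$ is needed). Once the commutativity supplied by Lemma~\ref{properties} is in place, the matrix identity degenerates to the scalar cancellation above, and the conclusion that $\omega^{T^{*}K}$ is of type $(1,1)$ follows at once from the real-form argument of the first paragraph.
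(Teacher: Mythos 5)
Your proof is correct. The paper's own proof is a one-line appeal to ``direct computation from (\ref{eq:sympform}) and (\ref{tauforms})'', i.e.\ it works on the form side, expressing $\omega^{T^*K}$ in coordinates and comparing against the $(1,0)$-coframe $\{\Omega_\tau^j\}$; you instead work on the dual (vector) side, pairing the $(1,0)$-vectors of (\ref{tauvectors}) against the block form (\ref{eqn:symp-block}) and showing the $(2,0)$-part vanishes, with the $(0,2)$-part then dying by reality. The two routes are equivalent verifications of the same fact, but yours has the merit of making explicit exactly what the paper leaves implicit: the reduction to the operator identity $A^tH^{-1}-H^{-1}A-A^t\,ad_Y\,A=0$, the fact that Lemma \ref{properties} is precisely what makes all the operators involved commute (so the identity collapses to a scalar cancellation in the entire function $s\mapsto(e^{\bar\tau s}+e^{-\bar\tau s}-2)/s$), and the observation that $ad_u^{-1}(\mathbf{1}-e^{\bar\tau\,ad_u})$ needs no invertibility of $ad_u$ since it is an entire function of it. Your bookkeeping (the $B$-transposes $ad_u^t=-ad_u$, $ad_Y^t=-ad_Y$, $H^t=H$, and the invariance identity $B(Y,[a,b])=B([Y,a],b)$) is all correct, so the argument is complete as written.
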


\begin{proof}
This follows from direct computation from (\ref{eq:sympform}) and (\ref{tauforms}).
\end{proof}

\begin{proof}[of Theorem \ref{kahler}] To prove positivity, one computes
\begin{equation}
i\omega(\bar{Z}_{j}^{\tau},Z_{k}^{\tau})=\left[\frac{ad_{Y}}{2\sin(\tau_{2}ad_{u})}e^{i\tau_{2}ad_{u}}\right]_{k}^{j}.\label{posi}
\end{equation}

Since $ad_{Y}=ad_{u}\cdot H^{-1}$ and $[ad_{u},H^{-1}]=0$, the symmetric matrix $H^{-1}(Y)$ and the antisymmetric matrix $ad_{u}(Y)$ can be simultaneously diagonalized. Let $\beta>0$ be an eigenvalue of $H^{-1}$ and let $i\lambda,\lambda\in\mathbb{R}$, be an eigenvalue of $ad_{u}$ for the same eigenvector. Then, substituting in to (\ref{posi}), we get for this eigenspace that
\[
\frac{e^{-\lambda\tau_{2}}}{\tau_{2}}\beta\frac{\lambda\tau_{2}}{e^{\lambda\tau_{2}}-e^{-\lambda\tau_{2}}}
\]
This is positive as long as $\tau_{2}>0$.

To determine the K\"ahler potential we follow a calculation similar to the one in \cite{Hall-Kirwin}. We have
\[
\theta(Z_{j}^{\tau})=\left(\frac{1}{2}+\frac{i\tau_{1}}{2\tau_{2}}\right)y^{j}.
\]
On the other hand,
\[
Z_{j}^{\tau}\left(\sum_{i=1}^{n}y^{i}u^{i}-h\right)=-\frac{i}{2\tau_{2}}y^{j},
\]
so that
\[
\theta(Z_{j}^{\tau})=-\bar{\tau}Z_{j}^{\tau}\left(\sum_{k=1}^{n}y^{k}u^{k}-h\right).
\]
Therefore, for any vector field pointwise of type $(1,0)$ the same equation holds. In particular, taking holomorphic coordinate vector fields we obtain
\[
\theta^{(1,0)}=\partial(-\bar{\tau}(Yu-h)),
\]
and it follows that
\[
\omega=-d\theta=-(\partial+\bar{\partial})\theta=i\partial\bar{\partial}(2\tau_{2}(Yu-h))=\omega^{(1,1)},
\]
and the K\"ahler potential is $\kappa=2\tau_{2}(Yu-h)$.
\end{proof}

\section{Time evolution and change of polarization\label{sec:Time-evolution}}

As in the introduction, let $\frac{1}{2}\kappa(x,Y)=\frac{1}{2}|Y|^{2}$ be the kinetic energy function on $T^{*}K$. The Hamiltonian flow $\Phi_{\sigma}^{\kappa/2}:T^{*}K\rightarrow T^{*}K$ corresponding to $\kappa/2$ is the geodesic flow for the bi-invariant metric defined by $B$ on $K$. The natural complex structure on $T^{*}K$ induced by pulling back the complex structure from $K_{\mathbb{C}}$ by the map $(x,Y)\in T^{*}K\mapsto xe^{iY}\in K_{\mathbb{C}}$ can be understood as the analytic continuation of this geodesic flow to ``time-$i$'' in several ways (see \cite{Hall-Kirwin,Guillemin-Stenzel-91,Guillemin-Stenzel-92,Lempert-Szoke-91,Szoke-91} for more details): first, the pushforward by the time-$\sigma$ geodesic flow of the vertical tangent space yields a family of distributions depending on $\sigma\in\mathbb{R}$ which can be analytically continued to yield a complex Lagrangian distribution which turns out to be the $(1,0)$-tangent bundle of $T^{*}K$. Second, if $f$ is a real-analytic function on $K$ which admits an analytic continuation to $T^{*}K$ (with respect to the canonical complex structure), then the value of the analytic continuation of $f$ at $(x,Y)$ can be computed as the analytic continuation of the function $\sigma\mapsto f\circ\pi\circ\Phi_{\sigma}^{\kappa/2}(x,Y)$ to $\sigma=i$. In \cite{Hall-Kirwin2}, Hall and the first author showed that this construction can be generalized to magnetic flows (i.e. Hamiltonian flows of $\kappa/2$ with respect to twisted symplectic forms). In both cases, it is clear that one obtains positive complex (i.e. K\"ahler) structures not only from the time-$i$ flow, but from the time-$\tau$ flow for any $\tau\in\mathbb{C}^{+}.$

In this section, we begin by showing that the family of complex structures described in the previous section arise as the ``time-$\tau$'', $\tau\in\mathbb{C}^{+}$, Hamiltonian flow of the functions $h$ with respect to the standard symplectic form on $T^{*}K$, and we describe this phenomenon from various points of view (in terms of the polarization and in terms of holomorphic functions and trivialized sections). We will also discuss the time-$t$ flow for real $t$. Next, we will study time-$\tau$ $h$-holomorphic half-forms and the resulting K\"ahler quantization of $T^{*}K.$

\bigskip{}
For $\tau\in\mathbb{C}^{+}$, the complex structure $J^{\tau}$ on $T^{*}K$ described in the previous section can be understood in terms of the ``time-$\tau$'' Hamiltonian flow of $h$ at the level of holomorphic functions as in the following theorem, which generalizes the analytic continuation formulas of \cite{Thiemann96,Hall02,Hall-Kirwin}.

Let us prove the following lemma, which describes the Hamiltonian vector field $X_{h}$.

\begin{lem}
The Hamiltonian vector field of the function $h(Y)$ is given by
\begin{equation}
X_{h}=\sum_{i=1}^{n}u^{i}X_{i},\label{hamvect}
\end{equation}
where, as above, $u^{i}=\frac{\partial h}{\partial y^{i}},i=1,\dots,n$. Moreover, $X_{h}$ is a complete vector field on $T^{*}K$, with flow given explicitly by
\[
\phi_{h}(t;(x,Y))=(xe^{tu},Y),\ t\in\mathbb{R}.
\]
\end{lem}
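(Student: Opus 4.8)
The plan is to compute the Hamiltonian vector field directly from the defining relation $\iota_{X_h}\omega = dh$ using the block form of the symplectic form and the hypothesis that $h$ depends only on $Y$. First I would write $X_h = \begin{pmatrix} A \\ B \end{pmatrix}$ in the $\mathfrak{k}\oplus\mathfrak{k}$ decomposition and recall from the earlier formula for $\omega^{T^*K}_{(x,Y)}$ that contracting with a test vector $\begin{pmatrix} Z \\ W \end{pmatrix}$ yields
\[
\omega\!\left(\begin{pmatrix} A \\ B \end{pmatrix}, \begin{pmatrix} Z \\ W \end{pmatrix}\right) = B(W,A) - B(B,Z) + B(Y,[A,Z]).
\]
On the other side, since $h=h(Y)$, the differential is $dh\left(\begin{pmatrix} Z \\ W \end{pmatrix}\right) = B(u(Y),W)$ by definition of the $B$-gradient $u$. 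Matching the coefficient of $W$ forces $A = u(Y)$, and matching the coefficient of $Z$ (using $B(Y,[A,Z]) = B([Y,A],Z)$ via $\mathrm{ad}$-invariance of $B$, together with the relation $-B(B,Z)+B([Y,A],Z)=0$) forces $B = [Y,u(Y)]$. By Lemma \ref{properties}(1), $Ad$-invariance of $h$ gives $[Y,u(Y)]=0$, so $B=0$ and hence $X_h = \begin{pmatrix} u(Y) \\ 0 \end{pmatrix} = \sum_{i=1}^n u^i X_i$, which is \eqref{hamvect}.

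For the flow, I would verify that $\phi_h(t;(x,Y)) = (xe^{tu(Y)}, Y)$ solves the flow equation. The $\mathfrak{k}$-component is constant, so $u(Y)$ does not change along the trajectory, which makes the curve $t\mapsto xe^{tu(Y)}$ a one-parameter subgroup translate whose velocity at $(xe^{tu},Y)$ is exactly the left-invariant field $u(Y)$ evaluated there; this matches $X_h$ at every point of the trajectory. The initial condition at $t=0$ is clearly $(x,Y)$, so by uniqueness of integral curves this is the flow.

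Completeness follows immediately from the explicit formula: since $\phi_h(t;(x,Y))$ is defined for all $t\in\mathbb{R}$ (the exponential $e^{tu(Y)}$ exists for all real $t$ because $K$ is a Lie group and $u(Y)\in\mathfrak{k}$ is a fixed element independent of $t$), the vector field $X_h$ is complete. I expect the only genuinely delicate step to be the bookkeeping in the symplectic contraction: one must use $Ad$-invariance of $B$ to rewrite $B(Y,[A,Z])$ as $B([Y,A],Z)$ and then invoke Lemma \ref{properties}(1) to kill the resulting term, so that the $\mathfrak{k}^*$-fiber component of $X_h$ vanishes. Everything else is a routine matching of coefficients and a direct check of the integral curve.
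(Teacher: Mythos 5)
Your proposal is correct and follows essentially the same route as the paper: the paper's proof is exactly a direct computation from $\iota_{X_h}\omega = dh$ using the expression for $\omega^{T^*K}$ together with $[Y,u(Y)]=0$ from $Ad$-invariance of $h$ (Lemma \ref{properties}), followed by an immediate verification of the flow formula; you have merely written out the block-form bookkeeping, the integral-curve check, and the completeness remark that the paper leaves implicit. The only cosmetic issue is the clash of notation between the bilinear form $B$ and the fiber component you also call $B$, which you should rename to avoid confusion.
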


\begin{proof}
By direct computation, using (\ref{eq:sympform}), the fact that $[Y,u(Y)]=0$ due to $Ad$ invariance of $h$, and $\iota_{X_{h}}\omega=dh=\sum_{i=1}^{n}u^{i}dy^{i}$, one obtains (\ref{hamvect}). It is immediate to check the expression for the flow $\phi$.
\end{proof}

\begin{thm}
\label{analyticcont} Suppose $f\in C^{\infty}(K)$ admits a $J^{\tau}$-analytic continuation (also denoted by $f$) to $T^{*}K$ for some $\tau\in\mathbb{C}^{+}$.
Then,
\[
f(xe^{\tau u})=e^{\tau X_{h}}f(x):=\sum_{_{k=0}}^{\infty}\frac{\tau^{k}X_{h}^{k}}{k!}f(x).
\]
Explicitly, if $f$ is given in Peter--Weyl form
\[
f(x)=\sum_{\rho\in\hat{K}}Tr(A_{\rho}\rho(x)),
\]
where $A_{\rho}$ is an endomorphism of the representation space $V_{\rho}$, then
\[
e^{\tau X_{h}}f(x)=\sum_{\rho\in\hat{K}}Tr(A_{\rho}\rho(xe^{\tau u})),
\]
where $\rho(xe^{\tau u})$ is the unique representation of $K_{\mathbb{C}}$ whose restriction to $K$ is $\rho$. 
\end{thm}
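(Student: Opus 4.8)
The plan is to expand $f$ in its Peter--Weyl series and to verify the identity term by term on matrix coefficients, where everything collapses to the exponential of a single finite-dimensional matrix, and only afterwards to address convergence of the full series. First I would record the key structural fact, using the description $X_{h}=\sum_{i=1}^{n}u^{i}X_{i}$ from the preceding lemma: the $X_{i}$ are left-invariant vector fields on $K$ (pulled back to $T^{*}K$) while the coefficients $u^{i}=u^{i}(Y)$ depend only on the fiber variable $Y$. Consequently $X_{i}u^{j}=0$, so when $X_{h}$ is applied repeatedly to a function of $x$ alone it acts, at each fixed $Y$, as the constant-coefficient left-invariant operator $\sum_{i}u^{i}(Y)X_{i}$, and no derivative ever falls on the $u^{i}$.

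Next I would compute on a single matrix coefficient. For an irreducible $\rho$ with differential $d\rho:\mathfrak{k}\to\mathrm{End}(V_{\rho})$, the defining relation $X_{i}\rho(x)=\rho(x)\,d\rho(T_{i})$ gives $X_{h}\rho(x)=\rho(x)\,d\rho(u)$, and by the remark above this iterates cleanly to $X_{h}^{k}\rho(x)=\rho(x)\,d\rho(u)^{k}$. Since $d\rho(u)$ is an endomorphism of the finite-dimensional space $V_{\rho}$, the series
\[
e^{\tau X_{h}}\rho(x)=\sum_{k=0}^{\infty}\frac{\tau^{k}}{k!}\rho(x)\,d\rho(u)^{k}=\rho(x)\,e^{\tau\,d\rho(u)}
\]
is entire in $\tau$. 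Recalling that $\rho$ extends to a holomorphic representation of $K_{\mathbb{C}}$ with $e^{\tau\,d\rho(u)}=\rho(e^{\tau u})$ and that $\rho$ is multiplicative, this reads $e^{\tau X_{h}}\rho(x)=\rho(x)\rho(e^{\tau u})=\rho(xe^{\tau u})$. Applying $\mathrm{Tr}(A_{\rho}\,\cdot\,)$ establishes the claimed formula for each Peter--Weyl summand.

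It then remains to sum over $\rho\in\hat{K}$ and to interchange the summation over $k$ (implicit in $e^{\tau X_{h}}$) with the summation over $\rho$; this interchange is the one real obstacle. I would control it using the hypothesis that $f$ admits a $J^{\tau}$-analytic continuation to $T^{*}K$: via the diffeomorphism $\alpha\circ\psi_{\tau}$ this means precisely that $F(g):=\sum_{\rho}\mathrm{Tr}(A_{\rho}\rho(g))$ converges, uniformly on compacta, to a holomorphic function on a neighborhood in $K_{\mathbb{C}}$ of the image point $g=xe^{\tau u(Y)}$, so that the right-hand side $\sum_{\rho}\mathrm{Tr}(A_{\rho}\rho(xe^{\tau u}))$ is meaningful and equals $F(xe^{\tau u})=f(xe^{\tau u})$ by the definition of the continuation. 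To license the interchange I would prove absolute convergence of the double series $\sum_{\rho}\sum_{k}\tfrac{|\tau|^{k}}{k!}\big|\mathrm{Tr}(A_{\rho}\rho(x)\,d\rho(u)^{k})\big|$. Since $\rho|_{K}$ is unitary, $\|\rho(x)\|=1$, and the inner $k$-sum is bounded by $(\dim V_{\rho})\,\|A_{\rho}\|\,e^{|\tau|\,\|d\rho(u)\|}$; the point is then that the factor $e^{|\tau|\,\|d\rho(u)\|}$ can be dominated by the operator norms of $\rho$ at purely imaginary arguments $e^{\pm is u}\in K_{\mathbb{C}}$, whose growth as $\rho$ ranges over $\hat{K}$ is exactly what the assumed convergence of the Peter--Weyl series of the continuation at nearby values of $\tau$ keeps in check. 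Granting this estimate, Fubini permits the rearrangement, and summing the term-by-term identity of the previous paragraph yields
\[
e^{\tau X_{h}}f(x)=\sum_{\rho\in\hat{K}}\mathrm{Tr}(A_{\rho}\rho(xe^{\tau u}))=f(xe^{\tau u}),
\]
as claimed. I expect the bookkeeping of this last domination — relating $\|d\rho(u)\|$ to the representation norms that the analytic-continuation hypothesis actually controls — to be the only delicate part; the algebraic heart of the statement is the elementary finite-matrix computation of the second paragraph.
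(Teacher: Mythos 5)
Your algebraic core is correct and is a nice re-derivation of the ``Explicitly'' formula: since the coefficients $u^{i}$ depend only on $Y$ while the $X_{i}$ differentiate only in $x$, one indeed gets $X_{h}^{k}\rho(x)=\rho(x)\,d\rho(u)^{k}$ and hence $e^{\tau X_{h}}\rho(x)=\rho(x)e^{\tau\,d\rho(u)}=\rho(xe^{\tau u})$ termwise. Note that the paper never performs this computation: its entire proof is a reduction via the diffeomorphism $\alpha$ --- since $J^{\tau}=\alpha_{*}^{-1}\circ\tilde{J}^{\tau}\circ\alpha_{*}$, $\alpha_{*}X_{j}=X_{j}$ and $\alpha^{*}y^{j}=u^{j}$, the theorem follows by pulling back the corresponding known statement for $\tilde{J}^{\tau}=\psi_{\tau}^{*}J_{K_{\mathbb{C}}}$, which is exactly Theorem 14 of \cite{Hall02} together with \cite{Hall-Kirwin}. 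So your route is genuinely different, but the difference is consequential: the cited theorem is where all of the analysis lives, and your proposal does not supply a working replacement for it.

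The gap sits in your third paragraph, in two places. First, the hypothesis that $f$ admits a $J^{\tau}$-analytic continuation does \emph{not} ``mean precisely'' that $\sum_{\rho}\mathrm{Tr}(A_{\rho}\rho(g))$ converges locally uniformly near $g=xe^{\tau u(Y)}$; it means only that some holomorphic $F$ on $K_{\mathbb{C}}$ restricts to $f$ on $K$. That the termwise-continued Peter--Weyl series converges and sums to this $F$ is itself a theorem (essentially the content of the result the paper cites), so as written you assume a large part of what is to be proved. Second, your proposed domination for Fubini cannot be completed as stated: the pointwise bound $e^{|\tau|\,\|d\rho(u)\|}\leq\max\bigl(\|\rho(e^{i|\tau|u})\|,\|\rho(e^{-i|\tau|u})\|\bigr)$ is fine, but convergence (even absolute convergence) of the \emph{trace} series at nearby arguments does not control $\sum_{\rho}\dim V_{\rho}\,\|A_{\rho}\|\,e^{|\tau|\,\|d\rho(u)\|}$, because there is no inequality recovering $\|A_{\rho}\|\,\|\rho(g)\|$ from $|\mathrm{Tr}(A_{\rho}\rho(g))|$: traces can cancel massively while the norms are huge. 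What one actually needs is exponential decay of the coefficients in the highest weight, e.g. rapid decay of $\|A_{\rho}\rho(e^{\pm isu})\|$ and $\|\rho(e^{\pm isu})A_{\rho}\|$ for some $s>|\tau|$; this follows by identifying $A_{\rho}\rho(g_{0})$ and $\rho(g_{0})A_{\rho}$ as the Fourier coefficients of the smooth functions $x\mapsto F(g_{0}x)$ and $x\mapsto F(xg_{0})$ (an identity one proves for $g_{0}\in K$ by Schur orthogonality and extends to $g_{0}\in K_{\mathbb{C}}$ by holomorphy, $K$ being totally real), and then invoking rapid decay of Fourier coefficients of smooth functions on $K$. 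Either carry out this coefficient-decay argument in full, or do what the paper does: quote \cite[Thm. 14]{Hall02} and \cite{Hall-Kirwin} for the complex structure $\tilde{J}^{\tau}$ and pull everything back by $\alpha$, which turns $e^{\tau\sum_{j}y^{j}X_{j}}$ into $e^{\tau X_{h}}$.
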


\begin{proof}
Let $\tilde{J}^{\tau}$ be the complex structure on $T^{*}K$ obtained by pulling back the canonical complex structure on $K_{\mathbb{C}}$ by the diffeomorphism $\psi_{\tau}$, so that $J^{\tau}=\alpha_{*}^{-1}\circ\tilde{J}^{\tau}\circ\alpha_{*}$. From \cite[Thm 14]{Hall02} and \cite{Hall-Kirwin}, we know that
\[
e^{\tau\sum_{j=1}^{n}u^{j}X_{j}}f(x)
\]
is the $\tilde{J}^{\tau}$-analytic continuation of $f$ to $T^{*}K$. Therefore, composing with $\alpha$ gives the $J_{\tau}$-analytic continuation of $f$ to $T^{*}K$. But, since $\alpha_{*}X_{j}=X_{j},\ \alpha^{*}u=u(Y)$, we have
\[
\alpha^{*}\left(e^{\tau\sum_{j=1}^{n}u^{j}X_{j}}f(x)\right)=e^{\tau X_{h}}f(x),
\]
as desired.
\end{proof}

\bigskip{}
Since the function $h$ is not necessarily real analytic, it is perhaps not clear that the (real-time) Hamiltonian flow of $h$ can be expressed as a convergent exponential series of $tX_{h}$ (which is known to be true in the case for $h(Y)=\frac12 |Y|^2$), but it is nevertheless true, as the following lemma shows.

\begin{cor}
\label{realanalyticcont} Let $f\in C^{\infty}(K)$ admit an analytic continuation (also denoted by $f$) to $T^{*}K$, as in Theorem \ref{analyticcont}. Then we can take $t\in\mathbb{R}\in\partial\mathbb{C}^{+}$, and the action of the Hamiltonian flow of $h$ on functions can be written as a convergent series as
\begin{equation}
f(xe^{tu})=e^{tX_{h}}f(x)=\sum_{_{k=0}}^{\infty}\frac{t^{k}X_{h}^{k}}{k!}f(x).\label{eq:uTaylorSeries}
\end{equation}
\end{cor}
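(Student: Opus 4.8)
The plan is to leverage Theorem \ref{analyticcont}, which already gives the series expansion $f(xe^{\tau u}) = e^{\tau X_h}f(x)$ for all $\tau \in \mathbb{C}^+$, and to extend this identity to the boundary case $\tau = t \in \mathbb{R} = \partial\mathbb{C}^+$ by a continuity/analyticity argument. The key observation is that the right-hand side of the theorem's conclusion is expressed in Peter--Weyl form as $\sum_{\rho \in \hat{K}} \mathrm{Tr}(A_\rho \rho(xe^{\tau u}))$, where $\rho(xe^{\tau u})$ makes perfectly good sense for $\tau = t$ real, since $xe^{tu} \in K \subset K_\mathbb{C}$ and the representation $\rho$ is defined on all of $K_\mathbb{C}$. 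So the first step is to recall that the Hamiltonian flow $\phi_h(t;(x,Y)) = (xe^{tu}, Y)$ computed in the preceding lemma is genuinely real-analytic in $t$ for each fixed $(x,Y)$, because $t \mapsto xe^{tu}$ is a curve in the Lie group obtained by exponentiating the fixed element $u = u(Y)$.

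First I would fix a point $x \in K$ and consider, for each irreducible $\rho$, the matrix-valued function $\tau \mapsto \rho(xe^{\tau u})$. Since $\rho$ is a holomorphic representation of $K_\mathbb{C}$ and $\tau \mapsto xe^{\tau u}$ is a holomorphic curve in $K_\mathbb{C}$ (it is $x$ times the one-parameter holomorphic subgroup generated by $u$), the composite is holomorphic in $\tau$ on all of $\mathbb{C}$, not merely on $\mathbb{C}^+$. In particular $\rho(xe^{\tau u}) = \sum_{k=0}^\infty \frac{\tau^k}{k!}\rho(u)^k\rho(x)$ as an everywhere-convergent power series (here $\rho(u)$ denotes the derived Lie-algebra representation applied to $u$), and since $X_h = \sum_i u^i X_i$ acts on the matrix coefficient $x \mapsto \mathrm{Tr}(A_\rho\rho(x))$ precisely by right-differentiation in the direction $u$, the term-by-term action of $X_h^k$ reproduces exactly the coefficients $\rho(u)^k$. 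Thus for a single Peter--Weyl summand the identity $f(xe^{tu}) = \sum_k \frac{t^k X_h^k}{k!}f(x)$ holds for all real $t$ by the elementary convergence of the matrix exponential.

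The remaining step, and the one I expect to be the main obstacle, is to pass from a single Peter--Weyl component to the full sum $f = \sum_\rho \mathrm{Tr}(A_\rho\rho(\cdot))$ and to justify interchanging the infinite sum over $\rho$ with the infinite sum over $k$ defining the exponential series. The cleanest route is to invoke the hypothesis that $f$ already admits a $J^\tau$-analytic continuation for some $\tau \in \mathbb{C}^+$: by Theorem \ref{analyticcont} this forces the $A_\rho$ to decay fast enough that $\sum_\rho \mathrm{Tr}(A_\rho\rho(xe^{\tau u}))$ converges for that $\tau$, hence (by the maximum principle / domination on the closed strip, using that $\|\rho(u)\|$ controls the growth) the series converges uniformly on compact subsets of a neighborhood of $\mathbb{C}^+ \cup \mathbb{R}$ in $\mathbb{C}$. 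This lets me regard $\tau \mapsto \sum_\rho \mathrm{Tr}(A_\rho\rho(xe^{\tau u}))$ as a single holomorphic function up to and including the boundary $\mathbb{R}$, whose restriction to $\mathbb{C}^+$ agrees with $e^{\tau X_h}f$ by Theorem \ref{analyticcont}; evaluating the analytic continuation at $\tau = t \in \mathbb{R}$ then yields both the geometric expression $f(xe^{tu})$ (since $xe^{tu}$ lies in $K$ where the continuation restricts to the flow of $h$) and the convergent series $\sum_k \frac{t^k X_h^k}{k!}f(x)$, completing the proof. The delicate point is controlling the joint convergence, for which the positivity and operator-norm lower bound on the Hessian $H$ from \eqref{eq:3props}---which guarantee, via Lemma \ref{properties}, that $u(Y)$ and hence the relevant growth of $\rho(u)$ are well behaved---are exactly what is needed.
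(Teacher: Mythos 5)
Your single-summand computation is correct, and it is essentially what Theorem \ref{analyticcont} already encodes: since $u$ is constant in $x$, $X_{h}^{k}$ applied to $\mathrm{Tr}(A_{\rho}\rho(x))$ gives $\mathrm{Tr}(A_{\rho}\rho(x)\rho_{*}(u)^{k})$, and summing the exponential series yields $\mathrm{Tr}(A_{\rho}\rho(xe^{tu}))$ for every real $t$ (indeed every $\tau\in\mathbb{C}$). The overall strategy --- evaluate the formulas of Theorem \ref{analyticcont} at $\tau=t\in\mathbb{R}$ and argue that the full Peter--Weyl series still converges there --- is also the paper's strategy. The gap is in how you justify that last convergence step. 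You invoke ``the maximum principle / domination on the closed strip'' to upgrade convergence for $\tau\in\mathbb{C}^{+}$ to locally uniform convergence on a neighborhood of $\mathbb{C}^{+}\cup\mathbb{R}$. The maximum principle cannot do this: convergence of a series of holomorphic functions on an open set does not propagate to the boundary of that set (the geometric series $\sum z^{n}$ on the unit disk is the standard counterexample), let alone to an open neighborhood of the closure, and convergence at a single interior $\tau$ at a single point $(x,Y)$ imposes no decay whatsoever on the $A_{\rho}$. So as written, the key step has no valid justification.

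What the paper does instead is a direct term-by-term domination that uses the group structure and unitarity rather than complex analysis: for $\tau_{2}>0$,
\[
\|\rho(xe^{tu})\|\;\leq\;\|\rho(xe^{(t+i\tau_{2})u})\|,
\]
where the left-hand side equals $1$ because $xe^{tu}\in K$ for real $t$ and $\rho|_{K}$ is unitary, while $\rho(xe^{(t+i\tau_{2})u})=\rho(xe^{tu})\,e^{i\tau_{2}\rho_{*}(u)}$ is a unitary matrix times a positive Hermitian one of norm at least $1$. Thus every term of the series at the boundary point $t$ is dominated in norm by the corresponding term of the series at the interior point $t+i\tau_{2}$, whose convergence is exactly the content of Theorem \ref{analyticcont}; the corollary is then a one-line evaluation at $\tau=t$. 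This elementary inequality is the concrete ingredient your ``domination on the closed strip'' gestures at but never produces. (Your coefficient-decay idea could be salvaged, but only by using convergence at \emph{all} points $(x,Y)$ --- exploiting that $Y\mapsto u(Y)$ is onto $\mathfrak{k}$ --- together with lower bounds on matrix coefficients; the norm inequality above is far more economical and is the route the paper takes.)
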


\begin{rem}
In the case that $h=\frac12 |Y|^2$ is the usual quadratic kinetic energy function, a short computation shows that $X_h^{k}f(x)$ is a homogeneous polynomial in $Y$ of degree $k$, so that \eqref{eq:uTaylorSeries} can be regarded as a Taylor series in the fiber. For more general $h$, $X_h^{k}f(x)$ is no longer necessarily a homogeneous polynomial in $Y$.
\end{rem} 

\begin{proof} This follows by evaluating the formulae in Theorem \ref{analyticcont} at $\tau=t\in\mathbb{R}$. Note that the convergence of the Peter--Weyl series for $\tau\in\mathbb{C}^{+}$ implies its convergence for $\tau=t\in\mathbb{R}$, since for $\tau_{2}>0,$ the operator norms satisfy
\[
||\rho(xe^{tu}))||\leq||\rho(xe^{(t+i\tau_{2})u}))||.
\]
\end{proof}

\bigskip
Recall that the vertical polarization $\mathcal{P}^{0}$ is the complexification of the vertical tangent space to $T^{*}K$.

\begin{thm}
\label{thmomegatau} Let $\tau\in\mathbb{C}^{+}$. Then
\begin{enumerate}
\item interpreted as an infinite series, the operator $e^{\bar{\tau}\mathcal{L}_{X_{h}}}$ applied to the section $\frac{\partial}{\partial u^{i}}$ of $\mathcal{P}^{0}$ converges, and hence
\[
\mathcal{P}^{\tau}=e^{\bar{\tau}\mathcal{L}_{X_{h}}}\mathcal{P}^{0},\ \tau\in\mathbb{C}^{+}
\]
as distributions, and
\item let $\Omega_{\tau}:=\Omega_{\tau}^{1}\wedge\cdots\wedge\Omega_{\tau}^{n}$ denote the left $K_{\mathbb C}$-invariant trivializing section of the canonical bundle $\bigwedge^{n}(\mathcal{P}^{\tau})^{*}$ corresponding to $\mathcal{P}^{\tau}$, then
\begin{align}
\label{lifooo}
\Omega_{\tau} & =e^{\tau\mathcal{L}_{X_{h}}}\Omega_{0},
\end{align}
where $\Omega_{0}:=w^{1}\wedge\cdots\wedge w^{n}$ is the Haar measure on $K$ pulled back to $T^{*}K$. Moreover, the expression in the right-hand side of (\ref{lifooo}) makes sense as a convergent series.
\end{enumerate}
\end{thm}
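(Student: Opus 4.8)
The plan is to establish both statements by a single mechanism: the Lie derivative $\mathcal{L}_{X_h}$ preserves a finite-rank subbundle of $T(T^{*}K)$ (respectively of $\bigwedge^{\bullet}(T^{*}K)^{*}$) on which, along each flow line, it acts by a constant matrix built from $ad_{u(Y)}$. Consequently the operator $e^{\bar{\tau}\mathcal{L}_{X_h}}$ (respectively $e^{\tau\mathcal{L}_{X_h}}$) reduces to an ordinary matrix exponential of an entire function of $ad_{u(Y)}$, and recognizing these matrix functions as exactly the ones in Lemma \ref{lem:frames} identifies the limits with $\mathcal{P}^{\tau}$ and $\Omega_{\tau}$.

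For part (1) I would first compute the two brackets $[X_h,\partial/\partial u^{k}]$ and $[X_h,X_{k}]$. Using $X_h=\sum_{i}u^{i}X_{i}$ (from the Hamiltonian vector field lemma) together with the fact that $u^{i}$ depends only on $Y$ (so $X_{k}u^{i}=0$ and $[X_{i},\partial/\partial u^{k}]=0$), one finds $\mathcal{L}_{X_h}\partial/\partial u^{k}=-X_{k}$ and $\mathcal{L}_{X_h}X_{k}=\sum_{j}(ad_{u})^{j}_{k}X_{j}$, where $(ad_{u})^{j}_{k}=\sum_{i}u^{i}C^{j}_{ik}$. Crucially, $X_h u=0$ forces $ad_{u}$ to be constant along the flow, so iteration gives $\mathcal{L}_{X_h}^{m}(\partial/\partial u^{k})=-(ad_{u}^{m-1})^{j}_{k}X_{j}$ for $m\geq1$. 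Summing the series and using that $z\mapsto(1-e^{\bar{\tau}z})/z$ is entire (hence the matrix series converges at each fixed $Y$) yields $e^{\bar{\tau}\mathcal{L}_{X_h}}(\partial/\partial u^{k})=\partial/\partial u^{k}+[(1-e^{\bar{\tau}ad_{u}})/ad_{u}]^{j}_{k}X_{j}$. Rewriting $\partial/\partial u^{k}=(H^{-1})^{l}_{k}\,\partial/\partial y^{l}$ and comparing with (\ref{tauvectors}) shows this is exactly the element of $\mathcal{P}^{\tau}_{(x,Y)}$ associated with $X=T_{k}$; as $k$ ranges these span $\mathcal{P}^{\tau}$, proving $\mathcal{P}^{\tau}=e^{\bar{\tau}\mathcal{L}_{X_h}}\mathcal{P}^{0}$.

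For part (2) I would dualize the same idea. By Cartan's formula $\mathcal{L}_{X_h}=d\iota_{X_h}+\iota_{X_h}d$ together with $\iota_{X_h}w^{j}=u^{j}$ and the Maurer--Cartan relation $dw^{j}=-\tfrac{1}{2}\sum C^{j}_{kl}\,w^{k}\wedge w^{l}$ (read off from (\ref{eq:sympform})), one gets $\mathcal{L}_{X_h}w^{j}=du^{j}-\sum_{k}(ad_{u})^{j}_{k}w^{k}$, while $\mathcal{L}_{X_h}du^{j}=d(X_h u^{j})=0$. Thus on the coframe $\{w^{j},du^{j}\}$ the generator is the (flow-constant) block matrix $\begin{pmatrix}-ad_{u} & \mathbf{1}\\ 0 & 0\end{pmatrix}$, whose exponential has blocks $e^{-\tau ad_{u}}$ and $(1-e^{-\tau ad_{u}})/ad_{u}$. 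Substituting $du^{k}=\sum_{l}H_{kl}\,dy^{l}$ gives $e^{\tau\mathcal{L}_{X_h}}w^{j}=\sum_{k}[e^{-\tau ad_{u}}]^{j}_{k}w^{k}+[\tfrac{1-e^{-\tau ad_{u}}}{ad_{u}}H]^{j}_{l}\,dy^{l}$, which is precisely $\Omega_{\tau}^{j}$ of (\ref{tauforms}). Finally, since $\mathcal{L}_{X_h}$ is a derivation of the exterior algebra and all the relevant series converge absolutely, $e^{\tau\mathcal{L}_{X_h}}$ is multiplicative (checked factor-by-factor via the Leibniz expansion $\mathcal{L}_{X_h}^{m}(\omega\wedge\eta)=\sum_{p}\binom{m}{p}\mathcal{L}_{X_h}^{p}\omega\wedge\mathcal{L}_{X_h}^{m-p}\eta$ and a Cauchy-product rearrangement), so $e^{\tau\mathcal{L}_{X_h}}\Omega_{0}=\bigwedge_{j}e^{\tau\mathcal{L}_{X_h}}w^{j}=\Omega_{\tau}$.

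The main obstacle is the convergence and multiplicativity bookkeeping: one must verify that the formal matrix exponentials converge pointwise (immediate, since $ad_{u}$ is a fixed antisymmetric matrix at each point and the generating functions are entire) and that the derivation $e^{\tau\mathcal{L}_{X_h}}$ genuinely respects the wedge product when applied to the convergent series, not merely formally. A secondary subtlety worth flagging is the $\bar{\tau}$ for the distribution versus $\tau$ for the canonical section: this is not a sign error but reflects that the analytic continuation of the real family of pushed-forward vertical vectors lands in the holomorphic rather than antiholomorphic tangent space, and it is reproduced automatically by matching the two computations against the $e^{\bar{\tau}ad_{u}}$ and $e^{-\tau ad_{u}}$ appearing respectively in (\ref{tauvectors}) and (\ref{tauforms}).
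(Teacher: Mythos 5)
Your proof is correct, and its computational core coincides with the paper's: the brackets $\mathcal{L}_{X_{h}}\frac{\partial}{\partial u^{k}}=-X_{k}$ and $\mathcal{L}_{X_{h}}X_{k}=ad_{u}X_{k}$, the Cartan/Maurer--Cartan computation of $\mathcal{L}_{X_{h}}w^{j}$, and the key observation that $X_{h}u=0$ makes $ad_{u}$ constant along the flow, so that both exponentials collapse to matrix exponentials of entire functions of $ad_{u}$. Where you genuinely diverge is in the identification steps. For part (1), the paper does not compare against the explicit frame \eqref{tauvectors}; instead it uses the operator identity $e^{\tau\mathcal{L}_{X_{h}}}\frac{\partial}{\partial u^{j}}=e^{\tau X_{h}}\frac{\partial}{\partial u^{j}}e^{-\tau X_{h}}$ on functions, together with Theorem \ref{analyticcont}, to show that the exponentiated vertical directions annihilate every $J^{\tau}$-holomorphic function $f(xe^{\tau u})$, hence span $\overline{\mathcal{P}^{\tau}}$, and concludes by conjugation. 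Your direct matching with Lemma \ref{lem:frames} avoids invoking Theorem \ref{analyticcont} altogether; as a side benefit, your series $\frac{\partial}{\partial u^{k}}+\left[\left(1-e^{\bar{\tau}ad_{u}}\right)ad_{u}^{-1}\right]_{k}^{j}X_{j}$ is the one consistent with \eqref{tauvectors} --- the corresponding display in the paper's proof omits the factor $ad_{u}^{-1}$, evidently a typo, since its first-order term in $\bar{\tau}$ disagrees with the bracket $\mathcal{L}_{X_{h}}\frac{\partial}{\partial u^{j}}=-X_{j}$. For part (2), your direct exponentiation of the block matrix $\begin{pmatrix}-ad_{u} & \mathbf{1}\\ 0 & 0\end{pmatrix}$ is equivalent to the paper's argument, which instead shows that $\Omega_{\tau}^{j}$ and $e^{\tau\mathcal{L}_{X_{h}}}w^{j}$ solve the same linear ODE in $\tau$ with the same initial condition; both arguments need $\mathcal{L}_{X_{h}}du^{j}=0$, which you state and the paper leaves implicit. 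Finally, your Leibniz/Cauchy-product justification that $e^{\tau\mathcal{L}_{X_{h}}}$ respects wedge products fills a step the paper passes over silently when going from $\Omega_{\tau}^{j}=e^{\tau\mathcal{L}_{X_{h}}}w^{j}$ to \eqref{lifooo}, so making it explicit is a small but genuine gain in rigor.
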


\begin{proof}[of Theorem \ref{thmomegatau}] First, using $\mathcal{L}_{x_{h}}\frac{\partial}{\partial u^{j}}=[X_{h},\frac{\partial}{\partial u^{j}}]=-X_{j}$ and $\mathcal{L}_{X_{h}}X_{j}= ad_{u}X_{j}$, one computes that the series $e^{\bar{\tau}\mathcal{L}_{X_{h}}}\frac{\partial}{\partial u^{j}}$ is
\[
e^{\bar{\tau}\mathcal{L}_{X_{h}}}\frac{\partial}{\partial u^{j}}=\frac{\partial}{\partial u^{j}}+(e^{-\bar{\tau}ad_{u(Y)}}-1)X_{j}
\]
is convergent. Hence, $e^{\bar{\tau}\mathcal{L}_{X_{h}}}\mathcal{P}^{0}$ is a well-defined distribution.

Since $\partial/\partial y^{j}= \sum_{k=1}^n H(Y)_{jk}\partial/\partial u^{k}$, to prove (1) we only need to show that, for any $j=1,\dots n$,
\[
\left(e^{\tau\mathcal{L}_{X_{h}}}\frac{\partial}{\partial u^{j}}\right)f(xe^{\tau u})=0
\]
for any $J^{\tau}$-holomorphic function $f$. But, as differential operators on functions,
\[
\left(e^{\tau\mathcal{L}_{X_{h}}}\frac{\partial}{\partial u^{j}}\right)=e^{\tau X_{h}}\frac{\partial}{\partial u^{j}}e^{-\tau X_{h}},
\]
which, since $f(xe^{\tau u})=e^{\tau X_{h}}f(x)$ and $\frac{\partial}{\partial u^{j}}f(x)=0$, proves the formula in (1).

To prove (2), note that from (\ref{tauforms}) and using $du^{j}=Hdy^{j}$, we have
\[
\frac{\partial}{\partial\tau}\Omega_{\tau}^{j}=ad_{u}^{*}\Omega_{\tau}^{j}+du^{j}.
\]
On the other hand, Cartan's formula and the Maurer--Cartan equation tell us that
\begin{gather*}
(\mathcal{L}_{X_{h}}w^{j})(X_{k})=d(w^{j}(X_{h}))(X_{k})+dw^{j}(X_{h},X_{k})\\
=du^{j}(X_{k})-\sum_{l,m}C_{lm}^{j}w^{l}\wedge w^{m}(u^{r}X_{r},X_{k})=-u^{r}C_{rk}^{j}
\end{gather*}
and that
\[
(\mathcal{L}_{X_{h}}w^{j})\left(\frac{\partial}{\partial y^{k}}\right)=du^{j}\left(\frac{\partial}{\partial y^{k}}\right)+dw^{j}\left(X_{h},\frac{\partial}{\partial y^{k}}\right)=\frac{\partial}{\partial y^{k}}(u^{j})=H_{k}^{j}
\]
whence
\[
\mathcal{L}_{X_{h}}w^{j}=ad_{u}^{*}w^{j}+du^{j}=\frac{\partial}{\partial\tau}\Omega_{\tau}^{j}.
\]
Since $\Omega_{\tau=0}^{j}=w^{j}$, it follows that $\Omega_{\tau}^{j}=e^{\tau\mathcal{L}_{X_{h}}}w^{j}$, and hence that $\Omega_{\tau}:=\Omega_{\tau}^{1}\wedge\cdots\wedge\Omega_{\tau}^{n}=e^{\tau\mathcal{L}_{x_{h}}}(w^{1}\wedge\cdots\wedge w^{n})$ as desired. From (\ref{tauforms}) it follows that $\Omega_\tau$ is analytic in $\tau$ and, in particular, the expression for $\Omega_\tau$, at every point $(x,Y)$, makes sense as a convergent analytic power series for $\tau \in \mathbb{C}^+$.
\end{proof}

\begin{cor}
The $(1,0)$-tangent bundle $\mathcal{P}^{\tau}$ of the complex structure $J^{\tau}$ is the analytic continuation to time-$(\sigma=-\bar{\tau})$ of the pushforward of $\mathcal{P}^{0}$ by the time-$\sigma$ Hamiltonian flow $\Phi_{\sigma}^{h}$ of $h$.
\end{cor}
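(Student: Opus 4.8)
The plan is to deduce this directly from Theorem \ref{thmomegatau}(1) together with the standard relation between the pushforward of a tensor by a flow and the exponential of a Lie derivative. Recall that if $\Phi_{\sigma}$ denotes the flow of a vector field $X$, then for any smooth vector field $V$ one has $(\Phi_{\sigma})_{*}V=e^{-\sigma\mathcal{L}_{X}}V$ whenever the series on the right converges; this is the integrated form of $\frac{d}{d\sigma}(\Phi_{-\sigma})_{*}V=\mathcal{L}_{X}(\Phi_{-\sigma})_{*}V$ together with the identity $(\Phi_{\sigma})^{*}=(\Phi_{-\sigma})_{*}$ for vector fields. Applying this with $X=X_{h}$ and $V=\partial/\partial u^{j}$, the frame spanning the vertical polarization $\mathcal{P}^{0}$, reduces the problem to re-reading the computation already carried out in the proof of Theorem \ref{thmomegatau}.

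First I would record the explicit frame for the real-time pushforward. By the same computation as in the proof of Theorem \ref{thmomegatau}(1), with the sole change of replacing $\bar{\tau}$ by $-\sigma$, the series
\[
e^{-\sigma\mathcal{L}_{X_{h}}}\frac{\partial}{\partial u^{j}}=\frac{\partial}{\partial u^{j}}+\left(e^{\sigma\,\mathrm{ad}_{u(Y)}}-1\right)X_{j}
\]
converges for every $\sigma$. Hence, for $\sigma\in\mathbb{R}$, the distribution $(\Phi_{\sigma}^{h})_{*}\mathcal{P}^{0}$ is spanned by the real vector fields $\partial/\partial u^{j}+(e^{\sigma\,\mathrm{ad}_{u(Y)}}-1)X_{j}$, $j=1,\dots,n$.

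Next I would observe that at each point $(x,Y)$ the coefficients of this frame are entire analytic functions of $\sigma$: they are polynomial expressions in the matrix exponential $e^{\sigma\,\mathrm{ad}_{u(Y)}}$ of the fixed real antisymmetric matrix $\mathrm{ad}_{u(Y)}$. Consequently the real-parameter family $\sigma\mapsto(\Phi_{\sigma}^{h})_{*}\mathcal{P}^{0}$ admits an unambiguous analytic continuation, obtained by continuing the frame vectors, to a family of complex $n$-dimensional subspaces of the complexified tangent space $\mathbb{C}\otimes T_{(x,Y)}(T^{*}K)$. Evaluating this continuation at $\sigma=-\bar{\tau}$ produces the frame $\partial/\partial u^{j}+(e^{-\bar{\tau}\,\mathrm{ad}_{u(Y)}}-1)X_{j}=e^{\bar{\tau}\mathcal{L}_{X_{h}}}\partial/\partial u^{j}$, which by Theorem \ref{thmomegatau}(1) spans $\mathcal{P}^{\tau}$, completing the argument.

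The only point requiring care, and the main (if modest) obstacle, is the sign bookkeeping that turns the flow parameter $\sigma$ into $-\bar{\tau}$: the pushforward by the time-$\sigma$ flow contributes $e^{-\sigma\mathcal{L}_{X_{h}}}$, whereas Theorem \ref{thmomegatau}(1) realizes $\mathcal{P}^{\tau}$ as $e^{\bar{\tau}\mathcal{L}_{X_{h}}}\mathcal{P}^{0}$, so the two distributions coincide precisely when $-\sigma=\bar{\tau}$. Beyond this, one should note that for real $\sigma$ the frame vectors are genuinely real, since $e^{\sigma\,\mathrm{ad}_{u(Y)}}$ is then a real orthogonal matrix, so the continuation deforms a real Lagrangian distribution into the complex polarization $\mathcal{P}^{\tau}$; phrasing the analytic continuation through the explicit convergent frame, rather than abstractly, is what makes the statement well-defined and sidesteps any convergence concerns, all of which have already been settled in Theorem \ref{thmomegatau}.
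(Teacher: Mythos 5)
Your proposal is correct and takes essentially the same route as the paper: both arguments identify the time-$\sigma$ pushforward of vector fields with the operator $e^{-\sigma\mathcal{L}_{X_{h}}}$ (via the derivative formula for the flow) and then invoke Theorem \ref{thmomegatau}(1) at $\sigma=-\bar{\tau}$. The only difference is that you spell out the explicit convergent frame and its analytic continuation in $\sigma$, which the paper leaves implicit in the phrase ``the result now follows from Theorem \ref{thmomegatau}(1).''
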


\begin{proof}
Since for any (real) vector field $X$,
\[
\frac{d}{dt}\Big\vert_{t=0}(\Phi_{t}^{h})_{*}X=[X,X_{h}]
\]
we see that $(\Phi_{t}^{h})_{*}X=e^{-t\mathcal{L}_{X_{h}}}X.$ The result now follows from Theorem \ref{thmomegatau}(1).
\end{proof}

\bigskip{}
As $\tau$ approaches the real boundary of $\mathbb{C}^{+}$, i.e. as $\tau_{2}\to0$, the polarizations $\mathcal{P}^{\tau}$ become real.

\begin{prop}
\label{taupol} The Hamiltonian function $h$ defines a family of real polarizations on $T^{*}K$
\[
\mathcal{P}^{t}=e^{t\mathcal{L}_{X_{h}}}\mathcal{P}^{0},\ t\in\mathbb{R}\subset\partial\mathbb{C}^{+}.
\]
\end{prop}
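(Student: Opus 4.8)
The plan is to recognize $\mathcal{P}^{t}$ as the pushforward of the vertical polarization by the (real) Hamiltonian flow of $h$, and then to deduce the three defining properties of a real polarization directly from the fact that this flow is a symplectomorphism.

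First I would record the identification
\[
\mathcal{P}^{t}=e^{t\mathcal{L}_{X_{h}}}\mathcal{P}^{0}=(\Phi_{-t}^{h})_{*}\mathcal{P}^{0},\qquad t\in\mathbb{R}.
\]
This follows from the relation $(\Phi_{t}^{h})_{*}X=e^{-t\mathcal{L}_{X_{h}}}X$ established in the proof of the preceding Corollary. Since the flow $\Phi_{t}^{h}(x,Y)=(xe^{tu},Y)$ is a real, complete, smooth diffeomorphism, the right-hand side is manifestly a smooth constant-rank distribution, which settles both the existence of $\mathcal{P}^{t}$ and the convergence of the series $e^{t\mathcal{L}_{X_{h}}}\mathcal{P}^{0}$; alternatively, one may simply set $\bar{\tau}=t$ in the explicit expression $e^{\bar{\tau}\mathcal{L}_{X_{h}}}\partial/\partial u^{j}=\partial/\partial u^{j}+(e^{-\bar{\tau}\,ad_{u(Y)}}-1)X_{j}$ from the proof of Theorem \ref{thmomegatau}, which converges for every $\tau$ because $ad_{u(Y)}$ acts on the finite-dimensional space $\mathfrak{k}$.

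Next I would verify that $\mathcal{P}^{t}$ is real, Lagrangian, and involutive, each property being inherited from $\mathcal{P}^{0}$ through the pushforward. The vertical polarization $\mathcal{P}^{0}$ is the complexification of the real vertical distribution $V=\ker d\pi$, which is Lagrangian (two vertical vectors pair to zero under $\omega^{T^{*}K}$, as is immediate from the block form \eqref{eqn:symp-block}) and involutive (it is spanned by the commuting coordinate fields $\partial/\partial y^{j}$). For reality, since $\Phi_{-t}^{h}$ is a real map its pushforward commutes with complex conjugation, so $\mathcal{P}^{t}=((\Phi_{-t}^{h})_{*}V)\otimes\mathbb{C}$ is again the complexification of a real distribution and hence $\overline{\mathcal{P}^{t}}=\mathcal{P}^{t}$. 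For the Lagrangian condition, $\Phi_{-t}^{h}$ is a symplectomorphism---being a Hamiltonian flow it preserves $\omega^{T^{*}K}$---and therefore carries Lagrangian subspaces to Lagrangian subspaces. For involutivity, any diffeomorphism satisfies $\phi_{*}[X,Y]=[\phi_{*}X,\phi_{*}Y]$, so the pushforward of an involutive distribution is involutive.

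Combining these, $\mathcal{P}^{t}$ is a real, involutive, Lagrangian distribution, i.e.\ a real polarization, for every $t\in\mathbb{R}\subset\partial\mathbb{C}^{+}$. I do not expect a genuine obstacle: the only point requiring care is the identification of the exponentiated Lie-derivative series with the geometric pushforward (and its convergence), after which all three properties follow from the single structural fact that $\Phi_{-t}^{h}$ is a real symplectomorphism. For context, this is precisely the boundary degeneration of Theorem \ref{kahler}: the positivity computation \eqref{posi} requires $\tau_{2}>0$, and as $\tau_{2}\to 0$ the complex polarization $\mathcal{P}^{\tau}$ ceases to be positive and collapses onto the real polarization $\mathcal{P}^{t}$.
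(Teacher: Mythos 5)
Your proof is correct and follows essentially the same route as the paper: identify $e^{t\mathcal{L}_{X_{h}}}$ acting on vector fields with the pushforward by the (real) Hamiltonian flow of $h$, then deduce involutivity and the Lagrangian property from the fact that this flow is a symplectomorphism. The paper's proof is just a terser version of yours; your additional checks (reality via the flow being a real map, and convergence of the exponentiated series) are details the paper leaves implicit.
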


\begin{proof}
As in the previous corollary, the operator $e^{t\mathcal{L}_{X_{h}}}$ applied to a vector field is the pushforward by the time $t$ Hamiltonian flow of $X_{h}$. This implies involutivity and also that the resulting distribution is Lagrangian.
\end{proof}

\bigskip
We turn now to half-forms and the resulting half-form-corrected K\"ahler quantization of $T^{*}K$.

For $\tau\in\mathbb{C}^{+}$, let
\[
\beta_{\tau}(Y)=\pi^{-n/4}e^{i\tau(B(u(Y),Y)-h(Y))}
\]
and define a measure $d\mu_{\tau}$ on $T^{*}K$ by
\begin{equation}
d\mu_{\tau}:=\beta_{\tau}\bar{\beta}_{\tau}|\sqrt{\Omega_{\tau}}|^{2}\frac{\omega^{n}}{n!}=e^{-\kappa(Y)}\tau_{2}^{\frac{n}{2}}\eta(\tau_{2}u)(\det H)^{\frac{1}{2}}\frac{\omega^{n}}{\pi^{n/2}n!}.\label{eq:dmu}
\end{equation}
The second equality above is a consequence of the following computation.

\begin{lem}
\label{lem:BKSnorm}For $\tau\in\mathbb{C}^{+}$,
\begin{equation}
|\sqrt{\Omega_{\tau}}|^{2}=\sqrt{\frac{\bar{\Omega}_{\tau}\wedge\Omega_{\tau}}{(2i)^{n}(-1)^{n(n-1)/2}\omega^{n}/n!}}=\tau_{2}^{\frac{n}{2}}\eta(\tau_{2}u)(\det H)^{\frac{1}{2}}.\label{halfnorm}
\end{equation}
\end{lem}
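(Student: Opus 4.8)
The plan is to collapse the whole pairing to a single determinant and then evaluate that determinant by simultaneous diagonalization. The first equality in (\ref{halfnorm}) requires no work: it is just the defining relation of the half-form pairing evaluated at $\mu=\mu'=\sqrt{\Omega_\tau}$, so that $\bar\mu^{2}\wedge(\mu')^{2}=\bar\Omega_\tau\wedge\Omega_\tau$, with the branch of the square root fixed by the positivity requirement $(\sqrt{\Omega_\tau},\sqrt{\Omega_\tau})>0$. Thus the entire content is the second equality, namely computing $\bar\Omega_\tau\wedge\Omega_\tau$ as a multiple of the Liouville form $\omega^{n}/n!$.

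To do this I would first record $\omega$ in the holomorphic coframe $\{\Omega_\tau^{j}\}$ of Lemma \ref{lem:frames}. Since $\omega$ is of type $(1,1)$ and $\{\Omega_\tau^{j}\}$ is dual to $\{Z_j^{\tau}\}$, one has $\omega=\sum_{j,k}\omega(Z_j^{\tau},\bar{Z}_k^{\tau})\,\Omega_\tau^{j}\wedge\bar\Omega_\tau^{k}$; write $N$ for the coefficient matrix. The entries of $N$ are, up to the factor $i$ and a transposition of indices, precisely those of
\[
M:=\frac{ad_Y}{2\sin(\tau_2\,ad_u)}\,e^{i\tau_2\,ad_u}
\]
appearing in (\ref{posi}), so that $\det N=i^{n}\det M$. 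The standard K\"ahler volume identity then reads $\omega^{n}/n!=\det N\;\Omega_\tau^{1}\wedge\bar\Omega_\tau^{1}\wedge\cdots\wedge\Omega_\tau^{n}\wedge\bar\Omega_\tau^{n}$, and reordering these $2n$ one-forms into $\bar\Omega_\tau\wedge\Omega_\tau$ contributes the sign $(-1)^{n(n+1)/2}$. Substituting into the first equality and gathering the powers of $i$ and $2$ with the two reordering signs $(-1)^{n(n+1)/2}$ and $(-1)^{n(n-1)/2}$ (their quotient is $(-1)^{n}$, which cancels the $(-1)^{n}$ coming from $(2i)^{n}i^{n}=2^{n}(-1)^{n}$), the whole expression collapses to $|\sqrt{\Omega_\tau}|^{2}=(2^{n}\det M)^{-1/2}$.

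It remains to evaluate $\det M$. By Lemma \ref{properties} one has $ad_Y=ad_u\,H^{-1}$ with $H$ symmetric positive and $ad_u$ commuting with $H$, so $M=H^{-1}\cdot\frac{ad_u\,e^{i\tau_2 ad_u}}{2\sin(\tau_2 ad_u)}$ with the two factors simultaneously diagonalizable. The eigenvalues of $ad_u$ on $\mathfrak{k}_{\mathbb{C}}$ are $0$ (with multiplicity equal to the rank $r$) and $\pm i\alpha(u)$ for $\alpha\in\Delta^{+}$; the scalar function $\lambda\mapsto\frac{\lambda e^{i\tau_2\lambda}}{2\sin(\tau_2\lambda)}$ takes the value $\frac{1}{2\tau_2}$ at $\lambda=0$ and contributes $\frac{\alpha(u)^{2}}{4\sinh^{2}(\tau_2\alpha(u))}$ on each pair $\pm i\alpha(u)$. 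Multiplying these and recognizing the definition $\eta(\tau_2 u)=\prod_{\alpha\in\Delta^{+}}\frac{\sinh(\tau_2\alpha(u))}{\tau_2\alpha(u)}$ gives $\det M=(\det H)^{-1}(2\tau_2)^{-n}\eta(\tau_2 u)^{-2}$, whence $(2^{n}\det M)^{-1/2}=\tau_2^{n/2}\eta(\tau_2 u)(\det H)^{1/2}$, the positive root being selected by $\eta,\det H,\tau_2>0$. This is exactly (\ref{halfnorm}).

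The conceptual steps here are short; the genuine obstacle is the bookkeeping of constants---the factors of $i$ and $2$ from the pairing normalization and from writing $\omega$ in the coframe, together with the two reordering signs---and verifying that the eigenvalue contributions of $ad_u$ reassemble into $\eta$ itself rather than a rescaling of it. To guard against sign and normalization slips I would check the final formula against the rank-one case $K=U(1)$, where there are no roots ($\eta\equiv1$), $H$ is a positive scalar, and a direct computation from (\ref{tauforms}) gives $\bar\Omega_\tau\wedge\Omega_\tau=2i\tau_2 H\,w^{1}\wedge dy^{1}$ and hence $|\sqrt{\Omega_\tau}|^{2}=\tau_2^{1/2}H^{1/2}$, in agreement with (\ref{halfnorm}).
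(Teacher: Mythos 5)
Your proof is correct and is precisely the ``direct computation'' that the paper invokes without supplying details: the reduction of the first equality to the definition of the half-form pairing, the expansion of $\omega$ in the coframe $\{\Omega_\tau^j\}$ with the coefficient matrix read off from (\ref{posi}), the sign and factor-of-$2i$ bookkeeping (which indeed collapses to $(2^n\det M)^{-1/2}$), and the evaluation of $\det M$ via the spectrum $\{0,\pm i\alpha(u)\}$ of $ad_u$ reassembling into $\tau_2^{n/2}\eta(\tau_2 u)(\det H)^{1/2}$ all check out, as does your $U(1)$ sanity check. The paper also records an alternative route---noting $\tau_1$-independence via Theorem \ref{thmomegatau}(2) and pulling back the known result for the quadratic complexifier (Proposition 2 of \cite{Florentino-Matias-Mourao-Nunes05}) by the holomorphic diffeomorphism $\alpha$---but your self-contained computation is the primary argument the paper alludes to.
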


\begin{proof}
The result follows from direct computation. As an alternative, after checking that the result is independent of $\tau_{1}$, which follows from Theorem \ref{thmomegatau}(2), one can use Proposition 2 of \cite{Florentino-Matias-Mourao-Nunes05} and pull back the result by the holomorphic (but not symplectic) diffeomorphism $\alpha$.
\end{proof}

\bigskip{}
In Section \ref{gcsts} we will need the following lemma.

\begin{lem}
\label{adinvmeasure} Let $\tau\in\mathbb{C}^{+}$. The measure $d\mu_{\tau}$ is $Ad_{K}$-invariant.
\end{lem}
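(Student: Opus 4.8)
The plan is to make the $Ad_{K}$ action on $T^{*}K\cong K\times\mathfrak{k}$ explicit and then verify that each factor in the second expression for $d\mu_{\tau}$ in \eqref{eq:dmu} is invariant. First I would record that the relevant action is the cotangent lift of conjugation on the base, namely, for $a\in K$,
\[
\Psi_{a}:(x,Y)\longmapsto(axa^{-1},Ad_{a}Y).
\]
Because $\Psi_{a}$ is the cotangent lift of the diffeomorphism $x\mapsto axa^{-1}$ of $K$, it is automatically a symplectomorphism and hence preserves the Liouville measure $\omega^{n}/n!=dx\,dY$. Alternatively one checks this directly: $dx$ is bi-invariant Haar measure and so is conjugation-invariant, while $dY$ is invariant under $Y\mapsto Ad_{a}Y$ because $Ad_{a}$ is $B$-orthogonal and thus has $|\det Ad_{a}|=1$.

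It then remains to show that the scalar density
\[
\pi^{-n/2}\,\tau_{2}^{n/2}\,\eta(\tau_{2}u(Y))\,(\det H(Y))^{1/2}\,e^{-\kappa(Y)}
\]
is invariant under $Y\mapsto Ad_{a}Y$. The factors $\pi^{-n/2}$ and $\tau_{2}^{n/2}$ are constant. For the remaining factors I would invoke Lemma \ref{properties}: the equivariance $u(Ad_{a}Y)=Ad_{a}u(Y)$, together with the $Ad$-invariance of $B$ and of $h$, shows at once that $\kappa(Y)=2\tau_{2}(B(Y,u(Y))-h(Y))$ is unchanged, and that $\eta(\tau_{2}u(Y))$ is unchanged since $\eta$ is $Ad$-invariant and $\tau_{2}u(Ad_{a}Y)=Ad_{a}(\tau_{2}u(Y))$. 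For the Jacobian factor $(\det H)^{1/2}$ I would differentiate the identity $h(Ad_{a}Y)=h(Y)$ twice in $Y$: the first differentiation reproduces the equivariance of $u$, and the second yields the covariance law $H(Ad_{a}Y)=Ad_{a}H(Y)Ad_{a}^{-1}$. Conjugation preserves determinants, so $\det H$ is $Ad_{K}$-invariant.

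Combining these observations, every factor of $d\mu_{\tau}$ is preserved by $\Psi_{a}$, which proves $Ad_{K}$-invariance. The only mildly delicate point is pinning down the correct lifted action and establishing the transformation rule $H(Ad_{a}Y)=Ad_{a}H(Y)Ad_{a}^{-1}$ for the Hessian; once this covariance is in hand, every other ingredient is manifestly assembled from $Ad$-invariant data, so I expect no substantive obstacle.
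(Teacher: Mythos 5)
Your proof is correct and follows essentially the same route as the paper: both arguments reduce the claim to the equivariance statements of Lemma \ref{properties}, differentiating $u(Ad_{a}Y)=Ad_{a}u(Y)$ (equivalently, $h(Ad_{a}Y)=h(Y)$ twice) to get the covariance $H(Ad_{a}Y)=Ad_{a}H(Y)Ad_{a}^{-1}$ and hence the invariance of $\det H$, with the invariance of $\kappa$ coming from the equivariance of $u$. Your write-up is in fact slightly more complete than the paper's, since you also spell out the invariance of the Liouville measure and of the factor $\eta(\tau_{2}u)$, which the paper leaves implicit.
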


\begin{proof}
From Lemma \ref{properties}, differentiating once more with respect to $y$, one obtains $Ad_{K}$-equivariance of $H$, that is $\forall g\in K,\ H(Ad_{g}Y)=Ad_{g}H(Y)$, where here we interpret $H$ as a linear operator $\mathfrak{k}\to\mathfrak{k}.$ Therefore, $\det H$ is $Ad_{K}$-invariant. On the other hand, the equivariance of $u(Y)$ gives that $\kappa_{\tau}(Y)$ is also $Ad_{K}$-invariant.
\end{proof}

\bigskip
Let us now examine the $\mathcal{P}^{\tau}$-polarized sections of the prequantum line bundle $L\rightarrow T^{*}K$. Recall that we have trivialized $L$ with respect to the symplectic potential $\theta$. In particular, sections of $L$ are (complex-valued) functions on $T^{*}K\simeq K\times\mathfrak{k}$ and the covariant derivative is given by $\nabla^{L}=d+i\theta$.

\begin{thm}
\label{tautwopositive} Let $\tau\in\mathbb{C}^{+}$. Then a trivialized section of $L\otimes\sqrt{\mathcal{K}^{\mathcal{P^{\tau}}}}$ is covariantly constant along the polarization $\overline{\mathcal{P}^{\tau}}$ if and only if it is of the form
\[
f(xe^{\tau u})\beta_{\tau}(Y)\otimes\sqrt{\Omega_{\tau}},
\]
for some $J^{\tau}$-holomorphic function $f$. Moreover, if $\rho\in\hat{K}$ is an irreducible representation of $K$ and $A$ is an endomorphism of $V_{\rho}$, Then
\[
Tr(A\rho(xe^{\tau u}))
\]
is square integrable with respect to $d\mu_{\tau}$. 
\end{thm}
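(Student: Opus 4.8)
The plan is to prove the two assertions separately: first the characterization of the polarized sections, which is a pointwise differential-geometric computation, and then the square-integrability, which combines Schur orthogonality on $K$ with a growth estimate in the fibre.

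For the first part I would begin with Theorem \ref{thmomegatau}(2), which exhibits $\Omega_{\tau}$ as a global nowhere-vanishing holomorphic trivialization of $\mathcal{K}^{\mathcal{P}^{\tau}}$; hence $\sqrt{\Omega_{\tau}}$ trivializes $\sqrt{\mathcal{K}^{\mathcal{P}^{\tau}}}$ and is covariantly constant along $\overline{\mathcal{P}^{\tau}}$ for the partial connection defining the half-form quantization. Consequently every section of $L\otimes\sqrt{\mathcal{K}^{\mathcal{P}^{\tau}}}$ can be written $s\otimes\sqrt{\Omega_{\tau}}$ with $s$ a function, and since $\nabla^{L}=d+i\theta$, the condition of being covariantly constant along $\overline{\mathcal{P}^{\tau}}$ becomes $(\bar{Z}+i\theta(\bar{Z}))s=0$ for every $\bar{Z}\in\overline{\mathcal{P}^{\tau}}$. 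Writing $s=F\beta_{\tau}$ and using $\bar{Z}F=0$ for $J^{\tau}$-holomorphic $F$, this reduces to the single scalar identity $\bar{Z}(\log\beta_{\tau})+i\theta(\bar{Z})=0$. I would verify it on the frame $\bar{Z}_{j}^{\tau}$: since $\log\beta_{\tau}=\mathrm{const}+i\tau(B(u,Y)-h)$ and $B(u,Y)-h$ is real, conjugating the two identities $Z_{j}^{\tau}(B(u,Y)-h)=-\tfrac{i}{2\tau_{2}}y^{j}$ and $\theta(Z_{j}^{\tau})=(\tfrac{1}{2}+\tfrac{i\tau_{1}}{2\tau_{2}})y^{j}$ from the proof of Theorem \ref{kahler} gives $\bar{Z}_{j}^{\tau}(\log\beta_{\tau})=-\tfrac{\tau}{2\tau_{2}}y^{j}$ and $i\theta(\bar{Z}_{j}^{\tau})=\tfrac{\tau}{2\tau_{2}}y^{j}$, which cancel. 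This is the computational heart of the claim. For the converse, given any polarized section I would divide by the nowhere-vanishing $\beta_{\tau}\otimes\sqrt{\Omega_{\tau}}$; the same identity forces the quotient $F$ to satisfy $\bar{Z}F=0$, so $F$ is $J^{\tau}$-holomorphic, and restricting $F$ to the zero section $K$ and invoking Theorem \ref{analyticcont} identifies $F$ with $f(xe^{\tau u})=e^{\tau X_{h}}f$ for $f=F|_{K}$.

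For the second part I would first write $\rho(xe^{\tau u})=\rho(x)e^{\tau\rho(u)}$ and carry out the $x$-integration by Schur orthogonality. Since $Tr(A\rho(x)M)=Tr(MA\,\rho(x))$, orthogonality of matrix coefficients gives $\int_{K}|Tr(A\rho(x)M)|^{2}\,dx=\tfrac{1}{\dim V_{\rho}}Tr(AA^{*}M^{*}M)$. With $M=e^{\tau\rho(u)}$ and $\rho(u)$ skew-Hermitian (because $\rho|_{K}$ is unitary and $u\in\mathfrak{k}$), one has $M^{*}M=e^{-\bar{\tau}\rho(u)}e^{\tau\rho(u)}=e^{2i\tau_{2}\rho(u)}$, a positive-definite operator. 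Feeding this and the explicit form (\ref{eq:dmu}) of $d\mu_{\tau}$ into the definition, together with Lemma \ref{adinvmeasure} and the factorization $\beta_{\tau}\bar{\beta}_{\tau}=\pi^{-n/2}e^{-\kappa}$, reduces the whole norm to a single fibre integral over $\mathfrak{k}$ of the form
\[
\frac{\tau_{2}^{n/2}}{\pi^{n/2}\dim V_{\rho}}\int_{\mathfrak{k}}Tr\!\left(AA^{*}e^{2i\tau_{2}\rho(u(Y))}\right)e^{-\kappa(Y)}\,\eta(\tau_{2}u(Y))\,(\det H(Y))^{1/2}\,dY.
\]

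It remains to show this fibre integral converges, and this is where the main work lies. Since $e^{2i\tau_{2}\rho(u)}$ is positive, I would bound $Tr(AA^{*}e^{2i\tau_{2}\rho(u)})\leq\|A\|^{2}\,Tr(e^{2i\tau_{2}\rho(u)})$, a sum of the eigenvalues $e^{-2\tau_{2}\mu(u)}$ over the weights $\mu$ of $\rho$; hence both this factor and $\eta(\tau_{2}u)$ grow at most like $e^{c\|u\|}$ for a constant $c$ depending on $\rho$, $\tau_{2}$ and the highest weight. The decisive point is that $\kappa(Y)=2\tau_{2}\big(B(Y,u(Y))-h(Y)\big)=2\tau_{2}\,h^{*}(u(Y))$ is $2\tau_{2}$ times the Legendre transform of $h$: because $\alpha$ is a proper diffeomorphism of $\mathfrak{k}$ with $\nabla h^{*}(u)=Y(u)$ and $\|Y(u)\|\to\infty$ as $\|u\|\to\infty$, the convex function $h^{*}$ is superlinear, so $e^{-\kappa}$ decays faster than any $e^{-R\|u\|}$ and dominates the linear-exponential growth of the integrand. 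The one remaining nuisance is the Jacobian factor $(\det H)^{1/2}$; I expect this to be controllable because $\log\det H$ is of lower order than $\kappa$ (in the quadratic model $h=\tfrac{1}{2}|Y|^{2}$ one has $H=\mathbf{1}$, $h^{*}=\tfrac{1}{2}|u|^{2}$, and the integral is an honest Gaussian against $\eta$, recovering Hall's averaged heat-kernel computation), and I would make this precise either by a direct estimate of the form $\det H\leq e^{\varepsilon\kappa}$ or by changing variables to $u$ and comparing with the Gaussian measure pulled back through $\alpha$ as in the alternative proof of Lemma \ref{lem:BKSnorm}. The superlinear decay of $e^{-\kappa}$ against the at-most-linear-exponential growth of the trace and of $\eta$ is the crux, and establishing it cleanly under hypotheses (\ref{eq:3props}) is the part I expect to require the most care.
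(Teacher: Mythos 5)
Your treatment of the polarized-section characterization is essentially identical to the paper's: the paper also trivializes against $\beta_{\tau}\otimes\sqrt{\Omega_{\tau}}$, writes covariant constancy on the frame as $\bar{Z}_{j}^{\tau}\sigma+i\theta(\bar{Z}_{j}^{\tau})\sigma=0$, and invokes the identity $\theta(\bar{Z}_{j}^{\tau})=-\tau\bar{Z}_{j}^{\tau}\left(B(u(Y),Y)-h(Y)\right)$ from the proof of Theorem \ref{kahler}; your cancellation of $\bar{Z}_{j}^{\tau}(\log\beta_{\tau})$ against $i\theta(\bar{Z}_{j}^{\tau})$ is exactly that identity, so there is nothing to add there.

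On square integrability your route genuinely differs from the paper's, mostly to your advantage. The paper does not perform the $K$-integration exactly: it quotes Hall's pointwise bound $|Tr(A\rho(xe^{\tau u}))|\leq c_{0}e^{c_{1}\tau_{2}\|u\|}$, changes variables $Y\mapsto u$ in the fibre, and then asserts a Gaussian estimate $e^{-\kappa}\leq b_{0}e^{-b_{1}\tau_{2}\|u\|^{2}}$, proved by writing $a(u)=2(B(u(Y),Y)-h)$ as a double integral of $H^{-1}$ and bounding its smallest eigenvalue. Your Schur-orthogonality computation plus the weight bound is correct and more self-contained, and your convex-duality step (recognizing $\kappa=2\tau_{2}h^{*}(u(Y))$ and using that $h^{*}$ is superlinear merely because $h$ is finite and convex, via $h^{*}(u)\geq R\|u\|-\max_{\|Y\|=R}h(Y)$) is in fact more robust than the paper's: the paper's Gaussian lower bound on $\kappa$ tacitly requires the smallest eigenvalue of $H^{-1}$ to stay bounded away from zero, i.e.\ upper control on $H$, which is not among hypotheses \eqref{eq:3props}. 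For instance, for $K=S^{1}$ and $h(y)=\cosh y$ one gets $h^{*}(u)\sim|u|\log|u|$, superlinear but not quadratic, so only your form of the decay estimate survives in this generality --- and it still dominates the $e^{c\|u\|}$ growth of the trace factor and of $\eta(\tau_{2}u)$.

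The one genuine issue is your unresolved $(\det H)^{1/2}$ factor, and of your two proposed fixes only the second works. The pointwise domination $\det H\leq e^{\varepsilon\kappa}$ is false under \eqref{eq:3props}: the Hessian is controlled only from below, so (again for $K=S^{1}$) one can build a smooth convex $h$ with $h''\geq1$, $h(y)=\tfrac{1}{2}y^{2}+O(|y|)$ and $h'(y)=y+O(1)$ --- add to $\tfrac{1}{2}y^{2}$ convex bumps near $y=n$ of width $e^{-2n^{3}}$ and second derivative of size $e^{n^{3}}$ --- so that $\kappa(n)=\tau_{2}n^{2}+O(n)$ while $\det H(n)\approx e^{n^{3}}$, defeating $e^{\varepsilon\kappa}$ for every fixed $\varepsilon$. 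Your second option is the correct one and is precisely the paper's device: substituting $u$ for $Y$ produces the Jacobian $(\det H)^{-1}$, which combines with the $(\det H)^{1/2}$ in $d\mu_{\tau}$ to give the bounded density $(\det H)^{-1/2}$ (bounded since the hypotheses, as the paper uses them, amount to $H\geq m\mathbf{1}$, whence $\det H\geq m^{n}$). With that substitution made, your superlinear decay of $e^{-2\tau_{2}h^{*}(u)}$ against the at-most-linear exponential growth of the remaining factors closes the proof.
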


\begin{proof}
A trivialized section $\sigma\in\Gamma(L)\simeq C^{\infty}(K\otimes\mathfrak{k})$ is covariantly constant along $\overline{\mathcal{P}^{\tau}}$ if and only if for $j=1,...,n$,
\[
\bar{Z}_{j}^{\tau}\sigma+i\theta(\bar{Z}_{j}^{\tau})\sigma=0.
\]
From the proof of Theorem \ref{kahler}, we have $\theta(\bar{Z}_{j}^{\tau})=-\tau\bar{Z}_{j}^{\tau}(B(u(Y),Y)-h(Y))$, so that the equations of covariant constancy have solutions of the form
\[
f(xe^{\tau u})\beta_{\tau}(Y),
\]
with $f$ holomorphic. This proves the first part. To prove the second claim, recall that (see, for example, \cite{Hall94}), given $\rho\in\hat{K}$ and $A\in\mathrm{End}(V_{\rho})$, there exist ($\rho$-dependent) constants $c_{0},c_{1}>0$ such that
\begin{equation}
|Tr(A\rho(xe^{\tau u}))|\leq c_{o}e^{c_{1}\tau_{2}||u||}.\label{normpi}
\end{equation}
If we change variables in the computation of the $L^{2}$-norm of $Tr(A\rho(xe^{\tau u}))$ with respect to $d\mu_{\tau}$, an additional factor of $(\det H)^{-1}$ appears from the Jacobian. Together with the $(\det H)^{\frac{1}{2}}$ factor in $d\mu_{\tau}$, this gives a bounded function $(\det H)^{-\frac{1}{2}}$ which does not affect integrability. On the other hand, let $a(u)=2(B(u(Y),Y)-h(Y(u)))$, so that $\frac{\partial a}{\partial u^{j}}=2y^{j}$. Let $Y_{0}$ be the unique value of $Y$ such that $u(Y_{0})=0$. One has $a(0)=-2h(Y_{0})$ and
\begin{align*}
a(u)&=a(0)+2\int_{0}^{1}B(u(Y),Y(tu))\,dt\\
&=a(0)+2B(u(Y),Y_{0})+2\int_{0}^{1}\int_{0}^{1}B(u(Y),H^{-1}(stu)ut)\, dtds.
\end{align*}
Let $\lambda_{m}(stu)>0$ be the minimum eigenvalue of the symmetric positive definite matrix $H^{-1}(stu)$. We then have
\[
a(u)\geq a(0)+2B(u(Y),Y_{0})+2\left(\int_{0}^{1}\int_{0}^{1}\lambda_{m}(stu)tdtds\right)||u||^{2}.
\]
Therefore, there exist constants $b_{0},b_{1}>0$ such that
\[
e^{-\kappa(Y)}=e^{-\tau_{2}a(u)}\leq b_{0}e^{-b_{1}\tau_{2}||u||^{2}}.
\]
This, together with (\ref{normpi}), implies that $Tr(A\rho(xe^{\tau u}))\in L^{2}(T^{*}K,d\mu_{\tau})$.
\end{proof}

\bigskip
Following the usual prescription of half-form quantization with respect to a K\"ahler polarization, we have the following definition. 

\begin{defn}
\label{hilberts} Let $\tau\in\mathbb{C}^{+}$. The Hilbert space $\mathcal{H}_{\tau}\cong L_{\mbox{hol}}^{2}(T^{*}K,d\mu_{\tau})$ of $\mathcal{P}^{\tau}$-polarized square-integrable half-form corrected sections is the norm completion of the space of finite linear combinations of sections of the form 
\begin{equation}
Tr(A\rho(xe^{\tau u}))\beta_{\tau}(Y)\otimes\sqrt{\Omega_{\tau}},\label{wave}
\end{equation}
where $\rho\in\hat{K}$ and $A$ is an endomorphism of $V_{\rho}$. The inner product of two such sections is given by
\begin{eqnarray}
 & \langle Tr(A\rho(xe^{\tau u}))\beta_{\tau}(Y)\otimes\sqrt{\Omega_{\tau}},Tr(A^{\prime}\rho^{\prime}(xe^{\tau u})))\beta_{\tau}(Y)\otimes\sqrt{\Omega_{\tau}}\rangle_{\tau}=\notag\label{innerprod}\\
 & \int_{T^{*}K}\overline{Tr(A\rho(xe^{\tau u}))}Tr(A^{\prime}\rho^{\prime}(xe^{\tau u}))d\mu_{\tau}.
\end{eqnarray}
\end{defn}

\begin{rem}
We will show in Proposition \ref{admissible} that the wave functions (i.e. sections) in $\mathcal{H}_{\tau}$ corresponding to matrix elements of irreducible representations of $K$ actually form an orthogonal basis.
\end{rem}

\bigskip{}
Let us now describe the Hilbert spaces of polarized sections for the polarizations $\mathcal{P}^{t}$, $t\in\mathbb{R}\subset\partial\mathbb{C}^{+}$. Notice that solutions of the equations of covariant constancy are still of the form (\ref{wave}), but now with $t\in\mathbb{R}\subset\partial\mathbb{C}^{+}$. Since we are using half-forms, the BKS pairing (defined for nontransverse polarizations in \cite[(p. 187,231)]{Woodhouse}) induces a canonical inner product on the spaces of $\mathcal{P}^{t}$-polarized sections of $L\otimes\sqrt{\mathcal{K}^{\mathcal{P}^{t}}}$. We will now show that the inner product structure on these spaces of covariantly constant sections coincides with the inner product defined by continuity from the inner products of $\mathcal{H}_{\tau},\ \tau\in\mathbb{C}^{+}$. As we will see, this will allow us to smoothly extend the quantum Hilbert bundle $\mathcal{H}\rightarrow\mathbb{C}^{+}$ (whose fiber at $\tau\in\mathbb{C}^{+}$ is $\mathcal{H}_{\tau}$) to $\mathbb{C}^{+}\cup\mathbb{R}$ by attaching the real-polarized quantum Hilbert spaces to the points in $\mathbb{R}\subset\partial\mathbb{C}^{+}$.

We describe now the limit inner product in $\mathcal{H}_{t\in\mathbb{R}}$ directly in terms of matrix elements of irreducible representations of $K$. 
\begin{prop}
\label{real} Let $\tau\in\mathbb{C}^{+}$, $\rho,\rho^{\prime}\in\hat{K}$, $i,j=1,\dots,\dim\rho,\ k,l=1,\dots,\dim\rho^{\prime}$ and
\[
\sigma_{\tau}=\rho(xe^{\tau u})_{ij}\beta_{\tau}(Y)\otimes\sqrt{\Omega_{\tau}},\sigma_{\tau}^{\prime}=\rho^{\prime}(xe^{\tau u})_{kl}\beta_{\tau}(Y)\otimes\sqrt{\Omega_{\tau}}.
\]
Then,
\begin{enumerate}
\item the inner product $\langle\sigma_{\tau},\sigma_{\tau}^{\prime}\rangle_{\tau}$
is independent of $\tau_{1}$,
\item the inner product $\langle\sigma_{\tau},\sigma_{\tau}^{\prime}\rangle_{\tau}$
has a finite limit as $\tau_{2}\to0$, and
\item the limit in (2) is given by the inner product defined on the Hilbert
space of states for the vertical polarization ($\tau=0$) \cite{Hall94,Florentino-Matias-Mourao-Nunes06}
\[
\lim_{\tau_{2}\to0}\langle\sigma_{\tau},\sigma_{\tau}^{\prime}\rangle_{\tau}=\langle\rho(x)_{ij}\otimes\sqrt{\Omega_{0}},\rho^{\prime}(x)_{kl}\otimes\sqrt{\Omega_{0}}\rangle_{0}=\langle\rho(x)_{ij},\rho^{\prime}(x)_{kl}\rangle_{L^{2}(K,dx)}=\frac{\delta_{\rho\rho^{\prime}}}{\dim\rho}\delta_{ik}\delta_{jl}.
\]

\end{enumerate}
\end{prop}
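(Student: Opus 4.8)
The plan is to do the $x$-integration first, collapsing everything to a scalar integral over $\mathfrak{k}$, and then to read off the $\tau_2\to0$ limit by a concentration argument.

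For part (1) I would write $\rho(xe^{\tau u})=\rho(x)\,e^{\tau\rho_{*}(u)}$, where $\rho_{*}$ is the induced Lie-algebra representation (so $\rho_{*}(u)$ is skew-Hermitian, since $\rho|_{K}$ is unitary). Because the measure $d\mu_{\tau}$ in \eqref{eq:dmu} is the product of Haar measure $dx$ with a density on $\mathfrak{k}$ depending only on $Y$ (through $\kappa$, $\eta(\tau_{2}u)$, $\tau_{2}^{n/2}$ and $\det H$), Fubini lets me integrate over $x$ pointwise in $Y$. Schur orthogonality collapses this inner integral to $\frac{\delta_{\rho\rho'}\delta_{ik}}{\dim\rho}\bigl(A^{\dagger}A\bigr)_{jl}$ with $A=e^{\tau\rho_{*}(u)}$, and since $\rho_{*}(u)$ is skew-Hermitian, $A^{\dagger}A=e^{-\bar\tau\rho_{*}(u)}e^{\tau\rho_{*}(u)}=e^{2i\tau_{2}\rho_{*}(u)}$. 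Both this matrix and the remaining $Y$-density are manifestly independent of $\tau_{1}$, which proves (1).

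For parts (2)--(3) the inner product becomes, after the $x$-integration,
\[
\langle\sigma_{\tau},\sigma_{\tau}'\rangle_{\tau}=\frac{\delta_{\rho\rho'}\delta_{ik}}{\dim\rho}\,\frac{\tau_{2}^{n/2}}{\pi^{n/2}}\int_{\mathfrak{k}}\bigl(e^{2i\tau_{2}\rho_{*}(u)}\bigr)_{jl}\,e^{-\kappa}\,\eta(\tau_{2}u)\,(\det H)^{1/2}\,dY.
\]
I would change variables $Y\mapsto u=u(Y)$ (the map $\alpha$, with $dY=(\det H)^{-1}du$, turning the density into $(\det H)^{-1/2}$) and use $\kappa=2\tau_{2}(B(Y,u)-h)=:\tau_{2}a(u)$ from Theorem \ref{kahler}; rescaling $Z=\tau_{2}u$ then yields
\[
\langle\sigma_{\tau},\sigma_{\tau}'\rangle_{\tau}=\frac{\delta_{\rho\rho'}\delta_{ik}}{\dim\rho}\int_{\mathfrak{k}}\bigl(e^{2i\rho_{*}(Z)}\bigr)_{jl}\,\eta(Z)\;\nu_{\tau_{2}}(dZ),\qquad \nu_{\tau_{2}}(dZ)=\tfrac{1}{(\pi\tau_{2})^{n/2}}e^{-\tau_{2}a(Z/\tau_{2})}(\det H)^{-1/2}\,dZ.
\]
The prefactor $\bigl(e^{2i\rho_{*}(Z)}\bigr)_{jl}\eta(Z)$ is continuous and equals $\delta_{jl}$ at $Z=0$ (as $\eta(0)=1$). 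Thus (2) and (3) reduce to showing that $\nu_{\tau_{2}}$ converges weakly to the unit point mass $\delta_{0}$ as $\tau_{2}\to0$: the limit is then $\delta_{jl}$, and by Schur orthogonality this is exactly the $L^{2}(K,dx)$ inner product of the matrix elements claimed in (3), with finiteness in (2) a byproduct.

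The main obstacle is the unit-mass concentration. Escape of mass is routine: Theorem \ref{tautwopositive} supplies the Gaussian bound $e^{-\kappa}=e^{-\tau_{2}a(u)}\le b_{0}e^{-b_{1}\tau_{2}\|u\|^{2}}$, which together with the boundedness of $(\det H)^{-1/2}$ gives $\nu_{\tau_{2}}(\{\|Z\|>\varepsilon\})\to0$ for every $\varepsilon>0$ and dominates the exponentially growing factor $\eta(Z)$ on the tails. The delicate point is the normalization $\nu_{\tau_{2}}(\mathfrak{k})\to1$: the mass of $\nu_{\tau_{2}}$ sits at scale $\|u\|\sim\tau_{2}^{-1/2}$, so this limit is governed by the behaviour at infinity of the Legendre dual $a=2h^{*}$ and of $\det H$, and it is precisely here that the Hessian hypotheses \eqref{eq:3props} are essential (the $(\det H)^{-1/2}$ produced by $\alpha$ compensates the non-Euclidean growth of the sublevel sets of $a$). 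I expect the cleanest route is to identify $(\det H)^{-1/2}du$ with the Hessian-metric volume $\sqrt{\det\nabla^{2}h^{*}}\,du$ and compare with the exactly-Gaussian model complexifier $h=\tfrac12|Y|^{2}$ of \cite{Florentino-Matias-Mourao-Nunes06}, where the normalization is an exact Gaussian integral equal to $1$, controlling the discrepancy by dominated convergence.
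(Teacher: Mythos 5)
Your part (1) is correct and is exactly the paper's argument: integrate over $K$ first via Schur orthogonality, then use unitarity of $\rho|_{K}$ to write $\rho(e^{\tau u})^{\dagger}\rho(e^{\tau u})=\rho(e^{2i\tau_{2}u})$, so that only $\tau_{2}$ survives. Your reduction of (2)--(3) is also precisely the paper's setup: the change of variables $Y\mapsto u$ with Jacobian $(\det H)^{-1}$, the identity $\kappa=\tau_{2}a(u)$ with $a(u)=2(B(u,Y)-h)$, and the rescaling $Z=\tau_{2}u$ reproduce, in different notation, the paper's scaling identity $I(a,\tau_{2})=a^{-n}I(1,\tau_{2})$ evaluated at $a=\tau_{2}$.

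However, there is a genuine gap exactly at the point you yourself flag as delicate: you never prove the normalization $\nu_{\tau_{2}}(\mathfrak{k})\to1$; you only state that you \emph{expect} a comparison with the Gaussian model to work. This normalization is the entire analytic content of (3). The localization facts you do establish (vanishing of mass on $\{\|Z\|>\varepsilon\}$ and domination of the exponentially growing $\eta$ by the Gaussian bound of Theorem \ref{tautwopositive}) only show that any limit of $\langle\sigma_{\tau},\sigma_{\tau}^{\prime}\rangle_{\tau}$ equals $\frac{\delta_{\rho\rho^{\prime}}\delta_{ik}}{\dim\rho}\,\delta_{jl}\cdot\lim_{\tau_{2}\to0}\nu_{\tau_{2}}(\mathfrak{k})$, and a soft dominated-convergence argument cannot produce that last limit, because the densities of $\nu_{\tau_{2}}$ form a delta-sequence (they blow up like $\tau_{2}^{-n/2}$ on a ball of radius $\sim\sqrt{\tau_{2}}$ around $Z=\tau_{2}u_{0}$, so there is no nontrivial pointwise limit in $Z$ to dominate). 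What closes this gap in the paper is Laplace's approximation: $a(u)$ has a unique critical point $u_{0}$ (where $Y(u_{0})=0$, since $\partial a/\partial u^{j}=2y^{j}$), its Hessian there is $2H^{-1}(0)$, and the resulting Gaussian fluctuation integral contributes exactly $(\pi\tau_{2})^{n/2}(\det H(0))^{1/2}$, cancelling the prefactor $(\pi\tau_{2})^{-n/2}$ and the density factor $(\det H)^{-1/2}$; the limit is then $\rho_{jl}(e^{2i\tilde{u}_{0}})\eta(\tilde{u}_{0})\to\delta_{jl}$ as the rescaled critical point $\tilde{u}_{0}=\tau_{2}u_{0}\to0$, with subleading Laplace terms controlled by \cite{Kirwin08}. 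To repair your proof you must carry out this second-order expansion, e.g.\ by the further rescaling $Z=\tau_{2}u_{0}+\sqrt{\tau_{2}}\,W$, proving pointwise convergence of the rescaled density to $\pi^{-n/2}(\det H(0))^{-1/2}e^{-B(W,H^{-1}(0)W)}$ and dominating it with your Gaussian bound; that is Laplace's method, not a comparison one can wave through. Note finally that the paper proves (2) \emph{before} and independently of (3), by the domination $e^{-\kappa}\leq b_{0}e^{-b_{1}\tau_{2}\|u\|^{2}}$, which bounds the norm for general $h$ by the known, bounded norm in the quadratic case $h=\frac{1}{2}|Y|^{2}$ of \cite{Hall94,Florentino-Matias-Mourao-Nunes06}; in your proposal (2) is contingent on the unproven mass statement, so it inherits the same gap.
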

\begin{proof}
To prove (1), let us perform the integration over $K$ in (\ref{innerprod}), where we take $TrA\rho=\rho_{ij},\ TrA^{\prime}\rho^{\prime}=\rho_{kl}^{\prime}$. We have, from Weyl's orthogonality conditions,
\[
\int_{K}\overline{\rho_{ia}(x)}\rho_{kb}^{\prime}(x)dx=\frac{\delta_{\rho\rho^{\prime}}}{d_{\rho}}\delta_{ab}\delta_{ik}.
\]
Therefore, in the integral over the fibers the only $\tau_{1}$ dependence is in the factor
\[
\overline{\rho_{aj}(e^{\tau u})}\rho_{al}(e^{\tau u})=\sum_{r=1}^{d_{\rho}}\overline{\rho_{rj}(e^{i\tau_{2}u})}\rho_{rl}(e^{i\tau_{2}u})=\rho_{jl}(e^{2i\tau_{2}u}),
\]
since $\rho$ is unitary. This proves that $\langle\sigma_{\tau},\sigma_{\tau}^{\prime}\rangle_{\tau}$ is independent of $\tau_{1}$.

To prove (2), recall from the proof of Theorem \ref{tautwopositive} that
\[
e^{-\kappa(Y)}\leq b_{0}e^{-b_{1}\tau_{2}||u||^{2}},
\]
so that the norm of $\sigma_{\tau}$ for a given choice of Hamiltonian function $h$, satisfying the above conditions, is bounded by the norm of the corresponding $\sigma_{b_{1}\tau}$ in the case $h(Y)=\frac{1}{2}|Y|^{2}$. Since in this case the norm of $\sigma_{\tau}$ is bounded as $\tau_{2}\to0$ (see \cite{Hall94,Florentino-Matias-Mourao-Nunes06}) the result follows. To prove (3), let $a>0$ and consider the integral
\[
I(a,\tau_{2})=\pi^{-\frac{n}{2}}\int_{\mathfrak{k}}\rho_{jl}(e^{2iau})e^{-2a^{2}\frac{B(u,Y)-h}{\tau_{2}}}\tau_{2}^{\frac{n}{2}}\eta(\tau_{2}u)(\det H)^{\frac{1}{2}}dY.
\]
After changing variables $\tilde{u}=au$ and with $u=aY$, $\tilde{h}=a^{2}h$, we obtain
\[
I(a,\tau_{2})=\pi^{-\frac{n}{2}}\int_{\mathfrak{k}}\rho_{jl}(e^{2i\tilde{u}})e^{-2\frac{B(u,\tilde{u})-\tilde{h}}{\tau_{2}}}\tau_{2}^{\frac{n}{2}}\eta(\tilde{u})(\det H)^{-\frac{1}{2}}a^{-n}d\tilde{u}=a^{-n}I(1,\tau_{2}).
\]
In the limit $\tau_{2}\to0$, $I(1,\tau_{2})$ can be evaluated by Laplace's approximation. The minimum of $B(u,\tilde{u})-\tilde{h}$ is at $\tilde{u}_{0}$, where $u(\tilde{u}_{0})=0$. The Hessian is given by $H^{-1}(0)$. Laplace's approximation gives, to leading order as $\tau_{2}\to0$,
\[
\lim_{\tau_{2}\to0}I(1,\tau_{2})=\tau_{2}^{n}\rho(e^{2i\tilde{u}_{0}})_{jl}\eta(\tilde{u}_{0}).
\]
Therefore, setting now $a=\tau_{2}\to0$, we obtain $\tilde{u}_{0}\to0$ and $\lim_{\tau_{2}\to0}I(\tau_{2},\tau_{2})=\delta_{jl}$ so that
\[
\lim_{\tau_{2}\to0}\langle\sigma_{\tau},\sigma_{\tau}^{\prime}\rangle_{\tau}=\frac{\delta_{\rho\rho^{\prime}}}{d_{\rho}}\delta_{jl}\delta_{ik},
\]
as desired. Note that the subleading terms in the Laplace approximation \cite{Kirwin08} do not contribute, as they go to zero faster than $\tau_{2}^{n}.$
\end{proof}

\begin{defn}
\label{boundaryhilberts} Let $t\in\mathbb{R}\subset \partial \mathbb{C}^{+}$. The Hilbert space $\mathcal{H}_{t}$ of $\mathcal{P}^{t}$-polarized half-form corrected sections is the norm completion of the space of finite linear combinations of sections of the form
\[
Tr(A\rho(xe^{tu}))\beta_{t}(Y)\otimes\sqrt{\Omega_{t}},
\]
where $\rho\in\hat{K}$ and $A$ is an endomorphism of $V_{\rho}$. The inner product of two such sections is
\[
\langle\rho_{ij}(xe^{tu})\beta_{t}(Y)\otimes\sqrt{\Omega_{t}},\rho_{kl}^{\prime}(xe^{tu})\beta_{t}(Y)\otimes\sqrt{\Omega_{t}}\rangle_{t}=\frac{\delta_{\rho\rho^{\prime}}}{\dim\rho}\delta_{ik}\delta_{jl}.\notag
\]
\end{defn}

It follows that there is a bundle of Hilbert spaces, $\mathcal{H}\to\mathbb{C}^{+}\cup\mathbb{R}$, with fiber $\mathcal{H}_{\tau}$ over $\tau$, with continuous hermitian structure.

\section{Generalized CSTs from geometric quantization}

\label{gcsts}

In this section we will associate to each $h\in C^{\infty}(T^{*}K)$'satisfying properties \eqref{eq:3props} a unitary isomorphism $U_{\tau}:\mathcal{H}_{0}\rightarrow\mathcal{H}_{\tau}$. When $h=E$ is the kinetic energy Hamiltonian on $T^{*}K$ and $\tau=is\in i\mathbb{R}$, the map $U_{is}$ is Hall's CST. To do so, we first use the usual Kostant--Souriau quantization (with half-forms) to define an operator $e^{-i\tau\hat{h}}:\mathcal{H}_{0}\rightarrow\mathcal{H}_{\tau}$. We will show that for all $\tau\in\mathbb{C}^{+}\cup\mathbb{R}$, there is a canonical unitary $K\times K$ action the quantum Hilbert space $\mathcal{H}_{\tau}$, and that $e^{-i\tau\hat{h}}$ intertwines these $K\times K$ actions. Note that for $\tau=t\in\mathbb{R}$, and in general only for such $\tau$, the map $e^{-it\hat{h}}$ is unitary.

Now, suppose that, even though $\hat{h}$ does not preserve $\mathcal{H}_{0}$, we have \emph{some} method of quantizing $h$ to obtain a (possibly unbounded) linear operator $Q(h):\mathcal{H}_{0}\rightarrow\mathcal{H}_{0}$ (for example, if $h$ is a polynomial one might choose an ordering and apply the ``canonical'' quantization $y^{j}\mapsto\hat{y}^{j}$). For real $\tau=t\in\mathbb{R}$, the operator $ e^{itQ(h)}$ is unitary and corresponds to the $-t$ evolution of observables in the Heisenberg picture. Evolving then further in time $+t$ with the geometric quantization polarization changing operator $e^{-it \hat h}$, we see that the operator, $e^{-it \hat h} \circ e^{itQ(h)}$, gives a new representation of the original observables in the Hilbert space corresponding to the time $+t$ polarization. We will consider representation of algebras of observables on $\mathcal{H}_{0}$ and $\mathcal{H}_{\tau}$ in detail in \cite{Kirwin-Mourao-Nunes}.

For complex $\tau$, the operator $e^{-i\tau\hat{h}}\circ e^{i\tau Q(h)}$ remains unitary for $h$ equal to a scalar multiple of $\kappa$, but not in general for other cases. As mentioned above, we will see that for $\tau=is$ and $h=\frac12 |Y|^2$, one can take $E(\tau,h)=e^{i\tau Q(h)}$ with $Q(h)=-\frac{1}{2}\Delta+\frac{\left|\rho\right|^{2}}{2}$ (here, $\rho$ is half the sum of the positive roots of $\mathfrak{k}_{\mathbb{C}}$) to obtain Hall's CST, so that indeed the generalized $h$-CSTs are generalizations of the usual CST.

We will conclude the section by discussing briefly how the existence of the unitary maps $U_{\tau}$ can be explained by Mackey's generalization of the Stone-von Neumann theorem (thus answering a question of Hall in \cite{hall00} for the case $h(Y)=\frac12 |Y|^2$).

\bigskip{}
Recall from the standard results of geometric quantization, that if a (real or complex) function $f$ on $T^{*}K$ preserves a polarization, then its Kostant--Souriau prequantization is the operator 
\begin{equation}
\hat{f}:=\left(i\nabla_{X_{f}}^{L}+f\right)\otimes1+1\otimes\mathcal{L}_{X_{f}}\label{eq:KSquant}
\end{equation}
(acting on sections of $L\otimes\sqrt{\mathcal{K}^{\mathcal{P}}}$) and in fact in fact acts on the Hilbert space of polarized states. If the two functions $f$ and its complex conjugate $\overline f\in C^{\infty}(T^{*}K)$ both preserve the polarization, then the corresponding Kostant--Souriau operators are hermitian conjugate of each other.

For $\tau\in\mathbb{C}^{+}\cup\mathbb{R}$, consider the natural action of $K\times K$ on $\mathcal{H}_{\tau}$ by left and right translations,
\begin{equation}
U_{(g,g^{\prime})}f(xe^{\tau u})\beta_{\tau}(Y)\otimes\sqrt{\Omega_{\tau}}=f(gxe^{\tau u}g^{\prime})\beta_{\tau}(Y)\otimes\sqrt{\Omega_{\tau}},
\end{equation}
for $g,g^{\prime}\in K$ (recall that $\beta_{\tau}(Y)$ is $Ad_{K}$-invariant). As we shall see below, the vector fields generating this action preserve the polarization $\mathcal{P}^{\tau}$ and $\sqrt{\Omega_{\tau}}$.

\begin{prop}
\label{unitary} The operators $U_{(g,g^{\prime})},\ g,g^{\prime}\in K$ are unitary automorphisms of $\mathcal{H}_{\tau}$. 
\end{prop}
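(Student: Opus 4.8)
The plan is to realize the $K\times K$ action as the restriction of the biholomorphic left--right translation action on $K_{\mathbb C}$, and to deduce unitarity from the invariance of the measure $d\mu_\tau$.

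First I would identify the action on $T^{*}K$. Writing $\Phi:=\psi_{\tau}\circ\alpha$, so that $\Phi(x,Y)=xe^{\tau u(Y)}$, and using the equivariance $u(Ad_{{g'}^{-1}}Y)=Ad_{{g'}^{-1}}u(Y)$ from Lemma \ref{properties}(3), one has
\[
gxe^{\tau u(Y)}g'=(gxg')\,e^{\tau u(Ad_{{g'}^{-1}}Y)}=\Phi\big(gxg',Ad_{{g'}^{-1}}Y\big).
\]
Thus $U_{(g,g')}$ is induced by the diffeomorphism $\Psi_{(g,g')}:(x,Y)\mapsto(gxg',Ad_{{g'}^{-1}}Y)$ of $T^{*}K$, which is precisely the cotangent lift (in the left trivialization) of the map $x\mapsto gxg'$ on $K$. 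Two structural facts then follow at once: $\Psi_{(g,g')}$ is a symplectomorphism of $\omega$, because cotangent lifts preserve the tautological one-form $\theta$ and hence $\omega=-d\theta$ and the connection $\nabla^{L}=d+i\theta$; and since $\Phi$ intertwines $\Psi_{(g,g')}$ with the left--right translation $z\mapsto gzg'$ on $K_{\mathbb C}$, which is $J^{\tau}$-biholomorphic, the map $\Psi_{(g,g')}$ preserves $J^{\tau}$ and therefore the polarization $\mathcal{P}^{\tau}$.

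Next I would check that $U_{(g,g')}$ is a well-defined automorphism of $\mathcal{H}_{\tau}$, i.e. that the stated formula really is the geometric pullback action on half-form corrected sections and lands back in $\mathcal{H}_{\tau}$. For this it suffices to show the action fixes the trivializing data $\beta_{\tau}$ and $\sqrt{\Omega_{\tau}}$. The factor $\beta_{\tau}(Y)=\pi^{-n/4}e^{i\tau(B(u(Y),Y)-h(Y))}$ is independent of $x$ and depends on $Y$ only through the $Ad$-invariant quantity $B(u,Y)-h$, so it is invariant under $\Psi_{(g,g')}$. For $\Omega_{\tau}$, recall from Theorem \ref{thmomegatau}(2) that it is the left $K_{\mathbb C}$-invariant holomorphic volume form; since $K$ is of compact type, $K_{\mathbb C}$ is reductive and hence unimodular, so this left-invariant top form is automatically right-invariant, whence $\Omega_{\tau}$, and a fixed branch of $\sqrt{\Omega_{\tau}}$, is preserved by both left and right translations. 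Because $f(gxe^{\tau u}g')$ is again $J^{\tau}$-holomorphic whenever $f$ is, $U_{(g,g')}$ maps sections of the form \eqref{wave} to sections of the same form, and $U_{(g^{-1},{g'}^{-1})}$ furnishes an inverse.

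Finally, unitarity reduces to the invariance $\Psi_{(g,g')}^{*}d\mu_{\tau}=d\mu_{\tau}$. In the expression \eqref{eq:dmu} the Liouville factor $\omega^{n}/n!$ is preserved because $\Psi_{(g,g')}$ is symplectic, while the remaining factors $e^{-\kappa}\tau_{2}^{n/2}\eta(\tau_{2}u)(\det H)^{1/2}$ depend only on $Ad$-invariant functions of $Y$ and so are unchanged by $Y\mapsto Ad_{{g'}^{-1}}Y$; this is exactly the content of Lemma \ref{adinvmeasure} combined with the symplectomorphism property. Changing variables by $\Psi_{(g,g')}$ in the inner product of Definition \ref{hilberts} then yields $\langle U_{(g,g')}\sigma,U_{(g,g')}\sigma'\rangle_{\tau}=\langle\sigma,\sigma'\rangle_{\tau}$ on the dense span of the sections \eqref{wave}, so $U_{(g,g')}$ extends to a unitary automorphism of $\mathcal{H}_{\tau}$. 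I expect the only genuinely delicate point to be the invariance of the half-form $\sqrt{\Omega_{\tau}}$ under right translation: this is where unimodularity of $K_{\mathbb C}$ (equivalently, bi-invariance of the holomorphic volume form) is essential, and where one must keep careful track of the $Ad$-twist $Y\mapsto Ad_{{g'}^{-1}}Y$ in the fiber that necessarily accompanies right translation on the base.
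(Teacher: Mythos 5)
Your argument for $\tau\in\mathbb{C}^{+}$ is correct, and it takes a genuinely different route from the paper's. The paper stays inside representation theory: Weyl's orthogonality relations collapse the integral over $K$, and $Ad_{K}$-invariance of $d\mu_{\tau}$ (Lemma \ref{adinvmeasure}) together with Schur's lemma then shows that the inner product on each block $V_{\rho,\rho}^{\tau}$ is a multiple of the Hilbert--Schmidt pairing of the coefficient matrices (this is made explicit later, in Propositions \ref{real} and \ref{admissible}); since $U_{(g,g^{\prime})}$ acts on a block by $A\mapsto\rho(g^{\prime})A\rho(g)$ with $\rho(g),\rho(g^{\prime})$ unitary, unitarity follows. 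You instead realize $U_{(g,g^{\prime})}$ as the geometric pullback by the cotangent lift $\Psi_{(g,g^{\prime})}(x,Y)=(gxg^{\prime},Ad_{g^{\prime-1}}Y)$ and verify that this diffeomorphism preserves every piece of the quantization data: $\theta$ (hence $\omega$, the trivialization and $\nabla^{L}$), $J^{\tau}$ (via the biholomorphism $z\mapsto gzg^{\prime}$ of $K_{\mathbb{C}}$), $\beta_{\tau}$, $\sqrt{\Omega_{\tau}}$ (via unimodularity, $\det Ad_{g^{\prime}}=1$), and finally $d\mu_{\tau}$, so that unitarity is just a change of variables. This is a sound and more conceptual argument --- it explains unitarity as coming from a structure-preserving symplectomorphism, and it proves en route the claim, only asserted before the proposition, that the action preserves $\mathcal{P}^{\tau}$ and $\sqrt{\Omega_{\tau}}$ --- whereas the paper's computation is shorter because the orthogonality analysis is needed anyway for its later results.

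There is, however, a gap: the proposition (see the paragraph preceding it) is asserted for all $\tau\in\mathbb{C}^{+}\cup\mathbb{R}$, and your proof covers only $\tau\in\mathbb{C}^{+}$. For $\tau=t\in\mathbb{R}$ the change-of-variables argument has nothing to act on: the density in \eqref{eq:dmu} degenerates as $\tau_{2}\to0$ (note the factor $\tau_{2}^{n/2}$), and the inner product on $\mathcal{H}_{t}$ is not given by any integral but is defined abstractly by the limiting formula of Definition \ref{boundaryhilberts}. The paper handles this case separately. To close the gap you can either argue directly --- $U_{(g,g^{\prime})}$ sends $Tr\bigl(A\rho(xe^{tu})\bigr)\beta_{t}\otimes\sqrt{\Omega_{t}}$ to $Tr\bigl(\rho(g^{\prime})A\rho(g)\rho(xe^{tu})\bigr)\beta_{t}\otimes\sqrt{\Omega_{t}}$, and the inner product of Definition \ref{boundaryhilberts} equals $(\dim\rho)^{-1}Tr(A^{*}A^{\prime})$ on each block, which is preserved by $A\mapsto\rho(g^{\prime})A\rho(g)$ since $\rho(g),\rho(g^{\prime})$ are unitary --- or invoke Proposition \ref{real}(3): the inner product on $\mathcal{H}_{t}$ is the $\tau_{2}\to0$ limit of the inner products on $\mathcal{H}_{\tau}$, the action on coefficient matrices is $\tau$-independent, and unitarity passes to the limit. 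Either supplement is two lines, but as written your proof does not establish the boundary case.
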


\begin{proof}
For $\tau\in\mathbb{C}^{+},$ unitarity follows from Weyl's orthogonality relations and from $Ad_{K}$-invariance of the measure $d\mu_{\tau}$ (Lemma \ref{adinvmeasure}). For $\tau\in\mathbb{R}$, it follows from Weyl's orthogonality relations and from the explicit expression in Definition \ref{boundaryhilberts}.
\end{proof}

\bigskip{}
We begin by noting a few useful technical results. Recall that $\{\tilde y^j\}$ denote the coordinates on the fibers of $T^*K$ corresponding to the frame of right-invariant one-forms $\{\tilde w^j\}$ on $K$, so that $\tilde Y(x,Y)= Ad_x(Y)$.

\begin{lem}
\label{techresults} We have
\begin{enumerate}
\item $\sum_{k=1}^{n}(ad_{u})_{jk}\frac{\partial}{\partial u^{k}}\rho_{ab}(e^{\tau u})=[\rho(T_{j}),\rho(e^{\tau u})]_{ab},\ \forall\rho\in\hat{K},\ j=1\dots,n,\ a,b=1,\dots,\dim\rho$ and
\item $\tilde{X}_{j}(\rho(xe^{\tau u})_{a,b})=\rho(xe^{\tau u}Ad_{x^{-1}}(T_{j}))_{ab},\ \forall\rho\in\hat{K},\ j=1\dots,n,\ a,b=1,\dots,\dim\rho$.
\end{enumerate}
\end{lem}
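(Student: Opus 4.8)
The plan is to establish both identities by reducing them to statements about the analytic continuation of matrix elements, exploiting the fact that $\rho(xe^{\tau u})$ is genuinely a representation of $K_{\mathbb C}$ (Theorem \ref{analyticcont}), so that the only subtlety is how the relevant differential operators act on the $e^{\tau u}$ factor. For part (1), I would first observe that $u^k$ depends only on $Y$, so $\partial/\partial u^k$ acts purely on the fiber factor $\rho(e^{\tau u})$. The key computation is that, since $\rho$ restricts to a representation of $K$ with $d\rho(T_j)=:\rho(T_j)$, one has $\frac{\partial}{\partial u^k}\rho(e^{\tau u})=\tau\,\frac{\partial (\tau u)}{\partial u^k}\cdots$, but more cleanly I would differentiate the defining series and use that $\sum_k (ad_u)_{jk}\partial/\partial u^k$ is precisely the directional derivative in the adjoint direction $[u,\,\cdot\,]$.

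More concretely, the operator $\sum_k (ad_u)_{jk}\partial_{u^k}$ applied to a function of $u$ computes the derivative of $\rho(e^{\tau u})$ along the curve $s\mapsto e^{s\,ad_u}$-flow; because $ad_u(T_j)=[u,T_j]$, differentiating $\rho(e^{\tau u})$ in the direction $ad_u(T_j)$ and using the standard identity $\frac{d}{ds}\big|_0 \rho(e^{\tau(u+s[u,T_j])})$ together with $[u,T_j]$ generating the adjoint action yields the commutator $[\rho(T_j),\rho(e^{\tau u})]$. The cleanest route is to note that $ad_u$ acting on the basis is the infinitesimal adjoint action, and under $\rho$ the adjoint action exponentiates to conjugation, so $\rho(Ad_{e^{su}}(\cdot))=\rho(e^{su})\rho(\cdot)\rho(e^{-su})$; differentiating this conjugation relation at the representation level produces exactly the commutator on the right-hand side.

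For part (2), I would use the characterization of right-invariant vector fields. Since $\tilde X_j$ is the right-invariant vector field equal to $T_j$ at the identity, its action on a function $F$ on $K$ is $\tilde X_j F(x)=\frac{d}{ds}\big|_0 F(e^{sT_j}x)$. Applying this to $F(x)=\rho(xe^{\tau u(Y)})_{ab}$ and using that the fiber point $Y$ (equivalently $u$) is held fixed while differentiating the base, one gets $\frac{d}{ds}\big|_0\rho(e^{sT_j}xe^{\tau u})_{ab}$. The subtlety is that $\tilde X_j$ acts on $T^*K\cong K\times\mathfrak k$, and I must check that differentiating in the right-invariant base direction at fixed $Y$ is what is intended; then I rewrite $\rho(e^{sT_j}x)=\rho(x)\rho(Ad_{x^{-1}}(e^{sT_j}))=\rho(x)\rho(e^{s\,Ad_{x^{-1}}(T_j)})$ by inserting $x^{-1}x$ and using the homomorphism property, and differentiating at $s=0$ produces the factor $\rho(xe^{\tau u}\,Ad_{x^{-1}}(T_j))_{ab}$ after commuting the new Lie-algebra element past the $e^{\tau u}$ factor.

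The main obstacle I anticipate is the careful bookkeeping of exactly where each differential operator acts (fiber versus base) under the trivialization $T^*K\cong K\times\mathfrak k$, and making the interchange of $s$-differentiation with the infinite Peter--Weyl/exponential series rigorous; convergence of these series for $\tau\in\mathbb C^+$ is already guaranteed by Theorem \ref{analyticcont} and Corollary \ref{realanalyticcont}, so term-by-term differentiation is justified, but the placement of the inserted Lie-algebra element $Ad_{x^{-1}}(T_j)$ relative to $e^{\tau u}$ in part (2) requires verifying that it lands to the \emph{right} of $e^{\tau u}$ as stated, which follows from the conjugation identity $\rho(xe^{\tau u})\rho(Ad_{x^{-1}}(T_j))$ rather than inserting on the left. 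Both parts are ultimately routine once the representation-theoretic identities $\rho\circ Ad = \mathrm{Conj}\circ\rho$ and the homomorphism property are invoked, so I expect the proof to be short, consisting mainly of these two direct computations.
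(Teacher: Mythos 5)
Your treatment of part (1) is essentially correct, and it is a genuinely different (more conceptual) route than the paper's: the paper simply expands $\rho(e^{\tau u})$ in powers of $u$ and telescopes term by term using $(ad_u)_{jk}=\sum_i u^i C_{ikj}$, whereas you identify $\sum_k(ad_u)_{jk}\partial/\partial u^k$ as the generator of the adjoint rotation of $u$ and use that $\rho$ turns the adjoint action into conjugation, so the derivative of $s\mapsto\rho\bigl(e^{\tau\,Ad_{e^{sT_j}}u}\bigr)=\rho(e^{sT_j})\rho(e^{\tau u})\rho(e^{-sT_j})$ is the commutator. Do fix the bookkeeping, though: the relevant curve is $s\mapsto Ad_{e^{sT_j}}u$ (conjugation by $e^{sT_j}$, not by $e^{su}$ as you wrote), i.e. the derivative is taken in the direction $[T_j,u]=-ad_u(T_j)$; otherwise the sign of the commutator comes out wrong.

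Part (2) contains a genuine gap, located exactly at the point you flagged as a subtlety and then waved away. If you hold $Y$ (equivalently $u$) fixed while differentiating the base, you get
\[
\frac{d}{ds}\Big\vert_{s=0}\rho\bigl(e^{sT_j}xe^{\tau u}\bigr)_{ab}=\bigl(\rho(x)\rho(Ad_{x^{-1}}T_j)\rho(e^{\tau u})\bigr)_{ab},
\]
with the Lie-algebra element to the \emph{left} of $\rho(e^{\tau u})$, and there is no way to move it to the right: $Ad_{x^{-1}}T_j$ does not commute with $u(Y)$ in general, and the discrepancy is precisely $\rho(x)\,[\rho(Ad_{x^{-1}}T_j),\rho(e^{\tau u})]$. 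This discrepancy is not negligible; by part (1) it is exactly the second term in the proof of Theorem \ref{preserves}, and if it vanished identically that two-term computation would be vacuous. So your argument, carried out honestly, proves a different identity, not the one stated.

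The lemma is true only under the convention, stated explicitly in the paper's proof, that $\tilde X_j$ acts on functions on $T^*K$ in the coordinates $(x,\tilde Y)$, i.e. holding the \emph{right}-trivialized momentum $\tilde Y=Ad_x(Y)$ fixed rather than $Y$. Then, along the flow $x\mapsto e^{sT_j}x$, the left-trivialized fiber variable rotates, $Y(s)=Ad_{x^{-1}e^{-sT_j}x}(Y)$, and by the equivariance of $u$ (Lemma \ref{properties}) so does $u$: $u(Y(s))=Ad_{x^{-1}e^{-sT_j}x}\bigl(u(Y)\bigr)$. Hence
\[
\rho\bigl(e^{sT_j}x\,e^{\tau u(Y(s))}\bigr)=\rho\bigl(e^{sT_j}x\,(x^{-1}e^{-sT_j}x)\,e^{\tau u}\,(x^{-1}e^{sT_j}x)\bigr)=\rho\bigl(xe^{\tau u}\,x^{-1}e^{sT_j}x\bigr),
\]
whose $s$-derivative at $s=0$ is $\rho\bigl(xe^{\tau u}Ad_{x^{-1}}(T_j)\bigr)_{ab}$, as claimed. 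In other words, what places $Ad_{x^{-1}}(T_j)$ to the right of $e^{\tau u}$ is the equivariant rotation of the fiber variable under the flow, not any commutation; the missing ingredient in your proposal is this choice of trivialization together with Lemma \ref{properties}(3).
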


\begin{proof}
To prove (1), it is enough to expand the exponential in powers of $u$ and use $u=\sum_{i=1}^{n}u^{i}T_{i}$ and $(ad_{u})_{jk}=\sum_{i=1}^{n}u^{i}C_{ikj}$. To prove (2), note that the action of the right-invariant vector field $\tilde{X}_{j}$ in the $(x,\tilde{Y})$ coordinates reads $\tilde{X}_{j}\cdot f(x,\tilde{Y})=\frac{d}{dt}_{|_{t=0}}f(e^{tT_{j}}x,\tilde{Y})$, for any smooth function $f$. 
\end{proof}

\begin{lem}
\label{hats} Suppose $f:\mathfrak{k}\rightarrow\mathbb{C}$ is such that $Y(f(Y))=f(Y)$. Then the Kostant--Souriau prequantum operators associated to $f$ (interpreted now as an $K$-invariant function on $T^{*}K\simeq K\times\mathfrak{k}$) is
\[
\hat{f}=iX_{f}\otimes1+1\otimes i\mathcal{L}_{X_{f}},
\]
where the Hamiltonian vector field $X_{f}$ is
\[
\left(X_{f}\right)_{(x,Y)}=\begin{pmatrix}\mathrm{grad}(f)\\
ad_{Y}\mathrm{grad}(f)
\end{pmatrix}.
\]
\end{lem}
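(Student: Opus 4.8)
The plan is to start from the general Kostant--Souriau formula \eqref{eq:KSquant} and simplify its $L$-factor using the explicit form of the canonical $1$-form $\theta$ together with the homogeneity hypothesis $Y(f(Y))=f(Y)$; the half-form factor will be carried through the computation unchanged.

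First I would verify the stated formula for $X_{f}$. Since $f$ depends only on the fibre coordinate, $df=\sum_{j}(\partial f/\partial y^{j})\,dy^{j}$ has no $w^{j}$-component. Solving $\iota_{X_{f}}\omega=df$ with the block form \eqref{eqn:symp-block} of $\omega$ then gives two scalar equations: matching the fibre ($dy^{j}$) component determines the base component of $X_{f}$ to be $\mathrm{grad}(f)=B^{-1}(df)$, while matching the base ($w^{j}$) component determines the fibre component to be $ad_{Y}\,\mathrm{grad}(f)$. The only subtlety is the $ad_{Y}$ correction arising from the noncanonical term $B(Y,[X,Z])$ in $\omega$; it is handled by $Ad$-invariance of $B$, in the form of the identity $B(Y,[A,Z])=B(ad_{Y}A,Z)$.

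Next I would compute the potential contribution. Because $L$ is trivialized by $\theta$, one has $\nabla^{L}=d+i\theta$, so on trivialized sections $i\nabla^{L}_{X_{f}}+f=iX_{f}-\theta(X_{f})+f$. Evaluating $\theta$ on $X_{f}$ and using that its base component is $\mathrm{grad}(f)$ gives
\[
\theta(X_{f})=B(Y,\mathrm{grad}(f))=\sum_{j=1}^{n}y^{j}\frac{\partial f}{\partial y^{j}}=Y(f),
\]
i.e. $\theta(X_{f})$ is exactly the radial (Euler) derivative of $f$. This is where the hypothesis enters: $Y(f)=f$ yields $\theta(X_{f})=f$, so the potential term cancels the $\theta$-term and $i\nabla^{L}_{X_{f}}+f=iX_{f}$. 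Assembling the two factors, the $L$-part reduces to $iX_{f}\otimes 1$ while the half-form factor $1\otimes i\mathcal{L}_{X_{f}}$ is unaffected, which is the claimed expression.

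I expect the cancellation in the third step to be the crux of the argument: the entire content of the lemma is the recognition that $B(Y,\mathrm{grad}(f))$ is the radial derivative $Y(f)$ and that degree-one homogeneity makes it equal to $f$, so that the $L$-factor of the Kostant--Souriau operator collapses from $i\nabla^{L}_{X_{f}}+f$ to the bare $iX_{f}$. The remaining work (verifying $X_{f}$ and tracking the factors of $i$) is routine bookkeeping, the only genuinely non-abelian input being the $Ad$-invariance of $B$ used to pin down the $ad_{Y}$ term.
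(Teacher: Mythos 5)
Your proposal is correct and follows essentially the same route as the paper's proof: verify the stated formula for $X_{f}$ from the symplectic form, write $i\nabla^{L}_{X_{f}}+f=iX_{f}-\theta(X_{f})+f$ in the trivialization $\nabla^{L}=d+i\theta$, and use $\theta(X_{f})=Y(f)=f$ to cancel the potential term (the paper merely asserts the $X_{f}$ verification is easy, whereas you carry it out, correctly invoking $Ad$-invariance of $B$ for the $ad_{Y}$ component). The only blemish, inherited from the paper itself, is the inconsistent factor of $i$ on the half-form term $\mathcal{L}_{X_{f}}$ between \eqref{eq:KSquant} and the lemma's statement, which neither you nor the paper's proof addresses.
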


The hypothesis that $Y(f(Y))=f(Y)$ just means that $f$ is linear in the cotangent fibers and zero on the zero section.

\begin{proof}
It is easy to check that the given expression for $X_{f}$ is indeed the Hamiltonian vector field for $f$. In the trivialization $\nabla^{L}=d+i\theta$, one therefore has 
\begin{align*}
\hat{f} & =iX_{f}-\theta(X_{f})+f\\
 & =iX_{f}-Y(f)+f.
\end{align*}
\end{proof}

\bigskip{}
It follows the the Kostant--Souriau prequantum operators associated to the coordinate functions $y^{j}$ and $\tilde{y}^{j}$ are given by
\begin{eqnarray}
\hat{y}^{j} & = & \left(iX_{j}-i\sum_{k=1}^{n}(ad_{Y})_{jk}\frac{\partial}{\partial y^{k}}\right)\otimes1+1\otimes i\mathcal{L}_{X_{y^{j}}},\mbox{ and}\label{yhat}\\
\hat{\tilde{y}}^{j} & = & i\tilde{X}_{j} \otimes1+1\otimes\mathcal{L}_{X_{\tilde{y}^{j}}}.
\end{eqnarray}

\begin{lem}
\label{preserveshalf} The Poisson brackets of $h$ with the coordinate functions $y^{j}$ and $\tilde{y}^{j}$ are zero,
\[
\{h,y^{j}\}=\{h,\tilde{y}^{j}\}=0,\ j=1,\dots,n.
\]
Moreover, $\mathcal{L}_{X_{y^{j}}}\Omega_{\tau}=\mathcal{L}_{X_{\tilde{y}^{j}}}\Omega_{\tau}=0,\ j=1,\dots,n$. 
\end{lem}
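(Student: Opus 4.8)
The plan is to treat the two assertions in turn: first the Poisson brackets by direct computation, and then the half-form statement by reducing it, via Theorem \ref{thmomegatau}(2), to the vanishing of the brackets together with the case $\tau=0$.

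For the brackets, I would compute $\{h,y^{j}\}=X_{h}(y^{j})$ and $\{h,\tilde{y}^{j}\}=X_{h}(\tilde{y}^{j})$ (up to the sign convention for $\{\cdot,\cdot\}$) using $X_{h}=\sum_{i}u^{i}X_{i}$ from (\ref{hamvect}). Since each $X_{i}$ is the lift of a left-invariant vector field on $K$, it differentiates only in the group directions; as $y^{j}$ depends on the fibre coordinate $Y$ alone, one has $X_{i}(y^{j})=0$ and hence $\{h,y^{j}\}=0$ at once. For $\tilde{y}^{j}$ I would use $\tilde{Y}(x,Y)=Ad_{x}(Y)$, so that $\tilde{y}^{j}=B(Ad_{x}Y,T_{j})$; differentiating along $X_{i}$, whose flow is $x\mapsto xe^{tT_{i}}$, produces $B(Ad_{x}[T_{i},Y],T_{j})$, and therefore $X_{h}(\tilde{y}^{j})=B(Ad_{x}[u,Y],T_{j})$. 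This vanishes because $[u,Y]=0$ by the $Ad$-invariance of $h$, i.e.\ Lemma \ref{properties}(1).

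For the Lie-derivative statement I would exploit that the vanishing of the brackets forces the Hamiltonian vector fields to commute: from $\{y^{j},h\}=\{\tilde{y}^{j},h\}=0$ one gets $[X_{y^{j}},X_{h}]=[X_{\tilde{y}^{j}},X_{h}]=0$, so that $\mathcal{L}_{X_{y^{j}}}$ and $\mathcal{L}_{X_{\tilde{y}^{j}}}$ each commute with $\mathcal{L}_{X_{h}}$, and hence, term by term in the convergent series of Theorem \ref{thmomegatau}(2), with $e^{\tau\mathcal{L}_{X_{h}}}$. Writing $\Omega_{\tau}=e^{\tau\mathcal{L}_{X_{h}}}\Omega_{0}$ then reduces the claim to the case $\tau=0$, namely $\mathcal{L}_{X_{y^{j}}}\Omega_{0}=\mathcal{L}_{X_{\tilde{y}^{j}}}\Omega_{0}=0$. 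To settle this I would note that $\Omega_{0}=w^{1}\wedge\cdots\wedge w^{n}$ is the pullback of a top-degree form on $K$, so $d\Omega_{0}=0$ and, crucially, contraction of $\Omega_{0}$ with any vertical vector $\partial/\partial y^{k}$ vanishes. Since $X_{y^{j}}$ and $X_{\tilde{y}^{j}}$ have horizontal parts $X_{j}$ and $\tilde{X}_{j}$ respectively (from (\ref{yhat}) and the companion formula for $\hat{\tilde{y}}^{j}$) with purely vertical remainders, Cartan's formula gives $\mathcal{L}_{X_{y^{j}}}\Omega_{0}=d\,\iota_{X_{j}}\Omega_{0}=\mathcal{L}_{X_{j}}\Omega_{0}$ and likewise $\mathcal{L}_{X_{\tilde{y}^{j}}}\Omega_{0}=\mathcal{L}_{\tilde{X}_{j}}\Omega_{0}$. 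Finally $\mathcal{L}_{X_{j}}\Omega_{0}=0$ because $X_{j}$ is left-invariant and the total antisymmetry of the structure constants gives $\sum_{k}C^{k}_{jk}=\mathrm{tr}(ad_{T_{j}})=0$ (unimodularity of $K$), while $\mathcal{L}_{\tilde{X}_{j}}\Omega_{0}=0$ because $\tilde{X}_{j}$ generates left translations, under which the left-invariant form $\Omega_{0}$ is manifestly invariant.

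The main obstacle — really the only delicate point — is the bookkeeping in the $\tau=0$ base case: one must verify that the vertical components of the Hamiltonian vector fields make no contribution to $\mathcal{L}_{\cdot}\Omega_{0}$, and that $\mathcal{L}_{X_{j}}\Omega_{0}=0$ genuinely rests on the unimodularity of $K$ (equivalently, total antisymmetry of the $C^{j}_{kl}$). One should also confirm that interchanging $\mathcal{L}_{X_{y^{j}}}$ with the infinite series $e^{\tau\mathcal{L}_{X_{h}}}$ is legitimate, which follows from the term-by-term commutation above and the convergence established in Theorem \ref{thmomegatau}(2).
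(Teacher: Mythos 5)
Your proof is correct and follows essentially the same route as the paper: both establish that $\mathcal{L}_{X_{y^{j}}}$ (resp.\ $\mathcal{L}_{X_{\tilde{y}^{j}}}$) commutes with $\mathcal{L}_{X_{h}}$ and then push this commutation through the series $\Omega_{\tau}=e^{\tau\mathcal{L}_{X_{h}}}\Omega_{0}$ of Theorem \ref{thmomegatau}(2), reducing everything to the invariance of $\Omega_{0}$. The only differences are cosmetic: you deduce $[X_{y^{j}},X_{h}]=0$ from the vanishing Poisson brackets via the homomorphism $f\mapsto X_{f}$, where the paper computes the Lie bracket directly from the explicit formulas (using antisymmetry of the structure constants and $ad_{Y}H=ad_{u}$), and you verify $\mathcal{L}_{X_{j}}\Omega_{0}=\mathcal{L}_{\tilde{X}_{j}}\Omega_{0}=0$ by an explicit Cartan-formula/unimodularity computation where the paper simply cites the $K\times K$-invariance of $\Omega_{0}$.
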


\begin{proof}
Recall that the Hamiltonian vector field for the function $h$ is $X_{h}=\sum_{i=1}^{n}u^{i}X_{i}$. Thus, from the proof of Lemma \ref{hats}, the Lie bracket
\[
[X_{h},X_{y^{j}}]=[X_{h},X_{j}-\sum_{k=1}^{n}(ad_{Y})_{jk}\frac{\partial}{\partial y^{k}}]=\sum_{i,k=1}^{n}\left(C_{jk}^i u^{i}X_{k}+(ad_{Y})_{jk}H_{ik}X_{i}\right)=0,
\]
since the structure constants are completely antisymmetric and $(ad_{Y})\cdot H=(ad_{u})$. Therefore, the Lie derivatives $\mathcal{L}_{X_{y^{j}}}$ and $\mathcal{L}_{X_{h}}$ commute. Then the $K\times K$ invariance of $\Omega_{0}$ and Theorem \ref{thmomegatau}(2) imply $\mathcal{L}_{X_{y^{j}}}\Omega_{\tau}=0$. Likewise, defining $\tilde{u}^{i}=\frac{\partial h}{\partial\tilde{y}^{i}}$, one obtains also $X_{h}=\sum_{i=1}^{n}\tilde{u}^{i}\tilde{X}_{i}$ and the same proof applies for $\tilde{y}^{j}$.
\end{proof}

\bigskip{}
The operators $\hat{y}^{j}$ and $\hat{\tilde{y}}^{j}$preserve $\mathcal{H}_{\tau}$ and, as the next theorem shows, generate the natural action of $K\times K$ on $\mathcal{P}^{\tau}$-holomorphic functions on $T^{*}K$ by left and right translations. 

\begin{thm}
\label{preserves} Let $\tau\in\mathbb{C}^{+}\cup\mathbb{R}$. The action of $K\times K$ on $\mathcal{H}_{_{\tau}}$ is generated by the operators $\{\hat{\tilde{y}}^{j},\hat{y}^{j}\}_{j=1,\dots,n}$, where the operators $\{y^{j}\}$ generate the right $K$ action and the operators $\{\hat{\tilde{y}}^{j}\}$ generate the left $K$ action.
\end{thm}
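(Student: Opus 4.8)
The plan is to evaluate the Kostant--Souriau operators $\hat{y}^{j}$ and $\hat{\tilde{y}}^{j}$ directly on the spanning sections \eqref{wave} and to recognize the results as the infinitesimal generators of the right and left translation actions $U_{(g,g')}$. Since Proposition \ref{unitary} already gives unitarity of these translations, and since $y^{j},\tilde{y}^{j}$ are real functions whose Hamiltonian flows preserve both $\mathcal{P}^{\tau}$ and $\sqrt{\Omega_{\tau}}$ (so the associated operators are essentially self-adjoint on the span of matrix elements), it suffices to match generators on this dense domain and then appeal to Stone's theorem; convergence of the resulting exponential series is guaranteed by Theorem \ref{analyticcont}.

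First I would show that on a section $\rho(xe^{\tau u})_{ab}\beta_{\tau}(Y)\otimes\sqrt{\Omega_{\tau}}$ each operator reduces to its prequantum part acting on the holomorphic factor. By Lemma \ref{preserveshalf} we have $\mathcal{L}_{X_{y^{j}}}\Omega_{\tau}=\mathcal{L}_{X_{\tilde{y}^{j}}}\Omega_{\tau}=0$, so $\sqrt{\Omega_{\tau}}$ is left invariant and the $1\otimes i\mathcal{L}$ summand of \eqref{eq:KSquant} contributes nothing. Because $y^{j}$ and $\tilde{y}^{j}$ are linear in the fibres and vanish on the zero section, the scalar part of \eqref{eq:KSquant} cancels exactly as in Lemma \ref{hats}: in the trivialization $\nabla^{L}=d+i\theta$ one has $\theta(X_{y^{j}})=B(Y,T_{j})=y^{j}$ and likewise $\theta(X_{\tilde{y}^{j}})=B(Y,Ad_{x^{-1}}T_{j})=B(\tilde{Y},T_{j})=\tilde{y}^{j}$. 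Finally $\beta_{\tau}(Y)$, being a function of the $Ad$-invariants $h$ and $B(u,Y)$, is annihilated by both $X_{y^{j}}$ and $X_{\tilde{y}^{j}}$. Hence $\hat{y}^{j}$ and $\hat{\tilde{y}}^{j}$ act as $iX_{y^{j}}$ and $iX_{\tilde{y}^{j}}$ on the coefficient $\rho(xe^{\tau u})_{ab}$.

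The heart of the argument is the explicit computation of these vector fields on matrix elements. For the right factor I would take $X_{y^{j}}=X_{j}-\sum_{k}(ad_{Y})_{jk}\partial/\partial y^{k}$ from \eqref{yhat}, convert the fibre derivatives through $\partial/\partial y^{k}=\sum_{l}H_{kl}\partial/\partial u^{l}$ together with $ad_{Y}H=ad_{u}$ (Lemma \ref{properties}(2)), and apply Lemma \ref{techresults}(1). The commutator $[\rho(T_{j}),\rho(e^{\tau u})]$ produced there cancels the $X_{j}$ contribution and leaves $X_{y^{j}}\rho(xe^{\tau u})_{ab}=[\rho(xe^{\tau u})\rho(T_{j})]_{ab}=\tfrac{d}{ds}\big|_{s=0}\rho(xe^{\tau u}e^{sT_{j}})_{ab}$, the generator of the right translation $z\mapsto zg'$. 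For the left factor I would use Lemma \ref{techresults}(2), after identifying $X_{\tilde{y}^{j}}$ as the Hamiltonian vector field generating left translation $x\mapsto e^{sT_{j}}x$, to obtain $X_{\tilde{y}^{j}}\rho(xe^{\tau u})_{ab}=[\rho(T_{j})\rho(xe^{\tau u})]_{ab}$, the generator of $z\mapsto gz$. In both cases the image is again of the form \eqref{wave} (with $A$ replaced by $A\rho(T_{j})$ or $\rho(T_{j})A$), so $\mathcal{H}_{\tau}$ is manifestly preserved, and the two families of operators are identified with the right and left $K$-actions respectively.

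The step I expect to be the main obstacle is the careful bookkeeping in this last computation: correctly tracking the adjoint twists that distinguish the body and spatial momenta $Y$ and $\tilde{Y}=Ad_{x}Y$, so that the Hamiltonian flow of $\tilde{y}^{j}$ is matched to the genuine left translation (the subtlety being that this flow fixes $Y$ rather than $\tilde{Y}$), and verifying that the $H$-dependent passage between $\partial/\partial y$ and $\partial/\partial u$ conspires with Lemma \ref{techresults}(1) to eliminate the commutator term for $\hat{y}^{j}$. Once the infinitesimal identity holds on the span of matrix elements, promoting it to the full $K\times K$ action is routine: essential self-adjointness on this domain together with Stone's theorem produces the one-parameter unitary groups, and re-summing the series recovers the finite translations $f(xe^{\tau u})\mapsto f(gxe^{\tau u}g')$.
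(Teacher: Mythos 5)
Your proposal is correct and follows essentially the same route as the paper: evaluate $\hat{y}^{j}$ and $\hat{\tilde{y}}^{j}$ on the matrix-element sections \eqref{wave}, use Lemma \ref{preserveshalf} and the $Ad_K$-invariance of $\beta_{\tau}$ to kill the half-form and scalar contributions, and then combine \eqref{yhat}, $ad_{Y}H=ad_{u}$ from Lemma \ref{properties}, and Lemma \ref{techresults} so that the commutator term cancels against the $X_{j}$ (resp. $\tilde{X}_{j}$) term, leaving exactly the infinitesimal right (resp. left) translation $\rho(xe^{\tau u}T_{j})_{ab}$ (resp. $\rho(T_{j}xe^{\tau u})_{ab}$). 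The concluding appeal to essential self-adjointness and Stone's theorem is an added formality the paper does not spell out (exponentiation is immediate block-by-block on the finite-dimensional isotypic subspaces $V_{\rho,\rho}^{\tau}$, given Proposition \ref{unitary}), but it does no harm.
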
 

\begin{proof}
Let $\rho\in\hat{K}$ and let $a,b=1,\dots,\dim\rho$. Then, from Lemma \ref{techresults} and (\ref{yhat}),
\[
-i\hat{y}^{j}\rho(xe^{\tau u})_{ab}=\rho(xT_{j}e^{\tau u})_{ab}-\rho(x[T_{j},e^{\tau u}])_{ab}=\rho(xe^{\tau u}T_{j})_{ab}=\frac{d}{dt}_{|_{t=0}}U_{(1,e^{tT_{j}})}\rho(xe^{\tau u})_{ab}.
\]
On the other hand, is is easy to check that $X_{y^{j}}(\beta_{\tau}(Y))=0$. Moreover, from Lemma \ref{preserveshalf}, we see that the Lie derivatives along $X_{y^{j}}$ also act trivially on $\sqrt{\Omega_{\tau}}$. For $\hat{\tilde{y}}^{j}$, from Lemma \ref{techresults} and (\ref{yhat}) we get
\begin{align*}
-i\hat{\tilde{y}}^{j}\rho(xe^{\tau u})_{ab} & =\rho(xe^{\tau u}x^{-1}T_{j}x)_{ab}+\sum_{i=1}^{n}(Ad_{x^{-1}})_{ij}\rho(x[T_{i},e^{\tau u}])_{ab}\\
 & =\rho(T_{j}xe^{\tau}u)=\frac{d}{dt}_{|_{t=0}}U_{(e^{tT_{j}},1)}\rho(xe^{\tau u})_{ab}.
\end{align*}
Similarly, the action on $\beta_{\tau}$ and $\Omega_{\tau}$ is trivial.
\end{proof}

\bigskip{}
Now, suppose $h$ is a complexifier function on $T^{*}K$ satisfying properties \eqref{eq:3props}. One computes that
\[
e^{-i\tau\hat{h}}=e^{i\tau(B(u(Y),Y)-h(Y))}e^{\tau X_{h}}\otimes e^{\tau\mathcal{L}_{X_{h}}}.
\]

\begin{thm}
\label{evolu} Let $\tau\in\mathbb{C}^{+}\cup\mathbb{R}$. Then $e^{-i\tau\hat{h}}$ is a densely defined linear map from $\mathcal{H}_{0}$ to $\mathcal{H}_{\tau}$ and intertwines the canonical actions of $K\times K$ on $\mathcal{H}_{0}$ and $\mathcal{H}_{\tau}$ by left and right translations. If $t\in\mathbb{R}$, then $e^{it\hat{h}}$ is unitary. 
\end{thm}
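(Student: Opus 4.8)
The plan is to reduce all three assertions to the explicit action of $e^{-i\tau\hat h}$ on the dense subspace of $\mathcal H_0$ spanned by matrix elements of irreducible representations, and then to read off well-definedness, intertwining, and real-time unitarity from the computations already assembled in Theorems \ref{analyticcont}, \ref{thmomegatau} and \ref{tautwopositive} and in Proposition \ref{real}. The analytic heavy lifting (square-integrability and the delicate $\tau_2\to0$ limit of inner products) has been front-loaded into those results, so the theorem itself is largely a matter of bookkeeping.

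First I would compute the action on a generator. A dense set of $\mathcal H_0=L^2(K,dx)$ is spanned by sections $\rho(x)_{ab}\beta_0\otimes\sqrt{\Omega_0}$ with $\rho\in\hat K$, where $\beta_0=\pi^{-n/4}$ and $\Omega_0=w^1\wedge\cdots\wedge w^n$. Applying the displayed factorization $e^{-i\tau\hat h}=e^{i\tau(B(u,Y)-h)}e^{\tau X_h}\otimes e^{\tau\mathcal L_{X_h}}$, the line-bundle factor $e^{\tau X_h}$ turns $\rho(x)_{ab}$ into its $J^\tau$-analytic continuation $\rho(xe^{\tau u})_{ab}$ by Theorem \ref{analyticcont}; the scalar $e^{i\tau(B(u,Y)-h)}$ promotes $\beta_0$ to $\beta_\tau$, since $\beta_\tau=\beta_0\,e^{i\tau(B(u,Y)-h)}$; and the half-form factor $e^{\tau\mathcal L_{X_h}}$ sends $\sqrt{\Omega_0}$ to $\sqrt{\Omega_\tau}$ by Theorem \ref{thmomegatau}(2). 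Hence $e^{-i\tau\hat h}$ carries $\rho(x)_{ab}\beta_0\otimes\sqrt{\Omega_0}$ exactly to the generating section $\rho(xe^{\tau u})_{ab}\beta_\tau\otimes\sqrt{\Omega_\tau}$ of $\mathcal H_\tau$ appearing in \eqref{wave}. Theorem \ref{tautwopositive} guarantees these images are square-integrable, so $e^{-i\tau\hat h}$ is a well-defined, densely defined linear map $\mathcal H_0\to\mathcal H_\tau$.

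The intertwining then follows by Peter--Weyl bookkeeping. Since left and right translations act on matrix elements by $\rho(gxg')_{ab}=\sum_{c,d}\rho(g)_{ac}\rho(x)_{cd}\rho(g')_{db}$, and $e^{-i\tau\hat h}$ acts diagonally in the matrix indices (replacing $x$ by $xe^{\tau u}$ without touching the labels), one obtains $e^{-i\tau\hat h}U_{(g,g')}=U_{(g,g')}e^{-i\tau\hat h}$ on the generators, using that $\beta_\tau$ is $Ad_K$-invariant and that $\sqrt{\Omega_\tau}$ is $K\times K$-invariant (Lemma \ref{preserveshalf}, Theorem \ref{preserves}); here $U_{(g,g')}$ denotes the action on whichever of $\mathcal H_0,\mathcal H_\tau$ is relevant. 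Equivalently, I could argue structurally: the vanishing Poisson brackets $\{h,y^j\}=\{h,\tilde y^j\}=0$ of Lemma \ref{preserveshalf} force $e^{-i\tau\hat h}$ to commute with the generators $\hat y^j,\hat{\tilde y}^j$ of the $K\times K$ action identified in Theorem \ref{preserves}.

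Finally, for $\tau=t\in\mathbb R$ the operator $e^{-it\hat h}$ is the metaplectic lift of the genuine Hamiltonian flow $\Phi^h_t$, a symplectomorphism carrying $\mathcal P^0$ to $\mathcal P^t$, so unitarity is to be expected; the actual proof is immediate from Proposition \ref{real}, which shows that the inner product of two generators is independent of $\tau_1$ and converges, as $\tau_2\to0$, to the $L^2(K)$ inner product of the corresponding matrix elements. Since $e^{-it\hat h}$ sends the orthogonal generators $\rho(x)_{ab}\beta_0\otimes\sqrt{\Omega_0}$ of $\mathcal H_0$ to the generators $\rho(xe^{tu})_{ab}\beta_t\otimes\sqrt{\Omega_t}$ of $\mathcal H_t$ with matching norms and inner products, it is an isometry with dense range, hence extends to a unitary, and its inverse $e^{it\hat h}$ is then unitary as well. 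The main subtlety I expect is exactly this last point: one must be sure that the normalization constants hidden in $\beta_\tau$ and in the measure $d\mu_\tau$ compensate precisely, so that the boundary inner product of Definition \ref{boundaryhilberts} matches the source inner product on $\mathcal H_0$. That analytic content is, however, already supplied by Proposition \ref{real}, so the remaining work is purely to assemble these pieces.
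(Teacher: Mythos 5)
Your proposal is correct and follows essentially the same route as the paper: well-definedness via the explicit action on Peter--Weyl generators (Theorem \ref{analyticcont}, Theorem \ref{thmomegatau}(2), Theorem \ref{tautwopositive}), intertwining via the commutation of $\hat{h}$ with the generators $\hat{y}^{j},\hat{\tilde{y}}^{j}$ of the $K\times K$ action (Lemma \ref{preserveshalf}, Theorem \ref{preserves}), and real-time unitarity by matching orthonormal bases against the boundary inner product of Definition \ref{boundaryhilberts}. Your primary intertwining argument (direct Peter--Weyl index bookkeeping) is a harmless variant, and the ``structural'' alternative you sketch is precisely the paper's own argument.
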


\begin{proof}
From Theorem \ref{analyticcont}, Corollary \ref{realanalyticcont}, Definitions \ref{hilberts} and \ref{boundaryhilberts} and Theorem \ref{thmomegatau}(2), it is immediate to see that $e^{-i\tau\hat{h}}$ is a densely defined linear map from $\mathcal{H}_{0}$ to $\mathcal{H}_{\tau}$. From Lemma \ref{preserveshalf} and from the fact that the operators $\hat{y}^{j},\hat{\tilde{y}}^{j}$ annihilate $\beta_{\tau}(Y)$, we see that $\hat{h}$ commutes with these operators which, from Theorem \ref{preserves}, generate the $K\times K$ action. Moreover, from Definition \ref{boundaryhilberts} and from the explicit action of $\hat{h}$, following from Corollary \ref{realanalyticcont}, it is immediate to check that for real $t$, $e^{it\hat{h}}$ is unitary.
\end{proof}

\bigskip{}
We can summarize Theorem \ref{evolu}, Theorem \ref{analyticcont}, Corollary \ref{realanalyticcont}, and Theorem \ref{thmomegatau}(2) in the following.

\begin{prop}
The densely defined operator $e^{-i\tau\hat{h}}:\mathcal{H}_{0}\to\mathcal{H}_{\tau},\ \tau\in\mathbb{C}^{+}$ is the analytic continuation of $\mathcal{P}^{0}$-polarized sections of the half-form corrected prequantum bundle, i.e.
\[
e^{-i\tau\hat{h}}(f(x)\otimes\sqrt{\Omega_{0}})=f(xe^{\tau})\beta_{\tau}(Y)\otimes\sqrt{\Omega_{\tau}}.
\]
Moreover the expression on the left-hand side converges as an infinite power series in $\tau$.
\end{prop}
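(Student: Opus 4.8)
The plan is to treat this proposition as a bookkeeping statement that assembles the factorized form of $e^{-i\tau\hat{h}}$ with the analytic-continuation results already proved, so the argument will be short. The starting point is the factorization recorded just before Theorem~\ref{evolu},
\[
e^{-i\tau\hat{h}}=e^{i\tau(B(u(Y),Y)-h(Y))}\,e^{\tau X_{h}}\otimes e^{\tau\mathcal{L}_{X_{h}}},
\]
whose three factors mutually commute: $e^{i\tau(B(u,Y)-h)}$ is multiplication by a function of $Y$ alone, $e^{\tau X_{h}}$ differentiates only in the group variable $x$ (and hence commutes with that multiplication), while $e^{\tau\mathcal{L}_{X_{h}}}$ acts only on the half-form factor. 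First I would apply the operator to a vertically polarized section, written in the full form of \eqref{wave} at $\tau=0$, namely $f(x)\beta_{0}(Y)\otimes\sqrt{\Omega_{0}}$ with $\beta_{0}\equiv\pi^{-n/4}$, and evaluate each factor in turn.

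For the prequantum-line-bundle factor, Theorem~\ref{analyticcont} (for $\tau\in\mathbb{C}^{+}$) together with Corollary~\ref{realanalyticcont} (for $\tau=t\in\mathbb{R}$) gives $e^{\tau X_{h}}f(x)=f(xe^{\tau u})$, the $J^{\tau}$-analytic continuation of $f$. For the half-form factor, Theorem~\ref{thmomegatau}(2) gives $e^{\tau\mathcal{L}_{X_{h}}}\Omega_{0}=\Omega_{\tau}$; since $\Omega_{\tau}$ is nowhere vanishing and analytic in $\tau$, the branch of the square root continuing $\sqrt{\Omega_{0}}$ is unambiguous and yields $e^{\tau\mathcal{L}_{X_{h}}}\sqrt{\Omega_{0}}=\sqrt{\Omega_{\tau}}$. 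Finally, recalling $\beta_{\tau}(Y)=\pi^{-n/4}e^{i\tau(B(u(Y),Y)-h(Y))}$, the scalar prefactor multiplied by $\beta_{0}=\pi^{-n/4}$ is exactly $\beta_{\tau}(Y)$. Assembling the three commuting factors then produces $f(xe^{\tau u})\beta_{\tau}(Y)\otimes\sqrt{\Omega_{\tau}}$, which is the claimed identity (the abbreviated left-hand side $f(x)\otimes\sqrt{\Omega_{0}}$ in the statement absorbs the fixed constant $\beta_{0}=\pi^{-n/4}$ into the identification $\mathcal{H}_{0}\cong L^{2}(K)$).

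For the convergence assertion I would note that each of the three exponential factors converges as a power series in $\tau$ by exactly the results just used: Theorem~\ref{analyticcont}/Corollary~\ref{realanalyticcont} for $e^{\tau X_{h}}f$, Theorem~\ref{thmomegatau}(2) (with the analyticity of $\Omega_{\tau}$ noted there) for the half-form factor, and entirety of the exponential for the scalar prefactor. Writing $\hat{h}=A\otimes 1+1\otimes \mathcal{L}_{X_{h}}$ with $[A\otimes 1,1\otimes\mathcal{L}_{X_{h}}]=0$, the formal series $\sum_{k\geq0}\frac{(-i\tau\hat{h})^{k}}{k!}$ factors into the product of these three convergent series and hence converges on the dense span of matrix elements of irreducible representations. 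The one place calling for genuine care --- and the closest thing to an obstacle --- is purely a matter of conventions: fixing the half-form branch so that $e^{\tau\mathcal{L}_{X_{h}}}\sqrt{\Omega_{0}}=\sqrt{\Omega_{\tau}}$ is consistent, and tracking the constant $\pi^{-n/4}$ relating the scalar prefactor to $\beta_{\tau}$; both are settled once the normalizations of Definition~\ref{hilberts} are spelled out.
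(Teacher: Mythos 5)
Your proof is correct and takes essentially the same route as the paper, which offers no standalone argument but explicitly presents the proposition as a summary of Theorem \ref{evolu}, Theorem \ref{analyticcont}, Corollary \ref{realanalyticcont}, and Theorem \ref{thmomegatau}(2) --- precisely the assembly you carry out via the factorization $e^{-i\tau\hat{h}}=e^{i\tau(B(u(Y),Y)-h(Y))}\,e^{\tau X_{h}}\otimes e^{\tau\mathcal{L}_{X_{h}}}$. Your added care about the half-form branch, the $\pi^{-n/4}$ normalization, and the implicit correction of $f(xe^{\tau})$ to $f(xe^{\tau u})$ only makes explicit what the paper leaves tacit.
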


It is clear that $e^{-i\tau\hat{h}}$ is not a unitary operator. In particular, it intertwines the self-adjoint operator of multiplication by a (non-constant) real function $f\in C^{0}(K)$ on $\mathcal{H}_{0}$ with the non-self-adjoint operator of multiplication by the analytic continuation of $f$ on $\mathcal{H}_{\tau}$. Given the explicit structure of the Hilbert spaces $\mathcal{H}_{\tau}$, however, it is easy to correct this lack of unitarity while keeping the intertwining properties for the $K\times K$ actions. We see that both $\mathcal{H}_{0}$ and $\mathcal{H}_{\tau}$ decompose as infinite direct sums of irreducible representations of $\mathfrak{k}\oplus\mathfrak{k}$,
\begin{equation}
\mathcal{H}_{\tau}=\overline{\oplus_{\rho\in\hat{K}}V_{\rho,\rho}^{\tau}},\label{decomp}
\end{equation}
where $V_{\rho,\rho}^{\tau}=\{Tr(A\rho(xe^{\tau u}))\beta_{\tau}(Y)\otimes\sqrt{\Omega_{\tau}},\ A\in\mathrm{End}V_{\rho},\ \rho\in\hat{K}\}$.

\begin{prop}
\label{blockdiag} The operator $e^{-i\tau\hat{h}}$ preserves this decomposition. 
\end{prop}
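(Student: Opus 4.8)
The plan is to treat this as an immediate corollary of the explicit action of $e^{-i\tau\hat{h}}$ recorded in the preceding proposition, combined with the analytic continuation formula of Theorem \ref{analyticcont}. Since the summand $V^0_{\rho,\rho}$ is spanned by the sections $Tr(A\rho(x))\otimes\sqrt{\Omega_0}$ with $A\in\mathrm{End}(V_\rho)$, and these span a dense domain on which $e^{-i\tau\hat{h}}$ is defined, it suffices to verify that each such generator is carried into $V^\tau_{\rho,\rho}$, i.e.\ into the summand labelled by the \emph{same} representation $\rho$.

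First I would apply the factorization $e^{-i\tau\hat{h}}=e^{i\tau(B(u(Y),Y)-h(Y))}e^{\tau X_{h}}\otimes e^{\tau\mathcal{L}_{X_{h}}}$ to a generator $Tr(A\rho(x))\otimes\sqrt{\Omega_0}$. By Theorem \ref{analyticcont}, the prequantum operator $e^{\tau X_{h}}$ sends the matrix element $Tr(A\rho(x))$ to $Tr(A\rho(xe^{\tau u}))$, where $\rho(xe^{\tau u})$ is the holomorphic extension to $K_{\mathbb{C}}$ of the single representation $\rho$; the essential point is that this analytic continuation is built from $\rho$ alone and therefore introduces no other representations. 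The scalar factor $e^{i\tau(B(u(Y),Y)-h(Y))}$ produces $\beta_\tau(Y)$ (up to the overall normalization constant, as recorded in the preceding proposition), while Theorem \ref{thmomegatau}(2) gives $e^{\tau\mathcal{L}_{X_{h}}}\sqrt{\Omega_0}=\sqrt{\Omega_\tau}$. Hence
\[
e^{-i\tau\hat{h}}\left(Tr(A\rho(x))\otimes\sqrt{\Omega_0}\right)=Tr\!\left(A\rho(xe^{\tau u})\right)\beta_\tau(Y)\otimes\sqrt{\Omega_\tau},
\]
which is by Definition \ref{hilberts} exactly a generator of $V^\tau_{\rho,\rho}$.

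Since $A\in\mathrm{End}(V_\rho)$ was arbitrary, this shows $e^{-i\tau\hat{h}}(V^0_{\rho,\rho})\subseteq V^\tau_{\rho,\rho}$ for every $\rho\in\hat{K}$, which is precisely the statement that the decomposition \eqref{decomp} is preserved. The only point needing a word of care is that $e^{-i\tau\hat{h}}$ is merely densely defined, so the claim is to be read on the algebraic direct sum of the blocks prior to completion; as the operator carries the dense generating set of each block into the matching block, no off-diagonal terms in the representation index can appear. I anticipate no genuine obstacle here, since the content of the proposition reduces to the elementary observation that the analytic continuation of a matrix element of $\rho$ is again assembled from $\rho$, convergence being already guaranteed by Theorem \ref{analyticcont}.
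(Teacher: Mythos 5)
Your proof is correct, but it follows a genuinely different (more computational) route than the paper's. The paper disposes of this proposition in one line, citing Theorem \ref{evolu}: since $e^{-i\tau\hat{h}}$ intertwines the $K\times K$ actions on $\mathcal{H}_{0}$ and $\mathcal{H}_{\tau}$, and since the blocks $V_{\rho,\rho}^{\tau}$ are irreducible $K\times K$-subrepresentations which are pairwise inequivalent for distinct $\rho\in\hat{K}$, a Schur-type argument forces any intertwiner to carry $V_{\rho,\rho}^{0}$ into $V_{\rho,\rho}^{\tau}$ --- no explicit formula is needed. You instead verify block-preservation by hand: you apply the factorization $e^{-i\tau\hat{h}}=e^{i\tau(B(u(Y),Y)-h(Y))}e^{\tau X_{h}}\otimes e^{\tau\mathcal{L}_{X_{h}}}$ to each generator $Tr(A\rho(x))\otimes\sqrt{\Omega_{0}}$, use Theorem \ref{analyticcont} to identify $e^{\tau X_{h}}Tr(A\rho(x))=Tr(A\rho(xe^{\tau u}))$ (built from the single representation $\rho$), and Theorem \ref{thmomegatau}(2) for the half-form factor, landing exactly in $V_{\rho,\rho}^{\tau}$. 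Both arguments are sound and rest on machinery already established in the paper; the structural argument is softer and would apply verbatim to \emph{any} densely defined $K\times K$-intertwiner, while your computation buys the explicit image formula $e^{-i\tau\hat{h}}\left(Tr(A\rho(x))\otimes\sqrt{\Omega_{0}}\right)=Tr(A\rho(xe^{\tau u}))\beta_{\tau}(Y)\otimes\sqrt{\Omega_{\tau}}$, which is precisely what is needed later to conclude $U_{\tau}(\rho_{ab}^{0})=\rho_{ab}^{\tau}$. Your remark that the claim should be read on the algebraic direct sum of the blocks (the dense domain of $e^{-i\tau\hat{h}}$) is an appropriate precaution that the paper leaves implicit.
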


\begin{proof}
This is an immediate consequence of Theorem \ref{evolu}.
\end{proof}

\bigskip
We can correct the nonunitarity of $e^{-i\tau\hat{h}}$ while preserving the $K\times K$ action by defining $E(\tau,h):\mathcal{H}_{0}\to\mathcal{H}_{0}$ such that the operators
\begin{equation}
U_{\tau}=e^{-i\tau\hat{h}}\circ E(\tau,h),\label{gencst}
\end{equation}
are unitary. In fact, preserving the $K\times K$ action implies that $E(\tau,h)$ is diagonal with respect with the decomposition (\ref{decomp}) and $E(\tau,h)|_{V_{\rho,\rho}^{\tau}}=\lambda_{\rho}id_{V_{\rho,\rho}^{\tau}}$, for some nonzero constants $\{\lambda_{\rho}\}_{\rho\in\hat{K}}$.

\begin{prop}
\label{admissible} The basis $\{\rho(xe^{\tau u})_{ab}\beta_{\tau}(Y)\otimes\sqrt{\Omega_{\tau}}\}_{\rho\in\hat{K},a,b=1,\dots,\dim\rho}$ is an orthogonal basis of $\mathcal{H}_{\tau}$. Moreover, the norms
\begin{equation}
||\rho(xe^{\tau u})_{ab}\beta_{\tau}(Y)\otimes\sqrt{\Omega_{\tau}}||_{\mathcal{H}_{\tau}}=:a_{\rho}(\tau)
\end{equation}
are independent of $a,b$. Note that for real $\tau$, $a_{\rho}(\tau)=\sqrt{\dim\rho}^{-1}.$ 
\end{prop}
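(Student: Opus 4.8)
The plan is to compute the inner product of two of the proposed basis sections directly and show that it is proportional to $\delta_{\rho\rho'}\delta_{ik}\delta_{jl}$ with a factor depending only on $\rho$ and $\tau$. Before any computation, I would observe that the basis claim reduces to orthogonality plus non-vanishing of norms: by Definition \ref{hilberts} the space $\mathcal{H}_\tau$ is the completion of the span of the sections $Tr(A\rho(xe^{\tau u}))\beta_\tau\otimes\sqrt{\Omega_\tau}$, and since $Tr(A\rho)=\sum_{a,b}A_{ba}\rho_{ab}$, the matrix-element sections $\rho(xe^{\tau u})_{ab}\beta_\tau\otimes\sqrt{\Omega_\tau}$ span the very same dense subspace. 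Hence, once they are shown to be mutually orthogonal with nonzero norm, they constitute an orthogonal basis by definition.

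First I would factor the measure: by \eqref{eq:dmu} together with Lemma \ref{adinvmeasure}, one has $d\mu_\tau=dx\,d\nu_\tau(Y)$, where $dx$ is Haar measure on $K$ and $d\nu_\tau$ is an $Ad_K$-invariant measure on the fiber $\mathfrak{k}$. Writing $\rho(xe^{\tau u})=\rho(x)\rho(e^{\tau u})$ and integrating over $K$ via the Weyl orthogonality relations $\int_K\overline{\rho(x)_{ia}}\rho'(x)_{kb}\,dx=(\dim\rho)^{-1}\delta_{\rho\rho'}\delta_{ik}\delta_{ab}$ — exactly the manipulation already performed in the proof of Proposition \ref{real}(1), where unitarity of $\rho$ on $K$ yields $\sum_a\overline{\rho(e^{\tau u})_{aj}}\rho(e^{\tau u})_{al}=\rho(e^{2i\tau_2 u})_{jl}$ — reduces the inner product to
\[
\frac{\delta_{\rho\rho'}\delta_{ik}}{\dim\rho}\int_{\mathfrak{k}}\rho(e^{2i\tau_2 u(Y)})_{jl}\,d\nu_\tau(Y).
\]
This already establishes orthogonality in $\rho,\rho'$ and in the row index $i$.

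The key remaining step, which I expect to be the main obstacle, is to show that the matrix $M(\tau):=\int_{\mathfrak{k}}\rho(e^{2i\tau_2 u(Y)})\,d\nu_\tau(Y)$ is a scalar multiple of the identity. For this I would invoke the equivariance $u(Ad_g Y)=Ad_g u(Y)$ of Lemma \ref{properties}(3), which gives $\rho(e^{2i\tau_2 u(Ad_g Y)})=\rho(g)\rho(e^{2i\tau_2 u(Y)})\rho(g)^{-1}$, together with the $Ad_K$-invariance of $d\nu_\tau$. Substituting $Y\mapsto Ad_g Y$ then yields $M(\tau)=\rho(g)M(\tau)\rho(g)^{-1}$, i.e. $M(\tau)$ commutes with every $\rho(g)$; since $\rho$ is irreducible, Schur's lemma forces $M(\tau)=c_\rho(\tau)\,\mathbf{1}$. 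The inner product therefore equals $(\dim\rho)^{-1}c_\rho(\tau)\,\delta_{\rho\rho'}\delta_{ik}\delta_{jl}$, which supplies orthogonality in the column index and shows the common squared norm $a_\rho(\tau)^2=c_\rho(\tau)/\dim\rho$ is independent of $a,b$; finiteness of $c_\rho(\tau)$ is guaranteed by the $L^2$-integrability proved in Theorem \ref{tautwopositive}. For real $\tau=t$ the value $a_\rho(t)^2=1/\dim\rho$, hence $a_\rho(t)=(\dim\rho)^{-1/2}$, is read off directly from the boundary inner product in Definition \ref{boundaryhilberts}. Apart from the Schur-type argument just described, every ingredient is either the reduction already carried out in Proposition \ref{real} or a bookkeeping consequence of the definition of $\mathcal{H}_\tau$.
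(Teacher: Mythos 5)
Your proof is correct, but it follows a genuinely different route from the paper's. The paper argues structurally: each $V_{\rho,\rho}^{\tau}$ in the decomposition (\ref{decomp}) is an irreducible representation of the compact group $K\times K$, the action of $K\times K$ on $\mathcal{H}_{\tau}$ is unitary (Proposition \ref{unitary}), and $e^{-i\tau\hat{h}}$ intertwines the $K\times K$ actions (Theorem \ref{evolu}); since an invariant inner product on an irreducible representation is unique up to scale, the $d\mu_{\tau}$-inner product on each $V_{\rho,\rho}^{\tau}$ must be a positive multiple of the push-forward under $e^{-i\tau\hat{h}}$ of the $\mathcal{H}_{0}$-inner product, so orthogonality of the matrix elements and the $a,b$-independence of their norms are inherited directly from the corresponding (classical Schur--Weyl) statement in $L^{2}(K,dx)$. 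You instead compute: after factoring $d\mu_{\tau}=dx\,d\nu_{\tau}(Y)$ via \eqref{eq:dmu} and Lemma \ref{adinvmeasure}, you perform the $K$-integration by Weyl orthogonality exactly as in the proof of Proposition \ref{real}(1), and then apply Schur's lemma to the fiber-integral matrix $M(\tau)=\int_{\mathfrak{k}}\rho(e^{2i\tau_{2}u(Y)})\,d\nu_{\tau}(Y)$, using the equivariance $u(Ad_{g}Y)=Ad_{g}u(Y)$ of Lemma \ref{properties}(3) and the $Ad_{K}$-invariance of $d\nu_{\tau}$ to show $M(\tau)$ commutes with $\rho(K)$. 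Both proofs ultimately rest on the same representation-theoretic principle, but applied at different levels: the paper to invariant inner products on irreducibles, you to a single commuting matrix. The paper's version is shorter given its earlier machinery (it needs Theorem \ref{evolu} and the irreducibility of $V_{\rho,\rho}^{\tau}$ under $K\times K$, which your argument bypasses entirely), while your version is self-contained and yields the explicit formula $a_{\rho}(\tau)^{2}=c_{\rho}(\tau)/\dim\rho$ with $c_{\rho}(\tau)$ a concrete fiber integral---precisely the kind of expression the paper later evaluates in its $S^{1}$ nontransitivity example. Your handling of the boundary case is also the right one: for $\tau=t\in\mathbb{R}$ the claim is true by fiat from Definition \ref{boundaryhilberts}, and your integral argument is only needed (and only makes sense) for $\tau\in\mathbb{C}^{+}$, where square-integrability from Theorem \ref{tautwopositive} justifies the Fubini step.
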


\begin{rem}
Note that $a_{\rho}(\tau)$ actually depends only on $\tau_{2}$, in view of Proposition \ref{real}.
\end{rem} 

\begin{proof}
Each of the vector spaces $V_{\rho,\rho}^{\tau}\subset\mathcal{H}_{\tau},\ \rho\in\hat{K}$, forms an irreducible representation for the compact group $K\times K$. Hence, on each $V_{\rho,\rho}^{\tau}$ there is a unique-up-to-scale $K\times K$ invariant inner product. Therefore the inner product on $\mathcal{H}_{\tau}$ defined by $d\mu_{\tau}$ must be equal up to scale to the inner product on the vertically polarized Hilbert space $\mathcal{H}_{0}$, given in Definition \ref{boundaryhilberts}, pushed forward by the $K\times K$ intertwining isomorphism $e^{-i\tau\hat{h}}$.
\end{proof}

\bigskip
For $\tau\in\mathbb{C}^{+}\cup\mathbb{R}$, consider the orthonormal basis $\{\rho_{ab}^{\tau},\rho\in\hat{K},a,,b=1,\dots,\dim\pi\}$ for $\mathcal{H}_{\tau}$, where
\[
\rho_{ab}^{\tau}=a_{\rho}(\tau)^{-1}\rho(xe^{\tau u})_{ab}\beta_{\tau}(Y)\otimes\sqrt{\Omega_{\tau}}.
\]

To fix phase ambiguities, we will define the operators $E(\tau,h)$ by setting $\lambda_{\rho}=\frac{a_{\rho}(0)}{a_{\rho}(\tau)},\ \rho\in\hat{K}$. 

\begin{defn}
The generalized $h$-CST associated to the function $h$ is the unitary transform $U_{\tau}$ in (\ref{gencst}). 
\end{defn}

\begin{thm}
The generalized CST, $U_{\tau}$, is a unitary isomorphism between $\mathcal{H}_{0}$ and $\mathcal{H}_{_{\tau}}$, with
\[
U_{\tau}(\rho_{ab}^{0})=\rho_{ab}^{\tau},\ \rho\in\hat{K}.
\]
\end{thm}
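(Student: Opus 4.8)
The plan is to prove the explicit formula $U_\tau(\rho_{ab}^0)=\rho_{ab}^\tau$ by a direct two-step computation, and then to deduce unitarity from the fact that $U_\tau$ carries one orthonormal basis bijectively onto another. First I would unwind the definition $U_\tau = e^{-i\tau\hat h}\circ E(\tau,h)$ of (\ref{gencst}) on the basis vector $\rho_{ab}^0 = a_\rho(0)^{-1}\rho(x)_{ab}\beta_0(Y)\otimes\sqrt{\Omega_0}$ of $\mathcal{H}_0$. By construction $E(\tau,h)$ is diagonal with respect to the $K\times K$-isotypic decomposition (\ref{decomp}), acting on the block $V_{\rho,\rho}^0$ as the scalar $\lambda_\rho = a_\rho(0)/a_\rho(\tau)$; applying it to $\rho_{ab}^0$ therefore only rescales it, producing $a_\rho(\tau)^{-1}\rho(x)_{ab}\beta_0(Y)\otimes\sqrt{\Omega_0}$.

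Next I would apply $e^{-i\tau\hat h}$ using the explicit analytic-continuation formula recorded in the Proposition following Theorem \ref{evolu}, namely $e^{-i\tau\hat h}\big(f(x)\otimes\sqrt{\Omega_0}\big) = f(xe^{\tau u})\beta_\tau(Y)\otimes\sqrt{\Omega_\tau}$, valid for any $f\in C^\infty(K)$ admitting the $J^\tau$-analytic continuation of Theorem \ref{analyticcont}. Taking $f$ to be the matrix coefficient $\rho(\cdot)_{ab}$ scaled by $a_\rho(\tau)^{-1}$, one obtains exactly $a_\rho(\tau)^{-1}\rho(xe^{\tau u})_{ab}\beta_\tau(Y)\otimes\sqrt{\Omega_\tau} = \rho_{ab}^\tau$, which is the asserted identity. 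The only genuine bookkeeping here is to track the normalization constants $a_\rho(0)$ and $a_\rho(\tau)$ and confirm that they combine correctly; this is automatic, since $E(\tau,h)$ was defined precisely to cancel the $\tau=0$ normalization and install the correct $\tau$-normalization on each block, and the block-diagonality of Proposition \ref{blockdiag} guarantees that prescribing a single scalar $\lambda_\rho$ per block yields a well-defined operator.

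For unitarity I would invoke Proposition \ref{admissible}, which establishes that the sections $\rho(xe^{\tau u})_{ab}\beta_\tau(Y)\otimes\sqrt{\Omega_\tau}$ form an orthogonal family with $a,b$-independent norms $a_\rho(\tau)$; dividing by these norms gives the orthonormal basis $\{\rho_{ab}^\tau\}$ of $\mathcal{H}_\tau$. The same statement at $\tau=0$ (where $a_\rho(0)=(\dim\rho)^{-1/2}$, consistent with the inner products computed in Proposition \ref{real}(3)) shows that $\{\rho_{ab}^0\}$ is an orthonormal basis of $\mathcal{H}_0$. Since a densely defined linear map carrying one orthonormal basis bijectively onto another extends uniquely to a unitary isomorphism of the completed Hilbert spaces, the formula just proved shows that $U_\tau:\mathcal{H}_0\to\mathcal{H}_\tau$ is unitary.

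I do not anticipate any real obstacle at this point: Propositions \ref{real}, \ref{blockdiag} and \ref{admissible}, together with the explicit form of $e^{-i\tau\hat h}$ and the definition of $E(\tau,h)$, have already done all of the analytic and representation-theoretic work, so the theorem is essentially a matter of assembling these pieces. If anything requires care, it is confirming that the normalization scalars compose to unit length on every block and that the intertwining of the $K\times K$ actions (Theorem \ref{evolu}) together with Proposition \ref{blockdiag} legitimately reduces the problem to this blockwise rescaling.
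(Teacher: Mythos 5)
Your proposal is correct and follows essentially the same route as the paper, whose proof simply cites Theorem \ref{evolu}, Proposition \ref{blockdiag} and Proposition \ref{admissible}; you have merely spelled out the blockwise rescaling by $\lambda_\rho=a_\rho(0)/a_\rho(\tau)$, the explicit analytic-continuation action of $e^{-i\tau\hat h}$ on $f(x)\otimes\sqrt{\Omega_0}$, and the standard fact that a map sending one orthonormal basis onto another extends to a unitary isomorphism. The bookkeeping you flag (cancellation of the normalization constants and well-definedness of $E(\tau,h)$ per block) is exactly what the paper leaves implicit, so no gap remains.
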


\begin{proof}
The results follows directly from Propositions \ref{evolu}, \ref{blockdiag} and \ref{admissible}. 
\end{proof}

\begin{rem}
Note, that $a_{\rho}(\tau)$, and therefore $E(\tau,h)$, depends only on $\tau_{2}$. Moreover, the unitarity of $U_{\tau}$, and the property that it interwines the $K\times K$ actions, fixes the operator $E(\tau,h)$ uniquely up to (unitary) phase factors which may be chosen arbitrarily for each block of the decomposition $\mathcal{H}_{0}=\overline{\oplus_{\rho\in\hat{K}}V_{\rho,\rho}^{0}}$.
\end{rem}

\begin{rem}\label{remarkably}
Remarkably, as shown in \cite{Hall02,Florentino-Matias-Mourao-Nunes05,Florentino-Matias-Mourao-Nunes06} for $\tau=it,\ t>0$ and $h=\frac{1}{2}|Y|^{2}$ the quadratic energy function, one has
\[
E(it,h)=e^{-tQ(h)}
\]
where $Q(h)$ is the $t$-independent operator
\[
Q(h)=-\frac{1}{2}\Delta+\frac{|\rho|}{2}^{2},
\]
and $\Delta$ is the (negative-defined) Laplacian on $K$. In this case, $U_{it}$ is Hall's CST. Thus, for $h(Y)=\frac12 |Y|^2$, and $\tau=it$, we see that Hall's CST can be written in the form
\begin{equation}
C_{t}=U_{t}=e^{t\hat{h}}\circ e^{-t(-\frac{\Delta}{2}+\frac{|\rho|^{2}}{2})}.\label{cstdecomp}
\end{equation}
Note that the operator $Q(h)$ is given by the Schr\"odinger quantization of the Hamiltonian function $h$. Here, we can clearly identify the operator $e^{t\hat{h}}$ as responsible for the ``analytic continuation'' part of the CST, after application of the heat kernel semigroup. In this example, the fact that
\[
E(it,h)^{-1}\frac{d}{dt}E(it,h)=itQ(h)
\]
for a fixed operator $Q(h)$, is a consequence of the fact that the function $a_{\rho}(\tau)$ giving the norms of the polarized states $Tr(A\rho(xe^{\tau u}))\beta_{\tau}(Y)\otimes\sqrt{\Omega_{\tau}}$ is as an exponential of $\tau_{2}$ times a constant \cite{Hall02,Florentino-Matias-Mourao-Nunes06}, so that $E(\tau,h)\circ E(\tau^{\prime},h)=E(\tau+\tau^{\prime},h)$. In general, however, i.e. for nonquadratic complexifier $h$, this property will not be present, as we will show in the next section. In particular, one loses the semigroup property.
\end{rem}

\begin{rem}
We note that analogous ``generalized CST's'', associated to an $Ad_{K}$-invariant measure on $T^{*}K$ such that the analytic continuations of matrix elements of irreducible representations of $K$ are square integrable, have appeared in Hall's original paper \cite{Hall94}. Here, we are giving a geometric quantization incarnation to a large family of such transforms. 
\end{rem} 

The family of quantizations described above and the ``decomposition'' of Hall's CST in (\ref{cstdecomp}) suggests a path for addressing the issue raised in \cite{hall00} of finding the Stone-von Neumann explanation for the CST. Recall that the Stone-von Neumann theorem was generalized by Mackey \cite{mackey49}, as follows. A covariant pair $(R,\gamma)$ of representations of $K$ and $C^{0}(K)$ on a Hilbert space $\mathcal{H}$ is a unitary representation of $K$ on $\mathcal{H}$, together with a $*$-representation of $C^{0}(K)$, $\gamma$, such that $R(x)\gamma(f)R(x^{-1})=\gamma(x\cdot f)$, where $x\cdot f(x^{\prime})=f(x^{-1}x^{\prime}),\ x\in K,\ f\in C^{0}(K)$. Mackey's theorem states that any covariant pair of representations of $K$ and $C^0(K)$ on a Hilbert space is unitarily equivalent to a countable direct sum of the standard representations on $L^{2}(K)$ \cite{mackey49,rosenberg04}. In the standard representation, functions in $C^{0}(K)$ act on $L^{2}(K)$ by pointwise multiplication and $x\in K$ acts by left translating the $L^{2}$ function's argument by $x^{-1}$. Thus, the Hilbert space for the standard covariant pair is just $\mathcal{H}_{0}\cong L^{2}(K,dx)$ with the action of $K\times K$ and multiplication by functions.

For each $\tau\in\mathbb{C}^{+}\cup\mathbb{R}$, the action of the subgroup $K\times\{1_{K}\}\subset K\times K$ on the Hilbert spaces $\mathcal{H}_{\tau}$ gives part of a covariant pair. Moreover, these actions are intertwined by the unitary operators $U_{\tau}.$ Of course, the standard action of $C^{0}(K)$ on $\mathcal{H}_{0}$ can also be conjugated by the operators $U_{\tau}$ to define a covariant pair on $\mathcal{H}_{\tau}$. In this way, the problem of giving a Stone-von Neumann explanation for Hall's CST translates into interpreting the appearance of the Schr\"odinger operator $Q(h)$ in Remark \ref{remarkably} in terms of geometric quantization. For the generalized $h$-CSTs a similar interpretation is needed for the operators $E(\tau,h).$ Recall that in the case of flat space and of abelian varieties, the existence of unitary BKS pairing maps between Hilbert spaces for quantizations in different polarizations is explained by a Stone-von Neumann theorem. (For abelian varieties the relevant finite Heisenberg group is a finite analog of the standard covariant pair for $S^{1}$.) We will address these issues in \cite{Kirwin-Mourao-Nunes}.

\section{Nontransitivity for general complexifiers $h$}

In general, the operator $E(\tau,h)^{-1}\frac{d}{d\tau_{2}}E(\tau,h)$ is not $\tau_{2}$-independent, as is the case for the usual CST associated to the complexifier $\frac{1}{2}\kappa=\frac{1}{2}|Y|^{2}$. As mentioned above, this is a consequence of the fact that the functions $a_{\rho}(\tau)$ are in general not of the form $e^{c_{\rho}\tau_{2}}$, for some constant $c_{\rho}$. Let us show this already in a simple example with $K=S^{1}.$ In this case, even though $K$ is not simple, all formulas go through with $\eta(Y)=1$.

Let $h(Y)$ be a polynomial in $Y$ with positive definite second derivative bounded away from 0. Then, $\mathcal{H}_{\tau}$ has an orthogonal basis
\[
\{e_{n}=e^{n(i\theta+\tau u)}e^{i\tau(yu-h(y))}\}_{n\in\mathbb{Z}}.
\]
The corresponding norms are given by, with $y=iw,u=is$,
\[
a_{n}(\tau)^{2}=||e_{n}||^{2}=\int_{\mathbb{R}}e^{-2n\tau_{2}s}e^{-2\tau_{2}(sw-h)}\tau_{2}^{\frac{1}{2}}\sqrt{h^{^{\prime\prime}}(w)}dw.
\]

Let us take, for example, $h(w)=\frac{w^{2}}{2}+\frac{w^{4}}{4}$, so that $s=w+w^{3}$. Then,
\[
a_{n}(\tau)^{2}=\int_{\mathbb{R}}e^{-2n\tau_{2}(w+w^{3})}e^{-\tau_{2}(w^{2}+\frac{w^{4}}{2})}\tau_{2}^{\frac{1}{2}}\sqrt{3w^{2}}dw.
\]
It is easy to verify, for example using the Laplace approximation and the large $\tau_{2}$ asymptotics of $a_{n}(\tau)^{2}$, that there is no constant $c$ such that
\[
\frac{da_{n}(\tau)}{d\tau_{2}}=ca_{n}(\tau)^{2},
\]
so that the operators $E(\tau,h)$ will not have nice transitivity properties in this case.\bigskip{}

\noindent\textbf{\large Acknowledgments:} We thank Brian Hall for discussions. This work was supported in part by the European Science Foundation (ESF) grant ``Interactions of Low-Dimensional Topology and Geometry with Mathematical Physics (ITGP)''. The last two authors were supported by the project PTDC/MAT/119689/2010.

\providecommand{\bysame}{\leavevmode\hbox to3em{\hrulefill}\thinspace}

\end{document}